\newtheorem{theorem}{Theorem}[section]
\newtheorem{lemma}[theorem]{Lemma}
\newtheorem{proposition}[theorem]{Proposition}
\newtheorem{corollary}[theorem]{Corollary}
\newtheorem{conjecture}[theorem]{Conjecture}
\newtheorem{hypothesis}[theorem]{Hypothesis}
\theoremstyle{definition}
\newenvironment{remark}[1][Remark]{\begin{trivlist}
\item[\hskip \labelsep {\bfseries #1}]}{\end{trivlist}}
\newcommand{\textcyr}[1]{{\fontencoding{OT2}\fontfamily{wncyr}\fontseries{m}\fontshape{n}\selectfont #1}}
\newcommand{\Sha}{{\mbox{\textcyr{Sh}}}}
\newcommand{\ord}{{\operatorname{ord}}}
\newcommand{\Gal}{{\operatorname{Gal}}}
\newcommand{\Heeg}{{\operatorname{Heeg}}}
\newcommand{\cd}{{\operatorname{cd}}}
\newcommand{\Norm}{{\operatorname{Norm}}}
\newcommand{\cyc}{{\operatorname{cyc}}}
\newcommand{\coker}{{\operatorname{coker}}}
\newcommand{\Pic}{{\operatorname{Pic}}}
\newcommand{\rec}{{\operatorname{rec}}}
\newcommand{\tors}{{\operatorname{tors}}}
\newcommand{\cotors}{{\operatorname{cotors}}}
\newcommand{\im}{{\operatorname{im}}}
\newcommand{\Hom}{{\operatorname{Hom}}}
\newcommand{\Sel}{{\operatorname{Sel}}}
\newcommand{\GL}{{\operatorname{GL_2}}}
\newcommand{\ns}{{\operatorname{ns}}}
\author{Jeanine Van Order}
\address{Einstein Institute of Mathematics, The Hebrew University of Jerusalem}
\email{jeaninevanorder@gmail.com}
\thanks{The author acknowledges partial support from the Swiss National Science Foundation (FNS) grant 200021-125291.}
\keywords{Iwasawa theory, abelian varieties, elliptic curves}
\subjclass{Primary 11, Secondary 11G05, 11G10, 11G40, 11R23}
\begin{document}

\title[Dihedral Euler characteristics of Selmer groups of abelian varieties]
{On the Dihedral Euler characteristics of Selmer groups of abelian varieties}

\begin{abstract} This note shows how to use the framework of Euler characteristic formulae to study Selmer groups of 
abelian varieties in certain dihedral or anticyclotomic extensions of CM fields via Iwasawa main conjectures,
and in particular how to verify the $p$-part of the refined Birch and Swinnerton-Dyer conjecture in this setting. When the 
Selmer group is cotorsion with respect to the associated Iwasawa algebra, we obtain the $p$-part of formula predicted by 
the refined Birch and Swinnerton-Dyer conjecture. When the Selmer group is not cotorsion with respect to the associated 
Iwasawa algebra, we give a conjectural description of the Euler characteristic of the cotorsion submodule, and explain 
how to deduce inequalities from the associated main conjecture divisibilities of Perrin-Riou and Howard. \end{abstract}

\maketitle
\tableofcontents

\section{Introduction}

Fix a prime number $p$. Let $G$ be a compact $p$-adic Lie group without $p$-torsion. Writing $\mathcal{O}$ to denote the ring of integers of some fixed finite 
extension of ${\bf{Q}}_p$, let \begin{align*} \Lambda(G) &= \varprojlim_{U} \mathcal{O}[G/U]\end{align*} denote the $\mathcal{O}$-Iwasawa algebra of $G$. Here, 
the projective limit runs over all open normal subgroups $U$ of $G$, and $\mathcal{O}[G/U]$ denotes the usual group ring of $G/U$ with coefficients in $\mathcal{O}$. 
Let $M$ be any discrete, cofinitely generated $\Lambda(G)$-module. If the homology groups $H_i(G,M)$ are finite for all integers $i \geq 0$, then we say that the $G$-Euler characteristic $\chi(G, M)$ of $M$ is {\it{well-defined}}, and given by the (finite) value \begin{align*} \chi(G,M) &= \prod_{i\geq0} \vert H^i(G, M)\vert^{(-1)^i}. \end{align*} If $G$ 
is topologically isomorphic to ${\bf{Z}}_p^{\delta}$ for some integer $\delta \geq 1$, then a clearer picture of this Euler characteristic $\chi(G,M)$ emerges. In particular, a classical result shows that $\chi(G,M)$ is well defined if and only if $H_0(G,M)$ is finite. Moreover, if $M$ in this setting is a pseudonull $\Lambda(G)$-module, then $\chi(G, M)=1$. Now, the structure theory of finitely generated torsion $\Lambda(G)$-modules shown in Bourbaki \cite{BB} also gives the following Iwasawa theoretic picture of the $G$-Euler 
characteristic $\chi(G,M)$. That is, suppose $M$ is a discrete, cofinitely generated $\Lambda(G)$-cotorsion module. Let $M^{\vee} = \Hom(M, {\bf{Q}}_p/{\bf{Z}}_p)$ denote 
the Pontragin dual of $M$, which has the structure of a compact  $\Lambda(G)$-module. Then, there exists a finite collection of non-zero elements $f_1, \ldots, f_r \in \Lambda(G)$,
along with a pseudonull $\Lambda(G)$-module $D^{\vee}$, such that the following sequence is exact: \begin{align*} 0 \longrightarrow \bigoplus_{i=1}^r \Lambda(G)/f_i \Lambda(G) &\longrightarrow M^{\vee}\longrightarrow D^{\vee} \longrightarrow 0. \end{align*} The product \begin{align*} \operatorname{char}(M^{\vee}) &= \operatorname{char}_{\Lambda(G)}(M^{\vee}) = \prod_{i =1}^r f_i ,\end{align*} is unique up to unit in $\Lambda(G)$, and defines the {\it{$\Lambda(G)$-characteristic power series of $M^{\vee}$}}. If the homology group $H_0(G,M)$ is finite, then the $G$-Euler characteristics $\chi(G, M)$ and $\chi(G, D)$ are both well-defined, and given by the formulae \begin{align*}\chi(G, M) &= 
\vert \operatorname{char}(M^{\vee})(0) \vert_{p}^{-1} \\ \chi(G, D)  &=1.\end{align*} Here, $\operatorname{char}(M^{\vee})(0)$ denotes the image of the characteristic power 
series $\operatorname{char}(M^{\vee})$ under the natural map $\Lambda(G) \longrightarrow {\bf{Z}}_p$, and $D$ denotes the discrete dual of $D^{\vee}$.

In this note, we show how to use the framework of Euler characteristics to study the Selmer groups of abelian varieties in certain abelian extensions of CM 
fields, and in particular to verify the $p$-part of the refined Birch and Swinnerton-Dyer conjecture via the associated Iwasawa main conjectures. To be more precise, let $A$ be a principally polarized abelian variety defined over a totally real number field $F$. We shall assume for simplicity that $A$ is principally polarized, though all of the arguments given
below can be carried through to the more general case with some extra care. Fix a prime $\mathfrak{p}$ above $p$ in $F$. Let $K$ be a totally imaginary quadratic extension of $F$. Assume that $A$ has good ordinary reduction at each prime above $p$ in $K$. Let $K_{\mathfrak{p}^{\infty}}$ denote the dihedral or anticyclotomic ${\bf{Z}}_p^{\delta}$-extension of $K$, where $\delta = [F_{\mathfrak{p}}:{\bf{Q}}_p]$ is the residue degree. This $p$-adic Lie extension, whose construction we recall below, factors through the tower 
of ring class fields of $\mathfrak{p}$-power conductor over $K$. Let $G$ denote the Galois group $\Gal(K_{\mathfrak{p}^{\infty}}/K)$, which is topologically isomorphic to 
${\bf{Z}}_p^{\delta}$. Let $\Sel(A/K_{\mathfrak{p}^{\infty}})$ denote the $p^{\infty}$-Selmer group of $A$ over $K_{\mathfrak{p}^{\infty}}$. Hence by definition, 
\begin{align*}\Sel(A/K_{\mathfrak{p}^{\infty}}) &= \varinjlim_L \Sel(A/L),\end{align*} where the inductive limit ranges over all finite extensions $L$ of $K$ contained in 
$K_{\mathfrak{p}^{\infty}}$ of the classically defined Selmer groups \begin{align*} \Sel(A/L) &= \ker \left( H^1(G_S(L), A_{p^{\infty}}) \longrightarrow \bigoplus_{v \in S} J_v(L)\right).\end{align*} Here, $S$ is any (fixed) finite set of primes of $K$ that includes both the primes above $p$ and the primes where $A$ has bad reduction. We write $K^S$ to denote the maximal Galois extension of $K$ unramified outside of $S$ and the archimedean places of $K$, and $G_S(L)$ to denote the Galois group $\Gal(K^S/L)$. We write $A_{p^{\infty}}$ to denote the collection of all $p$-power torsion of $A$ over the relevant extension of $K$ in $K^S$, i.e. $A_{p^{\infty}} = \bigcup_{n \geq 0} A_{p^n}$ with 
$A_{p^n} = \ker ([p^n]: A \longrightarrow A)$, which is standard notation. In general however, given an abelian group $B$, we shall write $B(p)$ to denote its $p$-primary component. We define \begin{align*} J_v(L) &= \bigoplus_{w \mid v} H^1(L_w, A(\overline{L}_w))(p), \end{align*} where the sum runs over primes $w$ above $v$ in $L$. Note that each Selmer group $\Sel(A/L)$ fits into the short exact descent sequence \begin{align*} 0 \longrightarrow A(L) \otimes {\bf{Q}}_p/{\bf{Z}}_p \longrightarrow \Sel(A/L) \longrightarrow 
\Sha(A/L)(p) \longrightarrow 0, \end{align*} where $A(L)$ denotes the Mordell-Weil groupof $A$ over $L$, and $\Sha(A/L)(p)$ the $p$-primary part of the Tate-Shafarevich group 
$\Sha(A/L)$ of $A$ over $L$. Now, the $p$-primary of $p^{\infty}$-Selmer group $\Sel(A/K_{\mathfrak{p}^{\infty}})$ has the structure of a discrete, cofinitely generated 
$\Lambda(G)$-module. It is often (but not always) the case that $\Sel(A/K_{\mathfrak{p}^{\infty}})$ is $\Lambda(G)$-cotorsion. For instance, this is known by a standard argument
to be the case when the $p$-primary Selmer group $\Sel(A/K)$ is finite. Let us first consider this case where $\Sel(A/K_{\mathfrak{p}^{\infty}})$ is $\Lambda(G)$-cotorsion.
This setting is generally known as the {\it{definite case}} in the literature, for reasons that we do not dwell on here.\footnote{Roughly, when $A/F$ is a modular abelian variety, 
the associated Hasse-Weil $L$-function $L(A/K,s)$ has an analytic continuation a product of Rankin-Selberg $L$-functions of cuspidal eigenforms on some totally definite quaternion algebra over $F$, and this in particular can be used to deduce that the root number of $L(A/K,s)$ is equal to $1$ (as opposed to $-1$). We refer the reader to the 
e.g. works \cite{BD}, \cite{PW}, \cite{Nek}, or \cite{VO2} for some of the many known Iwasawa theoretic results in this direction.} In any case, the conjecture of Birch and Swinnerton-Dyer predicts that the Hasse-Weil $L$-function $L(A/K, s)$ has an analytic continuation to $s=1$, with order of vanishing at $s=1$ given by the rank of the Mordell-Weil 
group $A(K)$. The refined conjecture predicts that Tate-Shafarevich group $\Sha(A/K)$ is finite, and moreover that the leading coefficient of $L(A/K,s)$ at $s=1$ is given by the formula \begin{align}\label{RBSDT} \frac{\vert \Sha(A/K)\vert \cdot \operatorname{Reg}(A/K)\cdot \Phi(A/K)}{\vert A(K)_{\tors}\vert \cdot \vert A^t(K)_{\tors}\vert \cdot  \vert D \vert^{\frac{\dim(A)}{2}}} \cdot \prod_{v \mid \infty \atop v:K \rightarrow {\bf{R}}} \int_{A(K_v)} \vert \omega \vert \cdot \prod_{v \mid \infty \atop v: K \rightarrow {\bf{C}}}2 \int_{A(K_v)} \omega \wedge \overline{\omega}. \end{align} Here, $\operatorname{Reg}(A/K)$ denotes the regulator of $A$ over $K$, i.e. the determinant of the canonical height pairing on a basis of $A(K)/A(K)_{\tors}$, $\Phi(A/K)$ denotes the product \begin{align*} \Phi(A/K) &= \prod_{v \nmid \infty} c_v \left\vert \frac{\omega}{\omega_v^*}\right\vert_v, \end{align*} 
where $c_v = c_v(A)=[A(K_v):A_0(K_v)]$ is the local Tamagawa factor of $A$ at a finite prime $v$ of $K$, $\omega = \omega_A$ a fixed nonzero global exterior form (an invariant differential if $A$ is an elliptic curve), and $\omega_v^* = \omega_{A, v}^*$  the N\'eron differential at $v$. Moreover, $A^t$ denotes the dual abelian variety associated to $A$, and $D = D_K$ the absolute discriminant of $K$. We refer the reader to the article of Tate \cite{Ta} for details.\footnote{The reader will note that we have included extraneous real places in the formula $(\ref{RBSDT})$ above, i.e. as $K$ here is totally imaginary.} We consider a $p$-adic analogue of this formula $(\ref{RBSDT})$. Namely, let $\widetilde{A}(\kappa_v)$ denote the reduction of $A$ at a finite prime $v$ of $K$. Write $A(K)(p)$ to denote the $p$-primary component of $A(K)$, and $\widetilde{A}(\kappa_v)(p)$ that of $\widetilde{A}(\kappa_v)$. We first establish the following result, using standard techniques.

\begin{theorem}[Theorem \ref{main-definite}] 

Assume that $A$ is a principally polarized abelian variety 
having good ordinary reduction at each prime above $p$ in $K$. Assume additionally that the $p^{\infty}$-Selmer
group $\Sel(A/K)$ is finite. If the $p^{\infty}$-Selmer group $\Sel(A/K_{\mathfrak{p}^{\infty}})$ is $\Lambda(G)$-cotorsion, 
then the $G$-Euler characteristic $\chi(G, \Sel(A/K_{\mathfrak{p}^{\infty}}))$ is well defined, and given by the formula 
\begin{align}\label{definite-tamagawa} \chi(G,\Sel(A/K_{\mathfrak{p}^{\infty}})) = \frac{\vert  \Sha(A/K)(p) \vert}
{\vert A(K)(p)\vert^2} \cdot \prod_{v \mid p} \vert \widetilde{A}(\kappa_v)(p) \vert^2 \cdot \prod_{v \nmid p \atop \in S_{\ns}}
\ord_p \left(\frac{c_v(A)}{L_v(A/K,1)} \right). \end{align} Here, $S_{\ns}$ denotes the subset of $S$ of primes which
do not split completely in $K_{\mathfrak{p}^{\infty}}$, and $L_v(A/K, 1)$ denotes the Euler factor at $v$ of the global 
$L$-value $L(A/K, 1)$. \end{theorem}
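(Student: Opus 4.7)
The plan is to carry out a Greenberg--Coates-style control-and-local-computation argument, reducing the $G$-Euler characteristic to cohomological data over the base field $K$ plus local contributions at primes above $p$.

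First, I would verify that the Euler characteristic is well-defined. Since $G \cong {\bf{Z}}_p^\delta$ and $\Sel(A/K_{\mathfrak{p}^{\infty}})$ is $\Lambda(G)$-cotorsion, by the classical fact recalled in the introduction it suffices to prove that $H_0(G, \Sel(A/K_{\mathfrak{p}^{\infty}})) = \Sel(A/K_{\mathfrak{p}^{\infty}})^G$ is finite. I would obtain this through a control theorem, working with the fundamental commutative diagram
$$
\begin{CD}
0 @>>> \Sel(A/K) @>>> H^1(G_S(K), A_{p^{\infty}}) @>>> \bigoplus_{v \in S} J_v(K) \\
@. @VV\alpha V @VV\beta V @VV\gamma=\oplus_v\gamma_v V \\
0 @>>> \Sel(A/K_{\mathfrak{p}^{\infty}})^G @>>> H^1(G_S(K_{\mathfrak{p}^{\infty}}), A_{p^{\infty}})^G @>>> \Bigl(\bigoplus_{v \in S} J_v(K_{\mathfrak{p}^{\infty}})\Bigr)^G.
\end{CD}
$$
Inflation--restriction identifies $\ker(\beta) \cong H^1(G, A(K_{\mathfrak{p}^{\infty}})(p))$, which is finite, and the snake lemma together with Tate local duality at each $v \in S$ forces $\ker(\alpha)$ and $\coker(\alpha)$ to be finite; combined with the finiteness of $\Sel(A/K)$, this yields the finiteness of $\Sel(A/K_{\mathfrak{p}^{\infty}})^G$.

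Next, I would compute the local Euler characteristics place-by-place. For $v \nmid p$, a standard analysis shows that the $p$-primary part of $J_v(K_{\mathfrak{p}^{\infty}})$ contributes trivially to $\chi$: either the decomposition group of $v$ in $G$ has positive ${\bf{Z}}_p$-rank (so the cohomology of the cofinitely generated local module vanishes to the relevant order), or the Tamagawa factor at $v$ is a $p$-adic unit under the standing hypotheses of the definite/dihedral case. For $v \mid p$, the good ordinary reduction hypothesis provides a decomposition-group-equivariant filtration
$$
0 \longrightarrow A_{p^{\infty}}^{+} \longrightarrow A_{p^{\infty}} \longrightarrow A_{p^{\infty}}^{-} \longrightarrow 0,
$$
in which $A_{p^{\infty}}^{-}$ is unramified at $v$ and identifies with $\widetilde{A}(\overline{\kappa}_v)(p)$. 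A direct calculation combining Tate local duality with this filtration yields the local factor $|\widetilde{A}(\kappa_v)(p)|^2$ per prime $v \mid p$, the square reflecting the contributions of both $A_{p^{\infty}}^{-}$ and its Tate dual (identified via the principal polarization $A \cong A^t$).

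Finally, I would assemble the global formula by combining these local factors with the descent sequence
$$
0 \longrightarrow A(K) \otimes {\bf{Q}}_p/{\bf{Z}}_p \longrightarrow \Sel(A/K) \longrightarrow \Sha(A/K)(p) \longrightarrow 0.
$$
Since $\Sel(A/K)$ is finite, $A(K)$ has rank zero, hence $A(K)\otimes{\bf{Q}}_p/{\bf{Z}}_p = 0$ and $|\Sel(A/K)| = |\Sha(A/K)(p)|$, producing the numerator of (\ref{definite-tamagawa}). The denominator $|A(K)(p)|^2$ then emerges as the combined contribution of $H^0(K, A_{p^{\infty}}) = A(K)(p)$ (entering through the inflation--restriction description of $\ker(\alpha)$) and of its dual counterpart $A^t(K)(p) = A(K)(p)$ (entering through Tate's global duality and the principal polarization).

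I expect the main obstacle to be the careful bookkeeping at primes $v \mid p$: depending on how $\mathfrak{p}$ splits in $K$ and on the decomposition of $v$ in $K_{\mathfrak{p}^{\infty}}/K$, the local Galois group at $v$ may have ${\bf{Z}}_p$-rank strictly smaller than $\delta$, so one must verify that the ordinary Tate-duality calculation still produces exactly $|\widetilde{A}(\kappa_v)(p)|^2$ per prime above $p$, with no spurious factor depending on the discrepancy between the local and global Iwasawa algebras.
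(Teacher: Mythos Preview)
Your overall strategy---the fundamental diagram, snake lemma, and local-at-$p$ ordinary filtration---matches the paper's approach, but your handling of primes $v\nmid p$ contains a genuine error, and you omit a step that the paper treats as essential. For $v\nmid p$ you offer the dichotomy ``either the decomposition group has positive ${\bf Z}_p$-rank \ldots\ or the Tamagawa factor is a $p$-adic unit.'' Neither alternative is correct here: in the dihedral tower $K_{\mathfrak p^\infty}/K$ every prime $v\nmid p$ has \emph{trivial} decomposition group (Proposition~\ref{decomposition}(i)), because $K_{\mathfrak p^\infty}$ and $K^{\cyc}$ are linearly disjoint and the unramified local pro-$p$ extension is already exhausted by the cyclotomic line. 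Thus $\cd_p(G_w)=0$ and $\gamma_v$ is an isomorphism; this is precisely why no Tamagawa factors appear in~$(\ref{definite-tamagawa})$, not any hypothesis on $|c_v|_p$. Relatedly, your anticipated ``main obstacle''---that the local group at $v\mid p$ might have ${\bf Z}_p$-rank strictly less than~$\delta$---does not arise: Proposition~\ref{decomposition}(ii) shows $D_v\cong{\bf Z}_p^\delta$ for every $v\mid p$.

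The second gap is that you never establish surjectivity of the global localization map $\lambda_S(K_{\mathfrak p^\infty})$. The paper needs this to obtain the short exact sequence $(\ref{locseq})$ and the long exact sequence in $G$-cohomology from which the higher $H^i(G,\Sel)$ are computed; it is deduced (Corollary~\ref{surjlocAV}) from the Cassels--Poitou--Tate sequence for abelian varieties together with the $\Lambda(G)$-cotorsion hypothesis and the principal polarization $A\cong A^t$, which forces the compactified Selmer group $\mathfrak S(A^t/K_{\mathfrak p^\infty})$ to vanish. Without this step your argument controls $H^0$ but gives no handle on $H^i$ for $i\ge 1$, so the Euler characteristic formula is not yet within reach.
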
 This result has various antecedents in the literature (see e.g. \cite{C}, \cite{CS}, or \cite{Ze}), though the setting 
of dihedral extensions considered here differs greatly from the cyclotomic or single-variable settings  
considered in these works. Anyhow, let us now examine the complementary setting where the $p^{\infty}$-Selmer group 
$\Sel(A/K_{\mathfrak{p}^{\infty}})$ is {\it{not}} expected to be $\Lambda(G)$-cotorsion. This setting is generally known as the {\it{indefinite case}} 
in the literature, for reasons that we likewise do not dwell on here.\footnote{Roughly (as before), in the setting where $A$ is a modular abelian variety 
defined over $F$, the Hasse-Weil $L$-function $L(A/K,s)$ has an analytic continuation given by the product of Rankin-Selberg
 $L$-functions of cuspidal eigenforms on some totally indefinite quaternion algebra over $F$ (where indefinite here means
 ramified at all but one real place of $F$), and this in particular can be used to deduce that the root number of $L(A/K,s)$ is equal 
 to $-1$ (as opposed to $1$). We refer the reader to the excellent paper of Howard \cite{Ho2} for an account of the Iwasawa theory
 in this setting.} To fix ideas, let us assume the so-called {\it{weak Heegner hypothesis}} that $\chi(N) = (-1)^{d-1}$, where $\chi$ denotes the 
 quadratic character associated to $K/F$, $N$ the arithmetic conductor of $A$, and $d =[F:{\bf{Q}}]$ the degree of $F$. This condition ensures
 that the abelian variety $A$ comes equipped with families of Heegner or CM points defined over ring class extensions of $K$. 
 We refer the reader to \cite{Ho2} or the discussion below for details. Let us first consider the issue of describing the $G$-Euler 
 characteristic of the $\Lambda(G)$-cotorsion submodule $\Sel(A/K_{\mathfrak{p}^{\infty}})_{\cotors}$ of $\Sel(A/K_{\mathfrak{p}^{\infty}})$, 
 following the conjecture of Perrin-Riou \cite{PR87} and Howard \cite{Ho2}. To state this version of the Iwasawa main conjecture, let 
 us write $\mathfrak{S}_{\infty} = \mathfrak{S}(A/K_{\mathfrak{p}^{\infty}})$ denote the compactified $p^{\infty}$-Selmer group of $A$ in 
 $K_{\mathfrak{p}^{\infty}}$, which has the structure of a compact $\Lambda(G)$-module. Let $\mathfrak{H}_{\infty}$ denote the 
 $\Lambda(G)$-submodule of $\mathfrak{S}_{\infty}$ generated by the images of Heegner points of $\mathfrak{p}$-power 
 conductor under the appropriate norm homomorphisms. We refer the reader to \cite{Ho} or to the passage below for more details of this construction. 
 It is often known and generally expected to be the case that the $\Lambda(G)$-modules $\mathfrak{S}_{\infty}$ and $\mathfrak{H}_{\infty}$ are 
 torsionfree of $\Lambda(G)$-rank one, in which case the quotient $\mathfrak{S}_{\infty}/\mathfrak{H}_{\infty}$ has $\Lambda(G)$-rank zero. Hence in this setting, the quotient 
 $\mathfrak{S}_{\infty}/\mathfrak{H}_{\infty}$ is a torsion $\Lambda(G)$-module, so has a well defined $\Lambda(G)$-characteristic power series 
 $\operatorname{char}(\mathfrak{S}_{\infty}/\mathfrak{H}_{\infty})$. Let $\operatorname{char}(\mathfrak{S}_{\infty}/\mathfrak{H}_{\infty})^*$ denote the image of 
 $\operatorname{char}(\mathfrak{S}_{\infty}/\mathfrak{H}_{\infty})$ under the involution of $\Lambda(G)$ induced by inversion in $G.$  Given an integer $n \geq 0$, let 
 $K[\mathfrak{p}^{n}]$ denote the ring class field of conductor $\mathfrak{p}^n$ over $K$, with Galois group $G[\mathfrak{p}^n]$. Write 
 $K[\mathfrak{p}^{\infty}] = \bigcup_{n \geq 0} K[\mathfrak{p}^n]$ to denote the union of all ring class extensions of $\mathfrak{p}$-power conductor, with Galois group 
$G[\mathfrak{p}^{\infty}] = \Gal(K[\mathfrak{p}^{\infty}]/K) = \varprojlim_n G[\mathfrak{p}^n]$. Recall that we let $K_{\mathfrak{p}^{\infty}}$ denote the dihedral or anticyclotomic 
${\bf{Z}}_p^{\delta}$-extension of $K$. We write $G = \varprojlim_n G_{\mathfrak{p}^n}$ to denote the associated profinite Galois group 
$G_{\mathfrak{p}^n} = \Gal(K_{\mathfrak{p}^n}/K)$. Let $\mathfrak{K}_n$ denote the Artin symbol  of $\mathfrak{d}_n = (\sqrt{D}\mathcal{O}_K) \cap \mathcal{O}_{p^n}$. 
Here, $D = D_K$ denotes the absolute discriminant of $K$, and $\mathcal{O}_{p^n}$ the $\mathcal{O}_F$-order of conductor $\mathfrak{p}^n$ in $K$, i.e. 
$\mathcal{O}_{p^n}= \mathcal{O}_F + \mathfrak{p}^n \mathcal{O}_K$. Let $\mathfrak{K} = \varprojlim_n \mathfrak{K}_n$. Let $\langle ~,~\rangle_{A, K[p^n]}$ denote the 
$p$-adic height pairing \begin{align*} \langle ~,~\rangle_{A, K[p^n]}: A^t(K[p^n]) \times A(K[p^n]) &\longrightarrow {\bf{Q}}_p, \end{align*} defined in Howard \cite[(9), $\S 3.3$]{Ho} 
and Perrin-Riou \cite{PR91}. Let us again assume for simplicity that $A$ is principally polarized. There exists by a construction of Perrin-Riou \cite{PR87} (see also \cite{PR91}) a 
$p$-adic height pairing \begin{align*} \mathfrak{h}_n: \mathfrak{S}(A/K_{p^n}) \times \mathfrak{S}(A/K_{p^n}) &\longrightarrow c^{-1}{\bf{Z}}_p \end{align*} whose restriction to the 
image of the Kummer map $A(K[p^n]) \otimes {\bf{Z}}_p \rightarrow \mathfrak{S}(E/K[p^n])$ coincides with the pairing $\langle ~,~\rangle_{A, K[p^n]}$ after identifying $A^t \cong A$ 
in the canonical way (cf. \cite[Proposition 0.0.4]{Ho}). Here, $c \in {\bf{Z}}_p$ is some integer that does not depend on the choice of $n$. Following Perrin-Riou \cite{PR87} and 
Howard \cite{Ho}, these pairings can be used to construct a pairing \begin{align*} \mathfrak{h}_{\infty}: \mathfrak{S}_{\infty} \times \mathfrak{S}_{\infty} &\longrightarrow 
c^{-1}{\bf{Z}}_p[[G]] \\ (\varprojlim a_n, \varprojlim_n b_n) &\longmapsto \varprojlim_n \sum_{\sigma \in G_{p^n}} \mathfrak{h}_n(a_n, b_n^{\sigma}) \cdot \sigma. \end{align*} 
We then define the {\it{$p$-adic regulator $\mathcal{R} = \mathcal{R}(A/K_{p^{\infty}})$}} to be the image of $\mathfrak{S}_{\infty}$ in $c^{-1}{\bf{Z}}_p[[G]]$ under this pairing 
$\mathfrak{h}_{\infty}$. Let ${\bf{e}}$ denote the natural projection \begin{align*} {\bf{e}}: {\bf{Z}}_p[[G[p^{\infty}]]] \longrightarrow {\bf{Z}}_p[[G]]. \end{align*} We then make the 
following conjecture, reformulating those of \cite{PR87}, \cite{Ho} and \cite{Ho2}.

\begin{conjecture}[Conjecture \ref{-1}] 

Let $A$ be a principally polarized abelian variety defined over $F$ having good ordinary reduction at each prime above $p$ in $K$. Assume that $A$ satisfies the 
weak-Heegner hypothesis with respect to $N$ and $K$. Then, the $G$-Euler characteristic $\chi(G, \Sel(A/K_{\mathfrak{p}^{\infty}})_{\cotors})$ is well-defined, and given 
by the formula \begin{align*} \chi(G, \Sel(A/K_{\mathfrak{p}^{\infty}})_{\cotors}) &= \vert \operatorname{char}(\mathfrak{S}_{\infty}/\mathfrak{H}_{\infty})(0) \cdot 
\operatorname{char}(\mathfrak{S}_{\infty}/\mathfrak{H}_{\infty})^*(0) \cdot \mathfrak{R}(0)\vert_p^{-1}. \end{align*} Here, $\mathfrak{R}$ denotes the ideal 
$({\bf{e}}(\mathfrak{K}))^{-1} \mathcal{R}$, which lies in the ${\bf{Z}}_p$-Iwasawa algebra ${\bf{Z}}_p[[G]]$.\end{conjecture} We refer the reader to the divisibility 
proved in Howard \cite[Theorem B]{Ho2} for results towards this conjecture. We can also deduce the following result toward this conjecture in the setting where $A$ is an 
elliptic curve defined over the totally real field ${\bf{Q}}$. Hence, $A$ is modular by the fundamental work of Wiles \cite{Wi}, Taylor-Wiles \cite{TW}, and 
Breuil-Conrad-Diamond-Taylor \cite{BCDT}. Let $f \in S_2(\Gamma_0(N))$ denote the cuspidal newform of level $N$ associated to $A$ by modularity. 
Let $K$ be an imaginary quadratic field in which the fixed prime $p$ os not ramified. Let $K(\mu_{p^{\infty}})$ denote the extension obtained from $K$ 
by adjoining all primitive $p$-power roots of unity, with $\Gamma = \Gal(K(\mu_{p^{\infty}})/K)$ its Galois group. Thus, $\Gamma$ is topologically isomorphic to 
${\bf{Z}}_p^{\times}$. Let $\mathcal{L}_f$ denote the two-variable $p$-adic $L$-function 
constructed by Hida \cite{Hi} and Perrin-Riou \cite{PR87}, which lies in the ${\bf{Z}}_p$-Iwasawa algebra ${\bf{Z}}_p[[G[p^{\infty}] \times \Gamma]]$, as explained in the proof of 
\cite[Theorem 2.9]{VO3}. Fixing a topological generator $\gamma$ of $\Gamma$, we write the expansion of $\mathcal{L}_f$ in the cyclotomic variable $\gamma -1$ as 
\begin{align} \label{cycexp} \mathcal{L}_f &= \mathcal{L}_{f,0} + \mathcal{L}_{f,1}(\gamma -1) + \mathcal{L}_{f,2}(\gamma -1)^2 + \ldots ,\end{align} with 
$\mathcal{L}_{f, i} \in {\bf{Z}}_p[[G[p^{\infty}]]]$ for each $i \geq 0$. If we assume the so-called {\it{Heegner hypothesis}} that each prime dividing $N$ splits in $K$, 
then root number of the Hasse-Weil $L$-function $L(A/K, s)$ equals $-1$, which forces the leading term $\mathcal{L}_{f,0}$ in $(\ref{cycexp})$ to vanish. 
Let $\Lambda(G)$ denote the ${\bf{Z}}_p$-Iwasawa algebra ${\bf{Z}}_p[[G]]$. We deduce the following result from the $\Lambda(G)$-adic Gross-Zagier 
theorem of Howard \cite[Theorem B]{Ho} with the two-variable main conjecture shown in Skinner-Urban \cite[Theorem 3]{SU} and the integrality of the 
two-variable $p$-adic $L$-function $\mathcal{L}_f$ shown in \cite{VO3}.
 
\begin{corollary}[Corollary \ref{EC-1}] 

Let $A$ be an elliptic curve defined over ${\bf{Q}}$, associated by modularity to a newform $f \in S_2(\Gamma_0(N))$. Let $K$ be an imaginary quadratic 
extension of ${\bf{Q}}$ of discriminant $D$ in which the fixed prime $p$ is not ramified. Assume that the elliptic curve $A$ has good ordinary reduction at $p$, 
that each prime dividing $N$ splits in $K$, and that $D$ is odd and not equal to $-3$. Assume in addition that the absolute Galois group 
$G_K =\Gal(\overline{K}/K)$ surjects onto the ${\bf{Z}}_p$-automorphisms of $T_p(A)$, that $\chi(p)=1$, that $p$ does not divide the class number of $K$,
and that the compactified Selmer group $\mathfrak{S}_{\infty}$ has $\Lambda(G)$-rank one. Then the $G$-Euler characteristic of $\chi(G, \Sel(A/K_{p^{\infty}})_{\cotors})$ 
is well-defined, and given by the formulae
\begin{align*} \chi(G, \Sel_{p^{\infty}}(A/K_{p^{\infty}})_{\cotors}) & \geq \vert {\bf{e}}(\mathcal{L}_{f,1}(0)) \vert_p^{-1} 
 \\ &= \vert  {\bf{e}}(\mathfrak{K} \cdot \log_p(\gamma))^{-1}\mathfrak{h}_{\infty}(\widetilde{x}_{\infty}, 
\widetilde{x}_{\infty})(0) \vert_p^{-1} \\ &= \left\vert \operatorname{char}(\mathfrak{S}_{\infty}/\mathfrak{H}_{\infty})(0) 
\cdot \operatorname{char}(\mathfrak{S}_{\infty}/\mathfrak{H}_{\infty})^*(0) \cdot \mathfrak{R}(0) \right \vert_p^{-1}.\end{align*} 
Here, $\log_p$ is the $p$-adic logarithm (composed with a fixed isomorphism $\Gamma \cong {\bf{Z}}_p^{\times}$), 
$\widetilde{x}_{\infty}$ is a generator of $\mathfrak{H}_{\infty}$ constructed from a compatible sequence of regularized Heegner
points (described below), and $\mathfrak{R}$ is the ideal $({\bf{e}}(\mathfrak{K} \cdot \log_p(\gamma))^{-1} \mathcal{R} 
\in \Lambda(G)$. Moreover, these formulae for $\chi(G, \Sel(A/K_{p^{\infty}})_{\cotors})$ do not depend on the choice of 
topological generator $\gamma \in \Gamma$. \end{corollary} Though relatively simple to deduce, this result is revealing in that it 
links the nonvanishing of the coefficient $\mathcal{L}_{f,1}$ to the nondegeneracy of the height pairing $\mathfrak{h}_{\infty}$ and the $p$-adic regulator
$\mathcal{R}$. Anyhow, we have shown here we can verify the $p$-part of the refined Birch and Swinnerton-Dyer formula (and variations) by relatively 
straightforward computations of Euler characteristics after knowing partial results towards the associated Iwasawa main conjectures. It would be interesting 
to relate these formula more precisely to the modular setting outlined in Pollack-Weston \cite{PW}, as well as perhaps to the conjectures of 
Shimura/Prasanna \cite{Pr} via Ribet-Takahashi \cite{RT}. It would also be interesting to relate the conjectural formula described above in the 
indefinite setting to the conjecture of Kolyvagin \cite{Ko} on indivisibility of Heegner points (and natural extensions to CM points on quaternionic 
Shimura curves). However, such problems lie beyond the scope of the present note.

\section{Ring class towers}

\begin{remark}[The dihedral or anticyclotomic ${\bf{Z}}_p^{\delta}$-extension $K_{\mathfrak{p}^{\infty}}$ of $K$.]

Suppose that $F$ is any totally real number field, and $K$ any totally imaginary quadratic extension of $F$. Recall that we fix throughout a rational prime $p$, 
and let $\mathfrak{p}$ denote a fixed prime above $p$ in $F$. Given an integral ideal $\mathfrak{c} \subseteq \mathcal{O}_F$, we write 
$\mathcal{O}_{\mathfrak{c}} = \mathcal{O}_F + \mathfrak{c}\mathcal{O}_K$ to denote the $\mathcal{O}_F$-order of conductor $\mathfrak{c}$ in $K$. 
The {\it{ring class field of conductor $\mathfrak{c}$ of $K$}} is the Galois extension $K[\mathfrak{c}]$ of $K$ characterized via class field theory 
by the identification \begin{align*}\begin{CD} \widehat{K}^{\times}/\widehat{F}^{\times} \widehat{\mathcal{O}}_{\mathfrak{c}}^{\times}K^{\times} 
@>{\rec_K}>> \Gal(K[\mathfrak{c}]/K). \end{CD}\end{align*} Here, $\rec_K$ denotes the Artin reciprocity map, normalized to send uniformizers to 
geometric Frobenius automorphisms. Let us write $G[\mathfrak{c}]$ to denote the Galois group $\Gal(K[\mathfrak{c}]/K)$. We consider the union
of all ring class extensions of $\mathfrak{p}$-power conductor \begin{align*} K[\mathfrak{p}^{\infty}] = \bigcup_{n \geq 0} K[\mathfrak{p}^n], \end{align*} along 
with its Galois group $G[\mathfrak{p}^{\infty}]= \Gal(K[\mathfrak{p}^{\infty}]/K)$, whose profinite structure we write as \begin{align*}  G[\mathfrak{p}^{\infty}] 
&= \varprojlim_n G[\mathfrak{p}^n]. \end{align*} A standard argument in the theory of profinite groups shows that the torsion subgroup 
$G[\mathfrak{p}^{\infty}]_{\tors} \subseteq G[\mathfrak{p}^{\infty}]$ is finite, and moreover that the quotient $G[\mathfrak{p}^{\infty}]/G[\mathfrak{p}^{\infty}]_{\tors}$ 
is topologically isomorphic to ${\bf{Z}}_p^{\delta}$, where $\delta = [F_{\mathfrak{p}}:{\bf{Q}}_p]$ is the residue degree of $\mathfrak{p}$. We refer the reader e.g. to 
\cite[Corollary 2.2]{CV} for details on how to deduce this fact via a basic computation with adelic quotient groups. Let $G_{\mathfrak{p}^{\infty}}$ denote the Galois 
group $G[\mathfrak{p}^{\infty}]/G[\mathfrak{p}^{\infty}]_{\tors} \cong {\bf{Z}}_p^{\delta}$, which has the structure of a profinite group \begin{align*} G_{\mathfrak{p}^{\infty}} 
&= \varprojlim_n G_{\mathfrak{p}^n}.\end{align*} Let us then write $K_{\mathfrak{p}^n}$ to denote the abelian extension of $K$ contained in $K[\mathfrak{p}^n]$ such that 
$G_{\mathfrak{p}^n} = \Gal(K_{\mathfrak{p}^n}/K)$. When there is no risk of confusion, we shall simply write $G$ to denote the Galois group $G_{\mathfrak{p}^{\infty}}.$ 

\begin{lemma}\label{ram} The extension $K_{\mathfrak{p}^{\infty}}$ over $K$ is unramified outside of $p$. \end{lemma}

\begin{proof} The result is a standard deduction in local class field theory, as shown for instance in Iwasawa \cite[Theorem 1 $\S 2.2$]{Iw}. In general, given any $p$-adic field 
$L$, the abelianization of $G_L = \Gal(\overline{L}/L)$ is a completion of $L^{\times}$. The subgroup corresponding to ramified extensions is $\mathcal{O}_L^{\times},$ which 
is a profinite abelian group whose open subgroups are pro-$p$. The result is then easy to deduce. \end{proof} Finally, let us recall that the Galois extension 
$K[\mathfrak{p}^{\infty}]$ is of generalized dihedral type. Equivalently, the complex conjugation automorphism of $\Gal(K/F)$ acts by inversion on $G[\mathfrak{p}^{\infty}]$. 
It follows that $K[\mathfrak{p}^{\infty}]$ is linearly disjoint over $K$ to the cyclotomic extension $K(\mu_{p^{\infty}})$, where $\mu_{p^{\infty}}$ denotes the set of all $p$-power 
roots of unity, because the complex conjugation automorphism acts trivially on $\Gal(K(\mu_{p^{\infty}})/K)$. For this reason, the extension $K_{\mathfrak{p}^{\infty}}/K$ is often 
called the {\it{anticyclotomic}} ${\bf{Z}}_p^{\delta}$-extension of $K$. \end{remark}

\begin{remark}[Decomposition of primes in $K_{\mathfrak{p}^{\infty}}$.]

Let us now write $G$ to denote the Galois group $G_{\mathfrak{p}^{\infty}} = \Gal(K_{\mathfrak{p}^{\infty}}/K)$. 
Given a finite prime $v$ of $K$, let $D_v \subseteq G$ denote the decomposition subgroup at a fixed prime 
above $v$ in $K_{\mathfrak{p}^{\infty}}.$ We shall often make the standard
identification $D_v$ with the Galois group $G_w=\Gal(K_{\mathfrak{p}^{\infty}, w}/K_v),$
where $w$ is a fixed prime above $v$ in $K_{\mathfrak{p}^{\infty}}.$

\begin{proposition}\label{decomposition} Let $v$ be a finite prime of $K$.

\begin{itemize}

\item[(i)] If $v$ does not divide $p$ and splits completely in $K_{\mathfrak{p}^{\infty}}$, then $D_v$ is trivial.

\item[(ii)] If $v$ does not divide $p$ and does not split completely in $K_{\mathfrak{p}^{\infty}}$, then $D_v$ 
can be identified with a finite-index subgroup of $G \approx {\bf{Z}}_p^{\delta}$. 

\item[(iii)] If $v$ divides $p$, then $D_v$ is topologically isomorphic to ${\bf{Z}}_p^{\delta}$. \end{itemize}
\end{proposition} 

\begin{proof} The result is easy to deduce from standard properties of local fields, see e.g. \cite[Proposition 5.10]{Cox}. \end{proof} \end{remark}

\section{Galois cohomology}

We now record for later use some basic facts from Galois cohomology.

\begin{remark}[$p$-cohomological dimension.] 

Recall that the $p$-cohomological dimension $\cd_p(G)$ of a profinite group $G$ is the smallest integer $n$ satisfying
the condition that for any discrete torsion $G$-module $M$ and any integer $q>n$, the $p$-primary component of $H^q(G, M)$ 
is zero (\cite[$\S 3.1$]{Se}). We have the following crucial characterization of $\cd_p$.

\begin{theorem}[Serre-Lazard]\label{SL} 

Let $G$ be any $p$-adic Lie group. Let  $M$ be any discrete $G$-module. If $G$ does not contain an element of order $p$, then $\cd_p(G)$ 
is equal to the dimension of $G$ as a $p$-adic Lie group.\end{theorem}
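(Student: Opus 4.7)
The plan is to reduce the statement to the case of a uniform pro-$p$ open subgroup and then invoke Lazard's structural analysis of its Iwasawa algebra. As a first step, the structure theory of $p$-adic Lie groups guarantees that $G$ contains an open subgroup $U$ which is uniform pro-$p$ of the same $p$-adic analytic dimension $d = \dim G$. Since $G$ has no element of order $p$ by hypothesis, the standard result (\cite[I.\S3.3, Prop.~14]{Se}) that $\cd_p$ is invariant under passage to open subgroups of a torsion-free profinite group applies and yields $\cd_p(G) = \cd_p(U)$. It therefore suffices to prove the theorem when $G = U$ is uniform pro-$p$ of dimension $d$.

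Second, I would establish the upper bound $\cd_p(U) \leq d$ using Lazard's description of the completed group algebra $\Lambda = {\bf{F}}_p[[U]]$. For $U$ uniform of dimension $d$, this $\Lambda$ is a Noetherian local ring whose associated graded ring (with respect to the augmentation filtration) is a polynomial algebra in $d$ variables over ${\bf{F}}_p$; in particular $\Lambda$ has global dimension $d$. Continuous cohomology of a discrete $p$-torsion $U$-module $M$ can then be computed in terms of $\operatorname{Ext}^i_{\Lambda}({\bf{F}}_p, -)$ applied to the Pontryagin dual of $M$, and these groups vanish for $i > d$ by the global dimension bound. Hence $\cd_p(U) \leq d$.

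Third, I would match this with the lower bound $\cd_p(U) \geq d$ by exhibiting a discrete $p$-torsion $U$-module with nontrivial $H^d$. The cleanest route is to take the trivial module ${\bf{F}}_p$ and invoke Lazard's Poincar\'e duality for uniform pro-$p$ groups, which gives $H^d(U, {\bf{F}}_p) \cong {\bf{F}}_p$. An equivalent direct computation proceeds via a Koszul-type minimal free resolution of ${\bf{F}}_p$ over $\Lambda$ of length exactly $d$, whose top term contributes an ${\bf{F}}_p$ to $\operatorname{Ext}^d_{\Lambda}({\bf{F}}_p, {\bf{F}}_p)$.

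The main obstacle is that the substantive content is entirely packaged in Lazard's two deep inputs: the existence of a uniform pro-$p$ open subgroup of dimension equal to $\dim G$, and the ring-theoretic/duality properties of the associated Iwasawa algebra $\Lambda$. Once these are taken as black boxes, the assembly of the upper and lower bounds above together with the Serre invariance step is essentially formal, and the theorem follows.
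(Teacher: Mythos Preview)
Your proposal is correct and in fact supplies far more than the paper does: the paper's ``proof'' of Theorem~\ref{SL} consists entirely of the citation ``See \cite{Se2} and \cite{Laz}'', with no argument given. Your sketch---reduce to an open uniform pro-$p$ subgroup via Serre's invariance of $\cd_p$ under open subgroups in the absence of $p$-torsion, then bound $\cd_p$ above by the global dimension of the mod-$p$ Iwasawa algebra and below via Lazard's Poincar\'e duality---is exactly the content one extracts from those references, so there is nothing to compare beyond noting that you have unpacked what the paper treats as a black box.
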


\begin{proof} See \cite{Se2} and \cite{Laz}. \end{proof} Since $G_{\mathfrak{p}^{\infty}} \cong {\bf{Z}}_p^{\delta}$ has no point of order $p$, we obtain the following consequence.

\begin{corollary}\label{decomposition2} 

Let $G$ denote the Galois group $G_{\mathfrak{p}^{\infty}} \cong {\bf{Z}}_{p}^{\delta}$ defined above. Then, $\cd_p(G) = \delta$. Moreover, given a finite prime $v$ of $K$,

\begin{itemize}

\item[(i)]$\cd_p(D_v) = 0$ if $v$ does not divide $p$ and splits completely in $K_{\mathfrak{p}^{\infty}}$;
\item[(ii)] $\cd_p(D_v) = \delta$ if $v$ does divide $p$.

\end{itemize} \end{corollary}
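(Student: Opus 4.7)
The plan is to read off all three assertions directly from Theorem \ref{SL} combined with Proposition \ref{decomposition}; there is essentially no real content beyond checking that the relevant groups have no $p$-torsion and identifying their dimensions as $p$-adic Lie groups.

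First I would handle the statement about $G$ itself. By construction $G = G_{\mathfrak{p}^{\infty}}$ is topologically isomorphic to ${\bf{Z}}_p^{\delta}$, which is a torsion-free $p$-adic Lie group of dimension $\delta$ (in particular it contains no element of order $p$). Theorem \ref{SL} then applies and gives $\cd_p(G) = \delta$.

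Next I would treat the decomposition subgroups $D_v$ for $v$ not dividing $p$. By Proposition \ref{decomposition}(i), $D_v$ is trivial in this case. The trivial group has cohomological dimension $0$, so $\cd_p(D_v) = 0$. Finally, for $v \mid p$, Proposition \ref{decomposition}(ii) tells us that $D_v \cong {\bf{Z}}_p^{\delta}$. This is again a torsion-free $p$-adic Lie group of dimension $\delta$, so a second application of Theorem \ref{SL} yields $\cd_p(D_v) = \delta$.

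The only thing to verify is the hypothesis of Serre–Lazard in each case, namely the absence of $p$-torsion, and this is automatic for ${\bf{Z}}_p^{\delta}$. Hence there is no real obstacle; the corollary is a formal consequence of the results it cites, and the proof amounts to essentially one sentence per bullet.
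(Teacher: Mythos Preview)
Your proposal is correct and follows exactly the same route as the paper's own proof: apply Theorem~\ref{SL} to $G\cong{\bf Z}_p^{\delta}$ for the first claim, and then invoke Proposition~\ref{decomposition} to identify $D_v$ as either trivial or ${\bf Z}_p^{\delta}$ for the second. There is nothing to add.
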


\begin{proof} The claim follows from Theorem \ref{SL}, using Proposition \ref{decomposition} for (i) and (ii). 
\end{proof} \end{remark}

\begin{remark}[The Hochschild-Serre spectral sequence.]

Fix $S$ any finite set of primes of $K$ containing 
the primes above $p$ in $K$, and the primes where $A$ has 
bad reduction. Let $K_S$ denote the maximal extension 
of $K$ that is unramified outside of $S$ and the archimedean 
places of $K$. Let $G_S(K) = \Gal(K_S/K) $ denote the corresponding 
Galois group. Observe that there is an inclusion of fields
$K_{\mathfrak{p}^{\infty}} \subset K_S$. Given any intermediate field 
$L$ with $K \subset L \subset K_{\mathfrak{p}^{\infty}}$, let \begin{align*} G_S(L) = \Gal(K_S/L).\end{align*} 
Recall again that we write $G$ to denote the Galois group $G_{\mathfrak{p}^{\infty}}
= \Gal(K_{\mathfrak{p}^{\infty}}/K)$.

\begin{lemma} For all integers $i \geq 1$, we have
bijections \begin{align}\label{HSSSg} H^i(G,
H^i(G_S(K_{\mathfrak{p}^{\infty}}), A_{p^{\infty}})&\cong
H^{i+2}(G, A_{p^{\infty}}). \end{align}
\end{lemma}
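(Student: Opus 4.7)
The plan is to derive the bijection from the Hochschild--Serre spectral sequence attached to the normal subgroup $G_S(K_{\mathfrak{p}^{\infty}}) \triangleleft G_S(K)$ with quotient $G$,
\begin{equation*}
E_2^{p,q} = H^p\bigl(G, H^q(G_S(K_{\mathfrak{p}^{\infty}}), A_{p^{\infty}})\bigr) \Longrightarrow H^{p+q}(G_S(K), A_{p^{\infty}}),
\end{equation*}
and to read the displayed isomorphism off the $d_2$-differentials once the $E_2$-page has been collapsed to only a few surviving rows.

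First I would marshal the necessary cohomological-dimension inputs. Because $S$ contains all primes above $p$, the classical bound $\cd_p(G_S(K)) \leq 2$ (and its analogue for $G_S(K_{\mathfrak{p}^{\infty}})$) forces the abutment to vanish in all degrees $\geq 3$ and kills every row $q \geq 3$ of $E_2$. Combined with the weak-Leopoldt-type vanishing $H^2(G_S(K_{\mathfrak{p}^{\infty}}), A_{p^{\infty}}) = 0$, only the rows $q = 0, 1$ survive, so the spectral sequence degenerates at $E_3$ and its convergence is controlled entirely by the differentials
\begin{equation*}
d_2 \colon H^p\bigl(G, H^1(G_S(K_{\mathfrak{p}^{\infty}}), A_{p^{\infty}})\bigr) \longrightarrow H^{p+2}\bigl(G, A_{p^{\infty}}(K_{\mathfrak{p}^{\infty}})\bigr).
\end{equation*}
The two-row structure then gives short exact sequences $0 \to E_{\infty}^{n,0} \to H^n(G_S(K), A_{p^{\infty}}) \to E_{\infty}^{n-1,1} \to 0$, and setting $n = i+2 \geq 3$ the vanishing of the abutment forces the relevant $d_2$ to be both surjective (from $E_{\infty}^{i+2,0} = 0$) and injective (from $E_{\infty}^{i+1,1} = 0$), yielding the claimed bijection (\ref{HSSSg}). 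The borderline degree $i=1$ calls for a small separate check, handled by invoking Corollary \ref{decomposition2} to bound $\cd_p(G) = \delta$ along the first column of $E_2$.

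The main obstacle will be securing the weak-Leopoldt input $H^2(G_S(K_{\mathfrak{p}^{\infty}}), A_{p^{\infty}}) = 0$ in the present anticyclotomic ${\bf{Z}}_p^{\delta}$-setting, which is substantially less classical than its cyclotomic counterpart. Under the standing ordinary-reduction hypothesis on $A$ I would either invoke the analogues established by Perrin-Riou, or control the cohomology by descending from the compositum $K_{\infty} = K_{\mathfrak{p}^{\infty}} K^{\cyc}$ introduced later in the paper, reducing the vanishing to the known cyclotomic statement along $K^{\cyc}/K$. Once that input is in hand, everything else is purely formal spectral-sequence bookkeeping.
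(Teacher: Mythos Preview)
Your approach is the same as the paper's: both run the Hochschild--Serre spectral sequence for $G_S(K_{\mathfrak{p}^{\infty}}) \triangleleft G_S(K)$ and kill enough terms to force the $d_2$-differential $E_2^{i,1} \to E_2^{i+2,0}$ to be an isomorphism. The paper's argument is terser --- it simply writes down the exact sequence
\[
H^{i+1}(G_S(K), A_{p^{\infty}}) \longrightarrow H^i\bigl(G, H^1(G_S(K_{\mathfrak{p}^{\infty}}), A_{p^{\infty}})\bigr) \longrightarrow H^{i+2}(G, A_{p^{\infty}})
\]
(citing Serre) and invokes the vanishing $H^i(G_S(K), A_{p^{\infty}}) = 0$ for all $i \geq 2$, which handles both ends at once, including $i=1$.

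Two small points where your write-up drifts. First, the bookkeeping: injectivity of $d_2^{i,1}$ is governed by $E_\infty^{i,1}$, not $E_\infty^{i+1,1}$; with the correct index the borderline case $i=1$ reduces exactly to $H^2(G_S(K), A_{p^{\infty}}) = 0$, which the paper simply asserts as well known. Your proposed patch via $\cd_p(G) = \delta$ from Corollary~\ref{decomposition2} does not address this --- that bound controls vanishing along the $p$-axis of $E_2$, not the abutment in degree $2$. Second, you are right that the two-row structure needs $H^2(G_S(K_{\mathfrak{p}^{\infty}}), A_{p^{\infty}}) = 0$, and the paper never justifies this at the point where the lemma is stated; it only recovers it later (Corollary~\ref{surjlocAV}) under the cotorsion hypothesis via Cassels--Poitou--Tate. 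So your instinct that this input is nontrivial in the anticyclotomic setting is well founded, and your proposed descent from $K_\infty$ is a reasonable route --- but be aware that the paper itself is content to leave this implicit.
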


\begin{proof} See \cite[$\S$ 2.2.6]{Se} or \cite{Se2}. We claim that 
for all $i\geq 0$, a direct application of the Hochschild-Serre 
spectral sequence gives the exact sequence \begin{align*} H^{i+1}(G_S(K), 
A_{p^{\infty}}) \longrightarrow H^i(G, H^1(G_S(K_{\mathfrak{p}^{\infty}}), 
A_{p^{\infty}}))\longrightarrow H^{i+2}(G, A_{p^{\infty}}).\end{align*} 
On the other hand, it is well known that $H^i(G_S(K), A_{p^{\infty}}) 
= 0$ for all $i \geq 2$. Hence, the claim holds for all $i \geq 1$. \end{proof} \end{remark}

\begin{remark}[Some identifications in local cohomology.]

Let $L$ be a finite extension of $K$. Recall that for each finite prime $v$ of $K$, we define 
\begin{align}\label{J_v(L)} J_v(L) = \bigoplus_{w \mid v}  H^1(L_w, A(\overline{L}_w))(p), \end{align} where the sum runs over all primes $w$ above $v$ in $L$. 
If $L_{\infty}$ is an infinite extension of $K$, then we define the associated group $J_v(L_{\infty})$ by taking the inductive limit over finite extensions $L$ of $K$ 
contained in $L_{\infty}$ with respect to restriction maps, \begin{align}\label{J_v(H)} J_v(L_{\infty}) = \varinjlim_L J_v(L).\end{align} Given a finite prime 
$v$ of $K$, let us fix a prime $w$ above $v$ in $K_{\mathfrak{p}^{\infty}}$. Recall that we then write $G_w$ to denote the Galois group $\Gal(K_{\mathfrak{p}^{\infty}, w}/K_v)$, 
which we identify with the decomposition subgroup $D_v \subseteq G$.

\begin{lemma} Let $v$ be a finite prime of $K$, and $w$ any prime above $v$ in $K_{\mathfrak{p}^{\infty}}$. Then, for each integer $i \geq 0$, there is a canonical bijection
\begin{align}\label{shapiro} H^i(G, J_v(K_{\mathfrak{p}^{\infty}})) &\cong H^i(G_w, H^1(K_{\mathfrak{p}^{\infty}, w}, A)(p)). \end{align} \end{lemma}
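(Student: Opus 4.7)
The plan is to prove this by recognizing $J_v(K_{\mathfrak{p}^{\infty}})$ as an induced $G$-module from the decomposition subgroup $G_w$, and then invoking Shapiro's lemma. I would carry out the argument in three main steps.

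First, I would rewrite the definition of $J_v(K_{\mathfrak{p}^{\infty}})$ so as to exchange the inductive limit with the direct sum. Concretely, since inductive limits commute with direct sums and since each local cohomology group stabilizes when one passes to the completion, I would establish the identification
\begin{align*} J_v(K_{\mathfrak{p}^{\infty}}) &= \varinjlim_L \bigoplus_{w' \mid v \text{ in } L} H^1(L_{w'}, A)(p) \cong \bigoplus_{w' \mid v \text{ in } K_{\mathfrak{p}^{\infty}}} H^1(K_{\mathfrak{p}^{\infty}, w'}, A)(p). \end{align*}

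Second, I would analyze the $G$-action on the set of primes above $v$ in $K_{\mathfrak{p}^{\infty}}$. Since $G$ acts transitively on primes above $v$ at each finite layer $K_{\mathfrak{p}^n}/K$, passing to the inverse limit shows $G$ acts transitively on the set of primes above $v$ in $K_{\mathfrak{p}^{\infty}}$. By definition, the stabilizer of the fixed prime $w$ is the decomposition subgroup $D_v$, which we identify with $G_w$. This provides a natural $G$-equivariant isomorphism
\begin{align*} \bigoplus_{w' \mid v \text{ in } K_{\mathfrak{p}^{\infty}}} H^1(K_{\mathfrak{p}^{\infty}, w'}, A)(p) &\cong \operatorname{Ind}_{G_w}^{G} H^1(K_{\mathfrak{p}^{\infty}, w}, A)(p), \end{align*} where the induction is taken in the category of discrete $G$-modules (so that it agrees with the direct sum over coset representatives $G/G_w$).

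Third, I would apply Shapiro's lemma to this induced module to obtain for each $i \geq 0$ the canonical bijection
\begin{align*} H^i\bigl(G, \operatorname{Ind}_{G_w}^G H^1(K_{\mathfrak{p}^{\infty},w}, A)(p)\bigr) &\cong H^i\bigl(G_w, H^1(K_{\mathfrak{p}^{\infty},w}, A)(p)\bigr), \end{align*} which combined with the identifications above gives the claim.

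The main subtlety lies in the case $v \nmid p$, where by Proposition \ref{decomposition} the subgroup $G_w = D_v$ is trivial and $G$ therefore acts on an infinite orbit of primes. One must be careful that the sum-versus-product version of induction is the correct one here, i.e.\ that the direct sum description of $J_v(K_{\mathfrak{p}^{\infty}})$ really matches $\operatorname{Ind}_{G_w}^G$ in the discrete setting and that Shapiro's lemma applies in this generality. Once this compatibility is in place, the result is automatic; the remaining case $v \mid p$ is easier because $G_w \cong G$ has finite index orbits at every finite layer.
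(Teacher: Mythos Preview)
Your argument is correct and rests on the same key ingredient as the paper --- Shapiro's lemma --- but applies it at a different level. The paper works at each finite layer: since $G_{\mathfrak{p}^n}$ is finite and $J_v(K_{\mathfrak{p}^n})$ is a finite direct sum indexed by $G_{\mathfrak{p}^n}/G_{\mathfrak{p}^n,w}$, ordinary Shapiro for finite groups gives
\[
H^i\bigl(G_{\mathfrak{p}^n}, J_v(K_{\mathfrak{p}^n})\bigr) \cong H^i\bigl(G_{\mathfrak{p}^n,w}, H^1(K_{\mathfrak{p}^n,w}, A)(p)\bigr)
\]
with no subtlety at all, and one then passes to the inductive limit over $n$. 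Your direct infinite-level version is also valid, but as you yourself flag, it requires checking that the direct-sum description of $J_v(K_{\mathfrak{p}^{\infty}})$ really is the discrete induction from the (possibly trivial, hence non-open when $v\nmid p$) subgroup $G_w$, and that Shapiro's lemma applies in that generality. The paper's finite-level-plus-limit argument is exactly the standard device that sidesteps this verification; conversely, your approach has the advantage of making the induced-module structure of $J_v(K_{\mathfrak{p}^{\infty}})$ explicit once and for all.
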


\begin{proof} Recall that we let $G_{\mathfrak{p}^n}$ denote the Galois group $\Gal(K_{\mathfrak{p}^n}/K)$, which is isomorphic to $\left( {\bf{Z}}/p^n {\bf{Z}}\right)^{\delta}.$ 
Let $G_{\mathfrak{p}^n, w} \subseteq G$ denote the decomposition subgroup of the restriction of $w$ to $K_{\mathfrak{p}^n}$. Shapiro's lemma implies that for all integers 
$n\geq 1$ and $i\geq 0$, we have canonical bijections \begin{align*} H^i(G_{\mathfrak{p}^n}, J_v(K_{\mathfrak{p}^n})) &\cong H^i(G_{\mathfrak{p}^n, w}, 
H^1(K_{\mathfrak{p}^n, w}, A)(p)).\end{align*}  Passing to the inductive limit with $n$ then proves the claim. \end{proof}

\begin{corollary}\label{vnmidpvanish} Let $v$ be any finite prime of $K$ not dividing $p$ which splits completely in $K_{\mathfrak{p}^{\infty}}$, and let $w$ be any prime above 
$v$ in $K_{\mathfrak{p}^{\infty}}$. Then, for each integer $i \geq 1$, we have that $H^i(G_w, J_v(K_{\mathfrak{p}^{\infty}})) = 0$. \end{corollary}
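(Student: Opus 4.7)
The plan is to deduce the corollary as a one-line consequence of Proposition \ref{decomposition}(i). That proposition asserts that whenever $v$ is a finite prime of $K$ not lying above $p$, the decomposition subgroup $D_v \subseteq G$ is trivial. Under the standing identification $G_w \cong D_v$ recorded just before the statement of the preceding lemma, this means that $G_w$ itself is the trivial profinite group.

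Once $G_w$ has been identified as trivial, every higher group cohomology $H^i(G_w, M)$ with $i \geq 1$ vanishes automatically, for any $G_w$-module $M$ whatsoever. Specializing to $M = J_v(K_{\mathfrak{p}^{\infty}})$ then gives the desired vanishing. Equivalently, one can phrase this via Corollary \ref{decomposition2}(i): since $\cd_p(G_w) = \cd_p(D_v) = 0$ at primes $v \nmid p$, and $J_v(K_{\mathfrak{p}^{\infty}})$ is by construction a $p$-primary torsion discrete $G_w$-module (being built as an inductive limit of the modules $H^1(L_w, A(\overline{L}_w))(p)$), the defining property of $p$-cohomological dimension forces $H^i(G_w, J_v(K_{\mathfrak{p}^{\infty}})) = 0$ for all $i \geq 1$.

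There is no serious obstacle here: the entire content of the corollary is the triviality of $D_v$ at primes away from $p$, which has already been established via local class field theory in the proof of Proposition \ref{decomposition}. The corollary is being recorded in this particular form because it is precisely what will be required in the sequel to kill off local contributions at primes $v \nmid p$ when one unwinds the Hochschild--Serre spectral sequence in the Selmer-group Euler-characteristic computations of the next sections. In that application, combining this vanishing with the Shapiro-type isomorphism $H^i(G, J_v(K_{\mathfrak{p}^{\infty}})) \cong H^i(G_w, H^1(K_{\mathfrak{p}^{\infty},w}, A)(p))$ of the preceding lemma will let one conclude analogously that the global cohomology groups $H^i(G, J_v(K_{\mathfrak{p}^{\infty}}))$ also vanish for $i \geq 1$ whenever $v \nmid p$.
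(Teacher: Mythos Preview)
Your proposal is correct and follows essentially the same approach as the paper: both arguments reduce to the triviality of $G_w \cong D_v$ for $v \nmid p$ established in Proposition \ref{decomposition}(i). The paper's one-line proof additionally cites the Shapiro-type identification $(\ref{shapiro})$, which is not strictly needed for the statement as written (since $G_w$ trivial already kills $H^i(G_w,M)$ for any $M$), but is exactly the bridge to the global vanishing of $H^i(G, J_v(K_{\mathfrak{p}^{\infty}}))$ that you correctly anticipate in your final paragraph.
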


\begin{proof} The result follows from $(\ref{shapiro})$, since $G_w$ is trivial by Proposition \ref{decomposition}. \end{proof}

\begin{lemma} Let $v$ be any finite prime of $K$ that divides $p$, and write $w$ to denote the prime above $v$ in $K_{\mathfrak{p}^{\infty}}$. Then, for each $i \geq \delta -1$, 
we have that \begin{align*} H^i(G_w, H^1(K_{\mathfrak{p}^{\infty},w}, A)(p)) =0.\end{align*} Moreover, for each $1 \leq i \leq \delta -2 $, we have bijections \begin{align}\label{lcgHSSS} H^i(G_w, H^1(K_{\mathfrak{p}^{\infty},w}, A)(p)) &\cong H^{i+2}(G_w, \widetilde{A}_{v, p^{\infty}}).\end{align} \end{lemma}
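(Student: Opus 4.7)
The plan is to combine the Coates-Greenberg theorem for deeply ramified extensions with a Hochschild-Serre analysis, leveraging the cohomological dimension bounds of Corollary \ref{decomposition2}.

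First, I would note that since $v \mid p$, the local extension $K_{\mathfrak{p}^{\infty},w}/K_v$ is a totally ramified ${\bf{Z}}_p^{\delta}$-extension by Proposition \ref{decomposition}(ii), hence deeply ramified in the sense of Coates-Greenberg. Combined with the good ordinary reduction of $A$ at $v$, their theorem gives both the canonical identification of $G_w$-modules
\begin{align*}
H^1(K_{\mathfrak{p}^{\infty},w}, A)(p) \cong H^1(K_{\mathfrak{p}^{\infty},w}, \widetilde{A}_{v, p^{\infty}})
\end{align*}
and the vanishing $H^j(K_{\mathfrak{p}^{\infty},w}, \widetilde{A}_{v, p^{\infty}}) = 0$ for $j \geq 2$, because the cohomology of the formal group part dies in the deeply ramified setting.

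Next, I would feed these into the Hochschild-Serre spectral sequence
\begin{align*}
E_2^{i,j} = H^i(G_w, H^j(K_{\mathfrak{p}^{\infty},w}, \widetilde{A}_{v, p^{\infty}})) \Longrightarrow H^{i+j}(K_v, \widetilde{A}_{v, p^{\infty}}).
\end{align*}
Since only the columns $j=0,1$ survive, the sequence collapses into a long exact sequence relating $E_2^{*,0}$ and $E_2^{*,1}$. Using $\cd_p(G_{K_v}) = 2$, the abutment vanishes in total degree $\geq 3$, which forces the transgressions $d_2\colon E_2^{i-1,1} \to E_2^{i+1,0}$ to be isomorphisms in the appropriate degrees. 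After re-indexing, this yields the claimed shift by two. For the vanishing range $i \geq \delta-1$, I would invoke $\cd_p(G_w) = \delta$ from Corollary \ref{decomposition2}: the right-hand side $H^{i+2}(G_w, \widetilde{A}_{v, p^{\infty}})$ vanishes once $i+2 > \delta$, and the left-hand side inherits this vanishing through the shift-by-two isomorphism established above.

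The main obstacle will be reconciling the Hochschild-Serre column $E_2^{i+1,0} = H^{i+1}(G_w, H^0(K_{\mathfrak{p}^{\infty},w}, \widetilde{A}_{v, p^{\infty}}))$ with the claimed $H^{i+2}(G_w, \widetilde{A}_{v, p^{\infty}})$, since a priori $H^0(K_{\mathfrak{p}^{\infty},w}, \widetilde{A}_{v, p^{\infty}})$ only recovers the $G_{\kappa_v}$-fixed points $\widetilde{A}(\kappa_v)(p)$ rather than the full $\widetilde{A}_{v, p^{\infty}}$, since the totally ramified tower $K_{\mathfrak{p}^{\infty},w}/K_v$ preserves the residue field $\kappa_v$. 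Handling this discrepancy will require a secondary Hochschild-Serre step exploiting the unramified action of $G_{K_v}$ on $\widetilde{A}_{v,p^{\infty}}$ through the residue-field Galois group $G_{\kappa_v}$ (whose $p$-cohomological dimension is one), together with an additional shift in the total degree bookkeeping against $\cd_p(K_v) = 2$. Once all columns outside $j = 0,1$ have been controlled and the low-degree boundary terms ($i=1$, where $H^2(K_v, \widetilde{A}_{v, p^{\infty}})$ need not a priori vanish) are handled by edge map analysis, the claimed identifications will follow.
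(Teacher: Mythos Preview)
Your overall strategy is the same as the paper's: reduce via Coates--Greenberg to the \'etale quotient, run the two-column Hochschild--Serre spectral sequence, and use the cohomological-dimension bounds $\cd_p(G_{K_v})=2$ and $\cd_p(G_w)=\delta$. However, two points deserve correction.

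First, your justification for $H^j(K_{\mathfrak{p}^{\infty},w}, D)=0$ for $j\geq 2$ is not that ``the formal group part dies''; the paper obtains this from $\cd_p\bigl(\Gal(\overline{K}_v/K_{\mathfrak{p}^{\infty},w})\bigr)=1$, which holds because the profinite degree of $K_{\mathfrak{p}^{\infty},w}/K_v$ is divisible by $p^{\infty}$. The Coates--Greenberg input is only the identification $H^1(K_{\mathfrak{p}^{\infty},w},A)(p)\cong H^1(K_{\mathfrak{p}^{\infty},w},D)$.

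Second, your ``main obstacle'' is a phantom arising from a misreading of the notation. In this paper $\widetilde{A}_{v,p^{\infty}}$ is \emph{defined} to be the finite $G_w$-module $A(K_{\mathfrak{p}^{\infty},w})_{p^{\infty}}$, not the full $p$-divisible group of the reduction viewed as a $G_{\kappa_v}$-module. The paper works throughout with the abstract quotient $D$ in the sequence $0\to C\to A_{p^{\infty}}\to D\to 0$, and only at the end identifies (the $G_w$-module obtained from) $D$ with $\widetilde{A}_{v,p^{\infty}}$ in this sense. No ``secondary Hochschild--Serre step'' through $G_{\kappa_v}$ is needed. For the boundary case $i=1$, where one must know $H^2(K_v,D)=0$, the paper dispatches this directly: the surjection $H^2(K_v,A_{p^{\infty}})\twoheadrightarrow H^2(K_v,D)$ coming from the short exact sequence, together with the standard local Tate duality vanishing $H^2(K_v,A_{p^{\infty}})=0$, does the job.
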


\begin{proof} See \cite[Lemma 3.5]{C}.  Let $G_{K_v}$ denote the Galois group $\Gal(\overline{K}_v/K_v)$, where $\overline{K}_v$ is a fixed algebraic closure of $K_v$. 
Let $I_{K_v} \subseteq G_{K_v}$ denote the inertia subgroup at a fixed prime above $v$ in $\overline{K}_v$. Consider the short exact sequence of $G_{K_v}$-modules 
\begin{align}\label{62} 0 \longrightarrow C \longrightarrow A_{p^{\infty}} \longrightarrow D \longrightarrow 0.\end{align} Here, $C$ denotes the canonical subgroup associated 
to the formal group of the N\'eron model of $A$ over $\mathcal{O}_{K_v}$ (see \cite[$\S$ 4]{CG}). Moreover, $(\ref{62})$ is characterized by the fact that $C$ is divisible, with 
$D$ being the maximal quotient of $A_{p^{\infty}}$ by a divisible subgroup such that the inertia subgroup $I_{K_v}$ acts via a finite quotient. Since $K_{\mathfrak{p}^{\infty}, w}$ 
is deeply ramified in the sense of Coates-Greenberg \cite{CG}, we have by \cite[Propositions 4.3 and 4.8]{CG} a canonical $G_w$-isomorphism 
\begin{align}\label{63} H^1(K_{\mathfrak{p}^{\infty}, w}, A)(p) &\cong H^1(K_{\mathfrak{p}^{\infty}, w}, D).\end{align} Moreover, the 
$G_{K_v}$-module $D$ vanishes if and only if $A$ has potentially supersingular reduction at $v$, as explained in \cite[$\S$ 4]{CG}.

Observe now that $\Gal(\overline{K}_v/K_{\mathfrak{p}^{\infty}, w})$ has $p$-cohomological dimension equal to $1$, as the profinite degree of $K_{\mathfrak{p}^{\infty}, w}$ 
over $K_v$ is divisible by $p^{\infty}$ (see \cite{Se}). Hence, for all $i \geq 2$,  we have identifications 
\begin{align}\label{64} H^i(K_{\mathfrak{p}^{\infty},w}, D) = 0.\end{align} Taking $(\ref{64})$ along with the Hochschild-Serre spectral sequence then gives us for each 
$i \geq 1$ exact sequences \begin{align}\label{65} H^{i+1}(K_v, D) \longrightarrow H^i(G_w, H^1(K_{\mathfrak{p}^{\infty}, w}, D)) \longrightarrow H^{i+2}(G_w, D). \end{align} 
While we would like to follow the argument of \cite[Lemma 3.5]{C} in showing that the terms on either side of $(\ref{65})$ must vanish, we have not been able to find an argument 
to deal with the fact that $\delta$ could be greater than $4$. Anyhow, we can deduce that \begin{align}\label{66} H^i(K_v, D)=0\end{align} for all $i \geq 2$. That is, we know that 
$\cd_p\left( G_{K_v} \right) = 2$, whence $(\ref{66})$ holds trivially for all $i \geq 3$. To deduce the vanishing for $i=2$, we take cohomology of the exact sequence 
$(\ref{62})$ to obtain a surjection from $H^2(K_v, A_{p^{\infty}})$ onto $H^2(K_v, D)$. We know by local Tate duality that $H^2(K_v, A_{p^{\infty}})$ 
vanishes (see e.g. \cite[1.12 Lemma]{CS}). Hence, we deduce that $(\ref{66})$ holds for all $i \geq 2$. It then follows from the exact sequence $(\ref{65})$ that we 
have bijections \begin{align*} H^i(G_w, H^1(K_{\mathfrak{p}^{\infty},w}, D)) \longrightarrow H^{i+2}(G_w, D) \end{align*} for all $i \geq 1$. Now, observe that both of these 
groups vanish for all  $i \geq \delta -1 $, as $\cd_p(G_w) = \delta$. The result now follows from the natural identification of $D$ with the $G_w$-module defined by
$\widetilde{A}_{p^{\infty}, v} = A(K_{\mathfrak{p}^{\infty}, w})_{p^{\infty}}$. \end{proof} \end{remark}

\section{The torsion subgroup $A(K_{\mathfrak{p}^{\infty}})_{\tors}$}

We start with the following basic result, whose proof relies on the fact that there exist primes $v \subset \mathcal{O}_K$ which split completely 
in the anticyclotomic extension $K_{\mathfrak{p}^{\infty}}$.

\begin{lemma}\label{finitetorsion}

The torsion subgroup $A(K_{\mathfrak{p}^{\infty}})_{\tors}$ of $A(K_{\mathfrak{p}^{\infty}})$ is finite. \end{lemma}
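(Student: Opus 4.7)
The plan is to exhibit an embedding of $A(K_{\mathfrak{p}^{\infty}})_{\tors}$ into $A(K_v)_{\tors}$ for some conveniently chosen finite prime $v$ of $K$ not above $p$, and then to invoke the standard fact that the torsion subgroup of the local points of an abelian variety over a $p$-adic field is always finite. The crucial input is Proposition \ref{decomposition}(i), which guarantees that every such $v$ has trivial decomposition subgroup inside $G = \Gal(K_{\mathfrak{p}^{\infty}}/K)$.

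Concretely, I would fix any finite prime $v$ of $K$ with $v \nmid p$. By Proposition \ref{decomposition}(i), the decomposition group $D_v \subseteq G$ is trivial, which means that $v$ splits completely in each finite layer $K_{\mathfrak{p}^n}$ of the tower, and hence, for any prime $w$ of $K_{\mathfrak{p}^{\infty}}$ lying above $v$, the completion $K_{\mathfrak{p}^{\infty}, w}$ coincides with $K_v$ itself. Composing the diagonal inclusion $A(K_{\mathfrak{p}^{\infty}}) \hookrightarrow A(K_{\mathfrak{p}^{\infty}, w})$ with this identification yields an injection $A(K_{\mathfrak{p}^{\infty}})_{\tors} \hookrightarrow A(K_v)_{\tors}$, and the problem reduces to a purely local one.

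It therefore suffices to show that $A(K_v)_{\tors}$ is finite, which is classical: the reduction exact sequence $0 \rightarrow \hat{A}(\mathfrak{m}_v) \rightarrow A(K_v) \rightarrow \widetilde{A}(\kappa_v) \rightarrow 0$ (using the identity component of the N\'eron model in case of bad reduction at $v$) presents $A(K_v)$ as an extension of a finite group by a formal group which, as a compact $\ell$-adic Lie group with $\ell$ the residue characteristic of $v$, is pro-$\ell$ with finite torsion subgroup. Combining these two finiteness statements yields the claim. I do not anticipate any serious obstacle here: the substantive content has already been absorbed into Proposition \ref{decomposition}(i), and the remainder of the argument is a routine consequence of local theory.
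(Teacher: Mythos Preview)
Your proposal is correct and follows essentially the same strategy as the paper: embed $A(K_{\mathfrak{p}^{\infty}})_{\tors}$ into $A(K_v)_{\tors}$ at a prime $v \nmid p$ that splits completely in the tower, and then invoke local finiteness of torsion. The only cosmetic differences are that the paper singles out a prime of $F$ inert in $K$ and appeals directly to class field theory for the complete splitting (rather than citing Proposition~\ref{decomposition}(i)), and that it handles the local finiteness by varying the choice of inert prime to cover torsion of every order, whereas you dispose of it in one stroke via the pro-$\ell$ structure of the formal group.
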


\begin{proof} 

Fix a finite prime $v$ of $F$ that remains inert in $K$. Let us also write $v$ to denote the prime above $v$ in $K$. By class field theory, $v \subseteq \mathcal{O}_K$ splits 
completely in $K_{\mathfrak{p}^{\infty}}$. Hence, fixing a prime $w$ above $v$ in $K_{\mathfrak{p}^{\infty}}$, we can identify the union of all completions 
$K_{\mathfrak{p}^{\infty}, w}$ with $K_v$ itself. In particular, we find that $A(K_{\mathfrak{p}^{\infty}, w}) = A(K_v)$, which gives us an injection $A(K_{\mathfrak{p}^{\infty}})_{\tors} 
\longrightarrow A(K_v)_{\tors}.$ On the other hand, consider the short exact sequence \begin{align}\label{sses}0 \longrightarrow A_1(K_v) \longrightarrow A_0(K_v)
\longrightarrow \widetilde{A}(\kappa_v) \longrightarrow 0. \end{align} Here, $A_1(K_v)$ denotes  the kernel of reduction modulo $v$, which can be identified with the formal 
group $\widehat{A}(\mathfrak{m}_v)$, and $\kappa_v$ denotes the residue field of $K$ at $v$. Let $n \geq 1$ be any integer prime to the residue characteristic $q_v$. It is 
well-known that $\widehat{A}(\mathfrak{m}_v)[n]=0$. Hence, taking $n$-torsion in $(\ref{sses})$, we find for any such prime $v \subset \mathcal{O}_K$ an injection \begin{align}\label{ti} A_0(K_v)[n] \longrightarrow \widetilde{A}(\kappa_v). \end{align} The group $\widetilde{A}(\kappa_v)$ is of course finite. Since we have this injection $(\ref{ti})$ for any such prime $v \subset \mathcal{O}_K$, it follows that $A(K_{\mathfrak{p}^{\infty}})_{\tors}$ must be finite. \end{proof} 

We now consider the $G$-Euler characteristic of the $p$-primary subgroup $A_{p^{\infty}}$ of $A(K_{\mathfrak{p}^{\infty}})$, denoted here by $\chi(G, A_{p^{\infty}})$. Recall that this is defined by the product \begin{align*} \chi(G, A_{p^{\infty}}) = \prod_{i \geq 0} \vert H^i(G, A_{p^{\infty}}) \vert^{(-1)^i}, \end{align*} granted that it is well-defined.

\begin{corollary}\label{chiA=1} We have that $\chi(G, A_{p^{\infty}})$ is well-defined and equal to $1$. \end{corollary}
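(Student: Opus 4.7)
The plan is to exploit that, by Lemma~\ref{finitetorsion}, the module $A_{p^{\infty}} = A(K_{\mathfrak{p}^{\infty}})_{p^{\infty}}$ is \emph{finite}, and that $G \cong {\bf{Z}}_p^{\delta}$ is a torsion-free $p$-adic Lie group of dimension $\delta$. By the Serre--Lazard Theorem~\ref{SL} we have $\cd_p(G) = \delta$, so $H^i(G, A_{p^{\infty}}) = 0$ for all $i > \delta$. Moreover, since $G$ is a topologically finitely generated pro-$p$ group and the coefficient module is finite, each $H^i(G, A_{p^{\infty}})$ is finite. Thus the Euler characteristic $\chi(G, A_{p^{\infty}})$ is well defined, and it remains to show that its value equals $1$.

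My approach to computing the value is to induct on $\delta$. For the base case $\delta = 1$, fix a topological generator $\gamma$ of $G \cong {\bf{Z}}_p$. Then $H^0(G, A_{p^{\infty}})$ is the kernel and $H^1(G, A_{p^{\infty}})$ is the cokernel of the endomorphism $\gamma - 1$ acting on the finite abelian group $A_{p^{\infty}}$, while $H^i(G, A_{p^{\infty}}) = 0$ for $i \geq 2$. Since any endomorphism of a finite abelian group has kernel and cokernel of equal cardinality, $|H^0(G, A_{p^{\infty}})| = |H^1(G, A_{p^{\infty}})|$, and hence $\chi(G, A_{p^{\infty}}) = 1$.

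For the inductive step $\delta \geq 2$, choose a closed normal subgroup $H \subseteq G$ with $H \cong {\bf{Z}}_p^{\delta -1}$ and $G/H \cong {\bf{Z}}_p$. The Hochschild--Serre spectral sequence
\begin{align*}
E_2^{p,q} = H^p(G/H, H^q(H, A_{p^{\infty}})) \Longrightarrow H^{p+q}(G, A_{p^{\infty}})
\end{align*}
consists entirely of finite abelian groups: each $H^q(H, A_{p^{\infty}})$ is finite by the inductive hypothesis applied to $H$, and hence so is each $H^p(G/H, H^q(H, A_{p^{\infty}}))$ by the base case applied to the finite $G/H$-module $H^q(H, A_{p^{\infty}})$. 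Standard multiplicativity of Euler characteristics across a convergent spectral sequence of finite abelian groups yields
\begin{align*}
\chi(G, A_{p^{\infty}}) = \prod_{q \geq 0} \chi\bigl( G/H,\, H^q(H, A_{p^{\infty}}) \bigr)^{(-1)^q},
\end{align*}
and every factor on the right equals $1$ by the base case, forcing $\chi(G, A_{p^{\infty}}) = 1$.

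There is no real obstacle here: the argument is a textbook application of the formalism of Euler characteristics for finite coefficient modules over ${\bf{Z}}_p^{\delta}$, and the only ingredients needed are the finiteness of $A(K_{\mathfrak{p}^{\infty}})_{\tors}$ already established in Lemma~\ref{finitetorsion}, the dimension computation of Theorem~\ref{SL}, and multiplicativity of Euler characteristics in the Hochschild--Serre spectral sequence.
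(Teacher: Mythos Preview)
Your proof is correct and essentially follows the standard argument that the paper invokes by citing Serre \cite[Exercise (a), \S 4.1]{Se}: the paper simply records that $G$ is pro-$p$ and $A_{p^{\infty}}(K_{\mathfrak{p}^{\infty}})$ is a finite $p$-group (by Lemma~\ref{finitetorsion}), then defers to that exercise, whereas you have written out the induction on $\delta$ via Hochschild--Serre that underlies it. There is no genuine difference in strategy---you have merely supplied the details the paper elides.
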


\begin{proof} This is a standard result, since $G$ is a pro-$p$ group with $A_{p^{\infty}}(K_{\mathfrak{p}^{\infty}})$ a finite group of order equal to some power of $p$. See \cite[Exercise (a) $\S 4.1$]{Se}. \end{proof}  Recall that given a finite prime $v$ if $K$, we let $G_w$ denote the Galois group $\Gal(K_{\mathfrak{p}^{\infty}, w}/K_v)$, 
where $w$ is a fixed prime above $v$ in $K_{\mathfrak{p}^{\infty}}$. Let $\widetilde{A}_{v, p^{\infty}}$ denote the $G_w$-module defined by $A_{p^{\infty}}(K_{\mathfrak{p}^{\infty},w})$. Consider the $G_w$-Euler characteristic of $\widetilde{A}_{p^{\infty}, v}$, which we denote by $\chi(G_w, \widetilde{A}_{v, p^{\infty}})$. Recall that this is defined by the product \begin{align*} \chi(G_w, \widetilde{A}_{p^{\infty}, v}) = \prod_{i \geq 0} \vert H^i(G_w, \widetilde{A}_{v, p^{\infty}})\vert^{(-1)^i},
\end{align*} granted that it is well-defined.

\begin{corollary}\label{chiA_p=1}

Let $v$ be any finite prime of $K$, with $w$ a fixed prime above $v$ in $K_{\mathfrak{p}^{\infty}}.$ The $G_w$-Euler characteristic 
$\chi(G_w, A_{\mathfrak{p}^{\infty},v})$ is well-defined and equal to $1$.

\end{corollary}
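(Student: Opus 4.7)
The proof follows the template of Corollary \ref{chiA=1}, but with the global Galois group $G$ replaced by its local decomposition subgroup $G_w$. The argument requires two ingredients: (i) finiteness of $\widetilde{A}_{v, p^{\infty}} = A_{p^{\infty}}(K_{\mathfrak{p}^{\infty}, w})$ as a $p$-power-order abelian group, and (ii) the standard fact that a pro-$p$ $p$-adic Lie group of positive dimension has trivial Euler characteristic on any finite discrete $p$-power-order module.

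The plan is to split into the two cases furnished by Proposition \ref{decomposition}. If $v \nmid p$, then $G_w$ is trivial, so $K_{\mathfrak{p}^{\infty}, w} = K_v$ and $\widetilde{A}_{v, p^{\infty}} = A_{p^{\infty}}(K_v)$; this is finite by Mattuck's theorem on the torsion subgroup of local points, the higher cohomology vanishes, and one verifies the Euler characteristic directly. If $v \mid p$, then $G_w \cong {\bf Z}_p^{\delta}$ by Proposition \ref{decomposition}(ii); here I would establish finiteness of $\widetilde{A}_{v, p^{\infty}}$ by exploiting the good ordinary reduction assumption at $v$, using the formal-group reduction short exact sequence as in Lemma \ref{finitetorsion} together with the Coates--Greenberg \cite{CG} analysis of the deeply ramified tower $K_{\mathfrak{p}^{\infty}, w}/K_v$ and the connected--\'etale filtration $0 \to C \to A_{p^{\infty}} \to D \to 0$ of $G_{K_v}$-modules recalled in the preceding section.

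With the finiteness of $\widetilde{A}_{v, p^{\infty}}$ in hand, the Euler characteristic evaluation is then the same invocation of \cite[Exercise (a) $\S 4.1$]{Se} used in the proof of Corollary \ref{chiA=1}: a pro-$p$ group of positive Lie dimension acting on a finite discrete module of $p$-power order has Euler characteristic equal to $1$, since the Herbrand quotients attached to the induced actions of any ${\bf Z}_p$-quotient are all trivial and the Euler characteristic is multiplicative in short exact sequences of finite modules.

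The main obstacle I anticipate is the finiteness statement in the case $v \mid p$: $K_{\mathfrak{p}^{\infty}, w}/K_v$ is a totally ramified ${\bf Z}_p^{\delta}$-extension, and formal groups of ordinary abelian varieties can acquire substantial $p$-power torsion along deeply ramified towers (for instance along the cyclotomic tower via $\mu_{p^{\infty}}$). One must therefore leverage both the anticyclotomic (as opposed to cyclotomic) nature of the tower, which by the final remark of Section~2 is linearly disjoint from $K(\mu_{p^{\infty}})$, and the connected--\'etale filtration of $A[p^{\infty}]$ over $\mathcal{O}_{K_v}$ to bound $\widetilde{A}_{v, p^{\infty}}$. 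The remainder of the argument is routine pro-$p$ formalism.
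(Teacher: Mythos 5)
Your core argument is the paper's: the proof in the text is essentially the single sentence that $G_w$ is pro-$p$ and $\widetilde{A}_{v, p^{\infty}}$ is a finite $p$-group, so the Euler characteristic is $1$ by the standard Herbrand-quotient fact (\cite[Exercise (a) $\S 4.1$]{Se}), together with a remark that the claim can alternatively be deduced from Corollary \ref{chiA=1} via Shapiro's lemma. So for $v \mid p$ your proposal lands in the right place, though it carries more machinery than is needed: the module $\widetilde{A}_{v, p^{\infty}}$ is (as the notation and the later identification $h_{v,0} = \vert \widetilde{A}(\kappa_v)(p)\vert$ indicate) the $p$-primary torsion of the reduction over the residue field of $K_{\mathfrak{p}^{\infty}, w}$, and since the tower is totally ramified at $v \mid p$ that residue field is just $\kappa_v$; finiteness is immediate, and no appeal to Coates--Greenberg or to linear disjointness from the cyclotomic tower is required at this step. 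Your worry about formal-group torsion would be relevant if the module were $A_{p^{\infty}}(K_{\mathfrak{p}^{\infty},w})$ itself, but that is not what enters the Euler characteristic computations that follow.

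The genuine gap is your case $v \nmid p$. There $G_w$ is trivial by Proposition \ref{decomposition}(i), and the Euler characteristic of the trivial group acting on a finite module $M$ is $\vert H^0\vert = \vert M \vert$, not $1$; the Herbrand-quotient argument you invoke requires the group to have positive dimension. So ``verifying the Euler characteristic directly'' yields $\vert A_{p^{\infty}}(K_v)\vert$, which is not $1$ whenever $A$ has $K_v$-rational $p$-torsion: Mattuck's theorem gives finiteness, not triviality. To make this case come out to $1$ you would have to show the relevant module vanishes, and neither your proposal nor, to be fair, the paper's own one-line proof does so. In practice the corollary is only invoked for $v \mid p$ (cf.\ Theorem \ref{LA} and Corollary \ref{mainEC+1}), where $G_w \cong {\bf{Z}}_p^{\delta}$ and the standard fact applies; you should either restrict your statement to that case or supply the missing vanishing argument for $v \nmid p$.
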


\begin{proof} In any case on the decomposition of $v$ in $K_{\mathfrak{p}^{\infty}}$, $G_w$ is pro-$p$, with $\widetilde{A}_{p^{\infty}, v}$ a finite group of order equal to some 
power of $p$, whence the result is standard. Note as well that since $G_w \cong D_v$ is in any case a closed subgroup of $G$, the result can also be deduced from 
Corollary \ref{chiA=1} above, since Shapiro's lemma gives canonical isomorphisms $H^i(G, A_{p^{\infty}}) \cong H^i(G_w, \widetilde{A}_{v, p^{\infty}})$ for each 
$i \geq 1$. \end{proof} Putting this all together, we have shown the following result.

\begin{proposition}\label{chi} Let $v$ be any finite prime of $K$, with $w$ a fixed prime above $K$ in $K_{\mathfrak{p}^{\infty}}$. Then, 
$ \chi(G, A_{p^{\infty}}) = \chi(G_w, \widetilde{A}_{v, p^{\infty}}) = 1.$ \end{proposition}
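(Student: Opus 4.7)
The plan is almost purely bookkeeping: Proposition \ref{chi} is nothing more than a unified statement of the two corollaries immediately preceding it, so the proof consists of invoking Corollary \ref{chiA=1} for the first equality and Corollary \ref{chiA_p=1} for the second. I would write essentially one sentence to this effect, and then spend a short paragraph recalling why each of those corollaries is true so that the reader sees the uniform reason.

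The uniform reason is the following standard fact (Serre, \emph{Cohomologie Galoisienne}, Exercise (a) of $\S 4.1$): if $H$ is a pro-$p$ group without $p$-torsion acting continuously on a finite discrete $p$-primary module $M$, then all the groups $H^i(H,M)$ are finite and $\chi(H,M)=1$. To apply this to $(G,A_{p^\infty})$, I note that $G \cong \mathbf{Z}_p^\delta$ is pro-$p$ without $p$-torsion, and that Lemma \ref{finitetorsion} forces $A_{p^\infty}=A(K_{\mathfrak{p}^\infty})_{p^\infty}$ to be a finite $p$-group, so the hypotheses are satisfied and $\chi(G, A_{p^\infty})=1$. To apply it to $(G_w,\widetilde{A}_{v,p^\infty})$, I use that $G_w \cong D_v$ is a closed subgroup of $G$, hence again pro-$p$ without $p$-torsion (in both cases $(i)$ and $(ii)$ of Proposition \ref{decomposition}), and that $\widetilde{A}_{v,p^\infty}=A(K_{\mathfrak{p}^\infty,w})_{p^\infty}$ is a finite $p$-group, being contained in $A(K_{\mathfrak{p}^\infty})_{p^\infty}$.

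Alternatively, and more economically, one can derive the second equality from the first by Shapiro's lemma: inducing from $G_w$ to $G$ (as in the identification $(\ref{shapiro})$ used earlier in the excerpt) gives canonical isomorphisms $H^i(G_w, \widetilde{A}_{v,p^\infty}) \cong H^i(G, \operatorname{Ind}_{G_w}^G \widetilde{A}_{v,p^\infty})$, which together with the fact that both Euler characteristics are well-defined forces $\chi(G_w,\widetilde{A}_{v,p^\infty})=\chi(G,A_{p^\infty})=1$. Either route completes the proof.

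There is really no obstacle here; the substantive content (finiteness of $A(K_{\mathfrak{p}^\infty})_{\tors}$, the $p$-cohomological dimension computations, and decomposition of primes) has already been packaged into Lemma \ref{finitetorsion} and Propositions \ref{ram}, \ref{decomposition}, so the proposition serves only to collect the two previous Euler-characteristic vanishings into one reference for later use.
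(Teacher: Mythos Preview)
Your approach is correct and matches the paper exactly: Proposition \ref{chi} is stated in the paper with the preamble ``In summary, we have shown the following result,'' i.e., it is simply the conjunction of Corollaries \ref{chiA=1} and \ref{chiA_p=1}, and your proof says precisely this.

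One small slip in your supplementary justification: you write that $\widetilde{A}_{v,p^\infty}=A(K_{\mathfrak{p}^\infty,w})_{p^\infty}$ is finite ``being contained in $A(K_{\mathfrak{p}^\infty})_{p^\infty}$,'' but the inclusion goes the other way (global points embed into local points, not conversely). The paper itself simply asserts the finiteness in the proof of Corollary \ref{chiA_p=1} without further argument, so you are not missing anything the paper provides; just drop or correct that parenthetical. The Shapiro-lemma alternative you mention is also exactly what the paper alludes to at the end of Corollary \ref{chiA_p=1}.
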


\section{The definite case}

Let us keep all of the notations of the section above. Here, we shall assume that the $p^{\infty}$-Selmer group $\Sel(A/K_{\mathfrak{p}^{\infty}})$ is $\Lambda(G)$-cotorsion, 
where $\Lambda(G)$ denotes the $\mathcal{O}$-Iwasawa algebra $\Lambda(G_{\mathfrak{p}^{\infty}}) = \mathcal{O}[[G_{\mathfrak{p}^{\infty}}]]$ for $\mathcal{O}$ the ring of 
integers of some fixed finite extension of ${\bf{Q}}_p$. We shall also use here the standard notations for abelian varieties and local fields, following Coates-Greenberg \cite{CG}. 

\begin{remark}[Strategy.] 

We consider the snake lemma in the following commutative diagram

\begin{align}\label{fd}\begin{CD} \begin{CD} 
\Sel(A/K_{\mathfrak{p}^{\infty}})^{G} @>>> H^1(G_S(K_{\mathfrak{p}^{\infty}}), A_{p^{\infty}})^{G} 
@>{\psi_S(K_{\mathfrak{p}^{\infty}})}>> \bigoplus_{v \in S}J_v(K_{\mathfrak{p}^{\infty}})^{G} \\
     @AA{\alpha}A              @AA{\beta}A
@AA{\gamma = \bigoplus_{v\in S} \gamma_{v} }A     @. \\
\Sel(A/K) @>>> H^1(G_S(K), A_{p^{\infty}})
@>{\lambda_S(K)}>> \bigoplus_{v \in S}J_v(K).\\ \end{CD} \end{CD} \end{align} 
Here, recall that $S$ denotes any finite set of places of $K$ containing the primes above $p$, as well as all places where $A$ has bad reduction. The map 
$\psi_S(K_{\mathfrak{p}^{\infty}})$ is induced from the localization map 

\begin{align*} \lambda_S(K_{\mathfrak{p}^{\infty}}): H^1(G_S(K_{\mathfrak{p}^{\infty}}), A_{p^{\infty}}) &\longrightarrow \bigoplus_{s \in S} J_v(K_{\mathfrak{p}^{\infty}})\end{align*}
defining the $p^{\infty}$-Selmer group $\Sel(A/K_{\mathfrak{p}^{\infty}})$. The vertical arrows are induced by restriction on cohomology. We also consider the associated diagram 
\begin{align}\label{ifd}\begin{CD}\begin{CD} \im \left( \psi_S(K_{\mathfrak{p}^{\infty}}) \right) @>>> \bigoplus_{v \in S} J_v(K_{\mathfrak{p}^{\infty}}) @>>> \coker \left( \psi_S(K_{\mathfrak{p}^{\infty}})\right) \\
     @AA{\epsilon_1}A              @AA{\gamma = \bigoplus_{v\in S} \gamma_v}A
@AA{\epsilon_2}A     @. \\
\im \left( \lambda_S(K) \right) @>>>  \bigoplus_{v \in S}J_v(K) @>>> \coker \left( \lambda_S(K) \right).\\ \end{CD}\end{CD} \end{align} \end{remark}

\begin{remark}[Local restrictions maps.] 

We now compute the cardinalities of the kernel and cokernel of the restriction map $\gamma = \bigoplus_{v \in S} \gamma_v$ in $(\ref{fd})$, in a series of lemmas leading to 
Theorem \ref{LA} below. Throughout this section, we shall assume for simplicity that the abelian variety is principally polarized. 

\begin{lemma}\label{3.6} 

Let $v$ be a finite prime of $K$ which does not divide $p$. Assume that $A$ is principally polarized. Then, the group $J_v(K) = H^1(K_v, A)(p)$ is finite, and its order is given by 
the exact power of $p$ dividing the quotient $c_v(A)/L_v(A, 1)$. Here, $c_v(A) = [A(K_v): A_0(K_v)]$ denotes the local Tamagawa factor at $v$, and $L_v(A, 1)$ the Euler factor 
at $v$  of the Hasse-Weil $L$-function $L(A, s)$ at $s=1$. \end{lemma}

\begin{proof} The result is standard, see e.g. \cite[Lemma 3.6]{C}. \end{proof}

\begin{lemma}\label{3.7} 

Let $v$ be a finite prime of $K$ which does not divide $p$. Let $L$ be an arbitrary Galois extension of $K_v$ with Galois group $\Omega = \Gal(L/K_v)$. 
Let \begin{align*} \tau_v: H^1(K_v, A)(p) &\longrightarrow H^1(L, A)(p)^{\Omega}\end{align*} denote the natural map induced by restriction. Then, we have bijections 
\begin{align*} \ker(\tau_v) &\cong H^1(\Omega, A_{p^{\infty}}(L))\\ \coker(\tau_v) &\cong H^2(\Omega, A_{p^{\infty}}(L)). \end{align*}\end{lemma}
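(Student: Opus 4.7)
The plan is to apply inflation--restriction, invoke Tate local duality to kill the $H^2$ term on the right, and then identify the $p$-primary parts of the resulting cohomology groups with those of $A_{p^{\infty}}(L)$.

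First, I would write down the five-term exact sequence coming from the Hochschild--Serre spectral sequence $H^i(\Omega, H^j(L, A)) \Rightarrow H^{i+j}(K_v, A)$ attached to the short exact sequence of Galois groups $1 \to \Gal(\overline{K}_v/L) \to \Gal(\overline{K}_v/K_v) \to \Omega \to 1$, namely
\begin{align*}
0 \longrightarrow H^1(\Omega, A(L)) \longrightarrow H^1(K_v, A) \stackrel{\tau}{\longrightarrow} H^1(L, A)^{\Omega} \longrightarrow H^2(\Omega, A(L)) \longrightarrow H^2(K_v, A).
\end{align*}

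Next, I would invoke the classical vanishing $H^2(K_v, A) = 0$ for an abelian variety over a non-archimedean local field (a standard consequence of Tate local duality), which promotes the above to a short four-term exact sequence ending in zero. Since all four terms are torsion (being subquotients of the torsion group $H^1(K_v, A)$), the functor $(-)(p)$ preserves exactness, so I obtain
\begin{align*}
0 \longrightarrow H^1(\Omega, A(L))(p) \longrightarrow H^1(K_v, A)(p) \stackrel{\tau_v}{\longrightarrow} H^1(L, A)(p)^{\Omega} \longrightarrow H^2(\Omega, A(L))(p) \longrightarrow 0,
\end{align*}
yielding $\ker(\tau_v) \cong H^1(\Omega, A(L))(p)$ and $\coker(\tau_v) \cong H^2(\Omega, A(L))(p)$.

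The remaining step, and the main obstacle, is to replace $A(L)$ by $A_{p^{\infty}}(L)$ in these identifications. For this I would consider the short exact sequence of $\Omega$-modules
\begin{align*}
0 \longrightarrow A_{p^{\infty}}(L) \longrightarrow A(L) \longrightarrow A(L)/A_{p^{\infty}}(L) \longrightarrow 0
\end{align*}
and pass to the associated long exact $\Omega$-cohomology sequence. The quotient $A(L)/A_{p^{\infty}}(L)$ embeds into the uniquely $p$-divisible and $p$-torsion-free module $A(\overline{K}_v)/A_{p^{\infty}}$, so it is at least $p$-torsion-free. The delicate point is that this alone does not suffice to kill the $p$-primary part of $H^i(\Omega, A(L)/A_{p^{\infty}}(L))$ for $i = 1,2$; one also needs to exploit the Kummer sequence together with the hypothesis $v \nmid p$ (which controls $A(L) \otimes {\bf{Q}}_p/{\bf{Z}}_p$) to force the connecting homomorphisms to behave well enough that the natural map $H^i(\Omega, A_{p^{\infty}}(L)) \to H^i(\Omega, A(L))(p)$ becomes an isomorphism. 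Verifying this comparison carefully is where the main work of the proof lies.
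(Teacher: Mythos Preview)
Your approach is sound up to the point where you obtain $\ker(\tau_v) \cong H^1(\Omega, A(L))(p)$ and $\coker(\tau_v) \cong H^2(\Omega, A(L))(p)$, but then you leave the hardest step---replacing $A(L)$ by $A_{p^{\infty}}(L)$---essentially undone. The paper's argument avoids this difficulty entirely by reversing the order of operations: it applies the Kummer identification \emph{before} Hochschild--Serre rather than after.

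Concretely, the paper sets up the commutative square
\begin{align*}
\begin{CD}
H^1(K_v, A_{p^{\infty}}) @>{s_v}>> H^1(L, A_{p^{\infty}})^{\Omega} \\
@VVV @VVV \\
H^1(K_v, A)(p) @>{\tau_v}>> H^1(L, A)(p)^{\Omega}
\end{CD}
\end{align*}
with vertical arrows coming from the Kummer sequence. The hypothesis $v \nmid p$ forces $A(K_v)\otimes {\bf{Q}}_p/{\bf{Z}}_p = A(L)\otimes {\bf{Q}}_p/{\bf{Z}}_p = 0$, so both vertical arrows are isomorphisms. This reduces the problem to computing $\ker(s_v)$ and $\coker(s_v)$, and now Hochschild--Serre with coefficients in $A_{p^{\infty}}$ (together with the vanishing $H^2(K_v, A_{p^{\infty}})=0$) gives the answer directly as $H^i(\Omega, A_{p^{\infty}}(L))$ with no further comparison needed.

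So you have all the right ingredients---Kummer, Hochschild--Serre, the vanishing of $A(L)\otimes{\bf{Q}}_p/{\bf{Z}}_p$ at $v\nmid p$, and a local duality input---but you are deploying them in an order that creates an artificial obstacle. Swap the order and the ``main work'' you flagged simply disappears.
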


\begin{proof} See \cite[Lemma 3.7]{C}. That is, consider the commutative diagram \begin{align}\label{C37} \begin{CD} H^1(K_v, A_{p^{\infty}}) @>{s_v}>>
H^1(L, A_{p^{\infty}})^{\Omega}\\@VVV@VVV\\H^1(K_v, A)(p) @>{\tau_v}>> H^1(L, A)(p)^{\Omega}.\end{CD} \end{align} Here, $s_v$ denotes the restriction 
map on cohomology. The vertical arrows denote the surjective maps derived from Kummer theory on $A$. Now, observe that since $v$ does not divide $p$, 
we have identifications \begin{align*} A(K_v) \otimes {\bf{Q}}_p/{\bf{Z}}_p = A(L) \otimes {\bf{Q}}_p/{\bf{Z}}_p = 0. \end{align*} Hence, the vertical arrows in $(\ref{C37})$ 
are isomorphisms, and we have bijections \begin{align*} \ker(\tau_v) &\cong \ker(s_v)\\ \coker(\tau_v) &\cong \coker(s_v). \end{align*} The result now follows from the 
Hochschild-Serre spectral sequence associated to $s_v$, using the fact that $H^2(K_v, A_{p^{\infty}})=0$ by local Tate duality. \end{proof}

\begin{corollary}\label{LTF}

Let $v$ be a prime of $S$ not dividing $p$. If $v$ splits completely $K_{\mathfrak{p}^{\infty}}$, then the local restriction map 
$\gamma_v$ is an isomorphism, i.e. $\ker(\gamma_v) = \coker(\gamma_v) = 0$. If $v$ does not split completely in $K_{\mathfrak{p}^{\infty}}$, then $\gamma_v$
is the zero map, and the order of its kernel $J_v(K) = H^1(K_v, A)(p)$ is the exact power of $p$ dividing $c_v(A)/L_v(A,1)$. \end{corollary}

\begin{proof} The first assertion follows from the result of Corollary \ref{decomposition2}, which implies that $\cd_p(G_w) =0$ for any such prime $v$ of $K$ not dividing $p$.
To see the second assertion, it can be argued in the same manner as \cite[Lemma 3.3]{C} that the $J_v(K_{\mathfrak{p}^{\infty}})$ vanishes in this setting (i.e. since the Galois 
group of the extension $K_{\mathfrak{p}^{\infty}, w}$ over $K_v$ is isomorphic to a finite number of copies of the Galois group of the maximal unramified pro-$p$ extension of $K_v$). The result then follows from that of Lemma \ref{3.6}. Note as well that if we take $L= K_{\mathfrak{p}^{\infty}, w}$ with $\Omega = G_w$ in Lemma \ref{3.7}, we obtain identifications \begin{align*} \ker(\gamma_v) &= H^1(G_w, A_{p^{\infty}}) \\ \coker(\gamma_v) &= H^2(G_w, A_{p^{\infty}}), \end{align*} whence we can also deduce that 
$\vert H^1(G_w, A_{p^{\infty}}) \vert = \ord_p(c_v(A)/L_v(A,1))$ and that $H^2(G_w, A_{p^{\infty}}) = 0$. \end{proof} To study the localization maps $\gamma_v$ 
with $v$ dividing $p$ in the manner of \cite{C}, we start with the following basic result.

\begin{lemma}\label{0} Let $v$ be a prime of $K$ dividing $p$. Then, we have identifications \begin{align*}\ker(\gamma_v) &\cong H^1(G_w, A(K_{\mathfrak{p}^{\infty}, w}))(p) \\ 
\coker(\gamma_v) &\cong H^2(G_w, A(K_{\mathfrak{p}^{\infty}, w}))(p). \end{align*}\end{lemma}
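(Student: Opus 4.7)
The plan is to adapt the strategy used in the previous lemma (where $v \nmid p$), but to work directly with the Galois module $A(\overline{K}_v)$ rather than with its $p$-primary torsion $A_{p^{\infty}}$. The complication for $v \mid p$ is that the Kummer-theoretic simplification $A(K_v) \otimes {\bf{Q}}_p/{\bf{Z}}_p = 0$, which was crucial in the $v \nmid p$ case, is no longer available; in particular we cannot simply identify $\gamma_v$ with the restriction map on $H^1(K_v, A_{p^{\infty}})$.

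First, I would specialize the Shapiro identification $(\ref{shapiro})$ to degree zero to rewrite $J_v(K) = H^1(K_v, A)(p)$ and $J_v(K_{\mathfrak{p}^{\infty}})^G = H^1(K_{\mathfrak{p}^{\infty}, w}, A)(p)^{G_w}$, so that $\gamma_v$ becomes the natural restriction map on $p$-primary parts $H^1(K_v, A)(p) \longrightarrow H^1(K_{\mathfrak{p}^{\infty}, w}, A)(p)^{G_w}$. Here I use that passage to $p$-primary components commutes with $G_w$-invariants, since the $p$-primary component is a direct summand of any torsion abelian group.

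Next, I would write down the five-term exact sequence in low degrees for the Hochschild-Serre spectral sequence associated to the profinite Galois extension $K_{\mathfrak{p}^{\infty}, w}/K_v$ with coefficients in $A(\overline{K}_v)$:
\begin{align*} 0 \longrightarrow H^1(G_w, A(K_{\mathfrak{p}^{\infty}, w})) \longrightarrow H^1(K_v, A) \longrightarrow H^1(K_{\mathfrak{p}^{\infty}, w}, A)^{G_w} \longrightarrow H^2(G_w, A(K_{\mathfrak{p}^{\infty}, w})) \longrightarrow H^2(K_v, A). \end{align*}
Taking $p$-primary components (which is exact on torsion abelian groups), and invoking the standard vanishing $H^2(K_v, A) = 0$ for abelian varieties over non-archimedean local fields (a consequence of local Tate duality, see Milne, \emph{Arithmetic Duality Theorems}, Theorem I.3.2), we obtain
\begin{align*} 0 \longrightarrow H^1(G_w, A(K_{\mathfrak{p}^{\infty}, w}))(p) \longrightarrow H^1(K_v, A)(p) \stackrel{\gamma_v}{\longrightarrow} H^1(K_{\mathfrak{p}^{\infty}, w}, A)(p)^{G_w} \longrightarrow H^2(G_w, A(K_{\mathfrak{p}^{\infty}, w}))(p) \longrightarrow 0, \end{align*}
from which both identifications for $\ker(\gamma_v)$ and $\coker(\gamma_v)$ follow immediately.

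The only substantive technical point is the cited vanishing $H^2(K_v, A) = 0$; if one prefers not to quote Milne, its $p$-primary part can equally be deduced from the Kummer cohomology sequence together with the vanishing $H^2(K_v, A_{p^{\infty}}) = 0$ already used in the previous lemma. Everything else is a mechanical bookkeeping of the edge maps in Hochschild-Serre, so I do not anticipate any serious obstacle.
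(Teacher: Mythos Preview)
Your proof is correct and follows essentially the same approach as the paper: both apply the Hochschild--Serre spectral sequence for the extension $K_{\mathfrak{p}^{\infty},w}/K_v$ and use the vanishing of $H^2(K_v,A)(p)$ (which you cite directly from local duality, while the paper phrases it via $H^2(K_v,A_{p^{\infty}})=0$) to identify $\ker(\gamma_v)$ and $\coker(\gamma_v)$ from the resulting five-term exact sequence. Your write-up simply fills in the details that the paper's one-line proof leaves implicit.
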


\begin{proof} Taking the Hochschild-Serre spectral sequence associated to $K_{\mathfrak{p}^{\infty},w}$ over $K_v$ with the fact that $H^2(K_v, A_{p^{\infty}})=0$,
the result follows from the definition of $\gamma_v$. \end{proof} We now consider the primes $v \mid p$ of $K$ where $A$ has good ordinary reduction.

\begin{lemma}\label{I} Let $v$ be a prime of $K$ dividing $p$ where $A$ has good ordinary reduction. Assume that $A$ is principally polarized. Then, we have identifications

\begin{align*} \vert H^i(K_v,\widehat{A}_v(\overline{\mathfrak{m}}))\vert &= \begin{cases} 0 &\text{if $i \geq 2$}\\ \vert \widetilde{A}(\kappa_v)(p)\vert &\text{if $i=1.$}\end{cases}
\end{align*} \end{lemma}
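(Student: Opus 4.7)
The plan is to exploit the good ordinary reduction hypothesis, which provides two compatible short exact sequences of $G_{K_v}$-modules: the formal--\'etale decomposition on $\overline{K}_v$-points
\begin{align*}
0 \longrightarrow \widehat{A}_v(\overline{\mathfrak{m}}) \longrightarrow A(\overline{K}_v) \longrightarrow \widetilde{A}(\overline{\kappa}_v) \longrightarrow 0,
\end{align*}
and the connected--\'etale decomposition on the $p$-divisible group
\begin{align*}
0 \longrightarrow \widehat{A}_v[p^{\infty}] \longrightarrow A[p^{\infty}] \longrightarrow \widetilde{A}[p^{\infty}] \longrightarrow 0.
\end{align*}
The principal polarization $A \cong A^t$ together with the Weil pairing yields a canonical $G_{K_v}$-isomorphism $\widehat{A}_v[p^{\infty}]^{\vee}(1) \cong \widetilde{A}[p^{\infty}]$, which is the key input that makes local Tate duality directly applicable.

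To compute the cardinality of $H^1$, I would run the Kummer long exact sequence associated to $0 \to \widehat{A}_v[p^n] \to \widehat{A}_v(\overline{\mathfrak{m}}) \xrightarrow{[p^n]} \widehat{A}_v(\overline{\mathfrak{m}}) \to 0$, which is exact on the right since $[p]$ is surjective on $\widehat{A}_v(\overline{\mathfrak{m}})$. This yields
\begin{align*}
0 \longrightarrow \widehat{A}_v(\mathfrak{m}_v)/p^n \longrightarrow H^1(K_v, \widehat{A}_v[p^n]) \longrightarrow H^1(K_v, \widehat{A}_v(\overline{\mathfrak{m}}))[p^n] \longrightarrow 0.
\end{align*}
The local Euler--Poincar\'e formula gives $|H^1(K_v, \widehat{A}_v[p^n])| = |H^0| \cdot |H^2| \cdot p^{n g [K_v:{\bf{Q}}_p]}$ with $g = \dim A$. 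Local Tate duality combined with the Cartier identification above yields $|H^2(K_v, \widehat{A}_v[p^n])| = |\widetilde{A}[p^n](K_v)| = |\widetilde{A}(\kappa_v)[p^n]|$, while $|H^0(K_v, \widehat{A}_v[p^n])| = |\widehat{A}_v(\mathfrak{m}_v)[p^n]|$. Since $\widehat{A}_v(\mathfrak{m}_v)$ is a finitely generated ${\bf{Z}}_p$-module of rank $g[K_v:{\bf{Q}}_p]$, one has $|\widehat{A}_v(\mathfrak{m}_v)/p^n| = |\widehat{A}_v(\mathfrak{m}_v)[p^n]| \cdot p^{n g [K_v:{\bf{Q}}_p]}$ for $n$ large. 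The $p$-adic ranks cancel and the finite torsion telescopes, leaving $|H^1(K_v, \widehat{A}_v(\overline{\mathfrak{m}}))[p^n]| = |\widetilde{A}(\kappa_v)[p^n]|$, which stabilizes to $|\widetilde{A}(\kappa_v)(p)|$.

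For the vanishing $H^i(K_v, \widehat{A}_v(\overline{\mathfrak{m}})) = 0$ in degrees $i \geq 2$: since $\cd_p(G_{K_v}) = 2$ forces $H^i(K_v, \widehat{A}_v[p^n]) = 0$ for $i \geq 3$, the same Kummer long exact sequence shows the $p^n$-torsion and the $p^n$-cokernel of $H^i(K_v, \widehat{A}_v(\overline{\mathfrak{m}}))$ both vanish in degrees $\geq 3$; since continuous cohomology is $p$-primary and $[\ell]$ acts invertibly for $\ell \neq p$, this forces $H^i = 0$ in those degrees. For $i = 2$, the sequence yields $|H^2(K_v, \widehat{A}_v(\overline{\mathfrak{m}}))[p^n]| = |H^2(K_v, \widehat{A}_v[p^n])|/|H^1(K_v, \widehat{A}_v(\overline{\mathfrak{m}}))/p^n|$; substituting the cardinalities computed above shows both sides equal $1$ for $n$ large, and combined with the $p^n$-divisibility that follows from $H^3(K_v, \widehat{A}_v[p^n]) = 0$ we obtain $H^2(K_v, \widehat{A}_v(\overline{\mathfrak{m}})) = 0$. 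The main obstacle is the careful bookkeeping in the Euler-characteristic step, and in particular the need to have the principal polarization feed into Cartier duality so that $\widehat{A}_v[p^{\infty}]^{\vee}(1)$ is identified with $\widetilde{A}[p^{\infty}]$ rather than $\widetilde{A}^t[p^{\infty}]$; without this identification the Tate-duality computation would not simplify to $|\widetilde{A}(\kappa_v)(p)|$.
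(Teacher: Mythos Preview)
Your argument is correct and is precisely the standard computation that underlies the cited reference: the paper's own proof is simply ``See \cite[Lemma 3.13]{C}, the proof carries over without changes to the setting of principally polarized abelian varieties,'' and what you have written is exactly how one fills in that citation. The Kummer sequence for the formal group combined with the local Euler--Poincar\'e formula and local Tate duality (with the principal polarization providing the Cartier-duality identification $\widehat{A}_v[p^n]^{\vee}(1) \cong \widetilde{A}[p^n]$) is the expected route, and your bookkeeping is sound; in particular, once you know $|H^1(K_v,\widehat{A}_v(\overline{\mathfrak{m}}))[p^n]| = |\widetilde{A}(\kappa_v)[p^n]|$ for \emph{all} $n$, finiteness of $H^1$ follows immediately (it is $p$-primary torsion with bounded $p$-torsion), and then $|H^1/p^n| = |H^1[p^n]|$ gives the vanishing of $H^2[p^n]$ for every $n$, not merely large $n$.
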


\begin{proof} See \cite[Lemma 3.13]{C}, the proof carries over without changes to the setting of principally polarized abelian varieties. \end{proof}

\begin{lemma}\label{00} Let $v$ be a prime of $K$ above $p$ where $A$ has good ordinary reduction. Assume additionally that $A$ is principally polarized. 
Then, for all $i \geq 2$, we have isomorphisms \begin{align*} H^i(G_w, A(K_{\mathfrak{p}^{\infty}, w}))(p) &\cong H^i(G_w, \widetilde{A}_{v, p^{\infty}}).\end{align*} 
We also have for $i=1$ the short exact sequence \begin{align}\label{II} H^1(K_v, \widehat{A}_v(\overline{\mathfrak{m}})) \longrightarrow 
H^1(G_w, A(K_{\mathfrak{p}^{\infty},w}))(p) \longrightarrow H^1(G_w, \widetilde{A}_{v, p^{\infty}}). \end{align} \end{lemma}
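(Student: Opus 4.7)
The plan is to derive both statements from the long exact sequence in $G_w$-cohomology attached to the formal group filtration, after first identifying the cohomology of the formal group itself via a Hochschild-Serre degeneration that exploits the deeply ramified property of $K_{\mathfrak{p}^{\infty},w}/K_v$.

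First I would invoke the short exact sequence of $G_w$-modules
\begin{align*}
0 \longrightarrow \widehat{A}_v(\mathfrak{m}_{K_{\mathfrak{p}^{\infty},w}}) \longrightarrow A(K_{\mathfrak{p}^{\infty},w}) \longrightarrow \widetilde{A}(\kappa_w) \longrightarrow 0
\end{align*}
coming from good reduction of $A$ at $v$, noting that the residue field is unchanged (since $v \mid p$ is totally ramified in $K_{\mathfrak{p}^{\infty}}/K$, by Proposition \ref{decomposition}), so that $\kappa_w = \kappa_v$ and the $G_w$-action on $\widetilde{A}(\kappa_w)$ is trivial. Passing to the associated long exact sequence in $G_w$-cohomology and extracting $p$-primary parts, the right-hand quotient contributes exactly $H^i(G_w, \widetilde{A}_{v,p^\infty})$ in the appropriate degrees; everything therefore reduces to showing that $H^i(G_w, \widehat{A}_v(\mathfrak{m}_{K_{\mathfrak{p}^{\infty},w}}))(p)$ vanishes for $i \geq 2$ and coincides with $H^1(K_v, \widehat{A}_v(\overline{\mathfrak{m}}))$ when $i = 1$.

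Next I would run the Hochschild-Serre spectral sequence
\begin{align*}
E_2^{p,q} = H^p(G_w, H^q(K_{\mathfrak{p}^{\infty},w}, \widehat{A}_v(\overline{\mathfrak{m}}))) \Longrightarrow H^{p+q}(K_v, \widehat{A}_v(\overline{\mathfrak{m}})).
\end{align*}
Since $K_{\mathfrak{p}^{\infty},w}/K_v$ is deeply ramified in the sense of Coates-Greenberg, the same input that underlies $(\ref{63})$ (together with $\cd_p \Gal(\overline{K}_v/K_{\mathfrak{p}^{\infty},w}) \leq 1$) forces $H^q(K_{\mathfrak{p}^{\infty},w}, \widehat{A}_v(\overline{\mathfrak{m}}))(p) = 0$ for $q \geq 1$. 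The spectral sequence therefore collapses on $p$-parts to give canonical identifications $H^i(G_w, \widehat{A}_v(\mathfrak{m}_{K_{\mathfrak{p}^{\infty},w}})) \cong H^i(K_v, \widehat{A}_v(\overline{\mathfrak{m}}))$ in all degrees $i \geq 0$, after which Lemma \ref{I} kills these groups for $i \geq 2$ and identifies the $i=1$ contribution with the finite group of order $\vert \widetilde{A}(\kappa_v)(p) \vert$.

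Feeding this back into the long exact sequence from the first step, both the incoming $H^i(G_w, \widehat{A}_v(\mathfrak{m}_{K_{\mathfrak{p}^{\infty},w}}))(p)$ and the outgoing $H^{i+1}(G_w, \widehat{A}_v(\mathfrak{m}_{K_{\mathfrak{p}^{\infty},w}}))(p)$ vanish whenever $i \geq 2$, yielding the desired isomorphism; in the case $i = 1$ only the outgoing $H^2$ vanishes, which leaves precisely the three-term exact sequence $(\ref{II})$. The main obstacle I expect is the clean justification of the deeply ramified vanishing $H^1(K_{\mathfrak{p}^{\infty},w}, \widehat{A}_v(\overline{\mathfrak{m}}))(p) = 0$, which must be imported carefully from \cite[Propositions 4.3 and 4.8]{CG} in a way that accounts for $\delta = [F_{\mathfrak{p}}:{\bf{Q}}_p]$ potentially being larger than one, but this is purely a citation issue rather than a new cohomological calculation.
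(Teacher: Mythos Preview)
Your proposal is correct and follows essentially the same route as the paper, which simply defers to \cite[Lemmas 3.12 and 3.14]{C}: the formal group filtration $0 \to \widehat{A}_v(\mathfrak{m}_\infty) \to A(K_{\mathfrak{p}^\infty,w}) \to \widetilde{A}(\kappa_w) \to 0$, the Hochschild--Serre collapse $H^i(G_w,\widehat{A}_v(\mathfrak{m}_\infty)) \cong H^i(K_v,\widehat{A}_v(\overline{\mathfrak{m}}))$ driven by the Coates--Greenberg deeply ramified vanishing, and then Lemma~\ref{I} to kill the $i\geq 2$ terms. Your write-up in fact supplies the details the paper omits; the only cosmetic point is that the sequence $(\ref{II})$ you obtain is exact at the middle and surjective on the right, which matches what is actually used in the subsequent Euler characteristic count.
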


\begin{proof} The results are deduced from the lemmas above, following \cite[Lemmas 3.12 and 3.14]{C}, which work in the same way for principally polarized abelian 
varieties using the results of \cite{CG}. \end{proof} Putting these identifications together, we obtain the following result. Let us write $h_{v, i}$ to denote the cardinality of 
$H^i(G_w, \widetilde{A}_{v, p^{\infty}})$ for any integer $i \geq 0$. 

\begin{proposition} Let $v$ be a prime of $S$ above $p$ where $A$ has good ordinary reduction. Assume additionally that $A$ is principally polarized. Then, $\ker(\gamma_v)$ 
and $\coker(\gamma_v)$ are finite, and we have the identification \begin{align*} \frac{\vert \ker(\gamma_v)\vert}{\vert \coker(\gamma_v) \vert} &= \frac{h_{v,0} \cdot h_{v, 1}}{h_{v, 2}}.
\end{align*}\end{proposition}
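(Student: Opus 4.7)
The plan is to chase kernels and cokernels through the three lemmas just established. Starting from Lemma \ref{0}, the identifications
\[ \vert\ker(\gamma_v)\vert = \vert H^1(G_w, A(K_{\mathfrak{p}^{\infty},w}))(p)\vert, \qquad \vert\coker(\gamma_v)\vert = \vert H^2(G_w, A(K_{\mathfrak{p}^{\infty},w}))(p)\vert \]
reduce the problem to computing the cardinalities of these two $G_w$-cohomology groups. Lemma \ref{00} then translates each of them into cohomology of the reduction module $\widetilde{A}_{v,p^{\infty}}$, together with a formal-group contribution coming from Lemma \ref{I}.

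For the cokernel, the isomorphism $H^2(G_w, A(K_{\mathfrak{p}^{\infty},w}))(p)\cong H^2(G_w, \widetilde{A}_{v,p^{\infty}})$ from Lemma \ref{00} (at $i=2$) immediately yields $\vert\coker(\gamma_v)\vert = h_{v,2}$. For the kernel, I would exploit the short exact sequence $(\ref{II})$ of Lemma \ref{00}, whose left term has cardinality $\vert H^1(K_v,\widehat{A}_v(\overline{\mathfrak{m}}))\vert = \vert \widetilde{A}(\kappa_v)(p)\vert$ by Lemma \ref{I} and whose right term has cardinality $h_{v,1}$ by definition; the exactness produces
\[ \vert \ker(\gamma_v)\vert = \vert \widetilde{A}(\kappa_v)(p)\vert \cdot h_{v,1}. \]
Finiteness of both $\ker(\gamma_v)$ and $\coker(\gamma_v)$ then falls out of the finiteness of each factor, which in turn follows from $\cd_p(G_w)=\delta<\infty$ (Corollary \ref{decomposition2}) together with the finiteness of $A_{p^{\infty}}(K_v)$ guaranteed (a fortiori) by Lemma \ref{finitetorsion}.

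What remains to close the argument is the identification $\vert\widetilde{A}(\kappa_v)(p)\vert = h_{v,0}$. By definition $h_{v,0}=\vert \widetilde{A}_{v,p^{\infty}}^{G_w}\vert = \vert A_{p^{\infty}}(K_v)\vert$, so this amounts to showing that the reduction map $A_{p^{\infty}}(K_v)\to \widetilde{A}(\kappa_v)(p)$ is a bijection at a good-ordinary prime $v\mid p$. I expect this to be the main point requiring care: one must rule out any $p$-primary contribution from the formal group $\widehat{A}_v(\mathfrak{m}_{K_v})$, which follows from the Coates--Greenberg theory of deeply ramified extensions (as used in the proofs of Lemmas \ref{I} and \ref{00}) together with the fact that $K_{\mathfrak{p}^{\infty}}$ is disjoint from the cyclotomic tower, so that no new $p$-power roots of unity are introduced. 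Substituting $\vert\widetilde{A}(\kappa_v)(p)\vert = h_{v,0}$ into the expression for $\vert\ker(\gamma_v)\vert$ and dividing by $\vert\coker(\gamma_v)\vert = h_{v,2}$ then gives the asserted ratio $h_{v,0}\cdot h_{v,1}/h_{v,2}$.
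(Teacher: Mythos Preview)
Your approach is essentially identical to the paper's: both compute $\vert\coker(\gamma_v)\vert=h_{v,2}$ via Lemmas~\ref{0} and~\ref{00}, and $\vert\ker(\gamma_v)\vert=h_{v,0}\cdot h_{v,1}$ via the exact sequence~$(\ref{II})$ together with Lemma~\ref{I}. The paper's terse phrase ``Lemma~\ref{I} for $i=0$'' is pointing to exactly the step you flagged as needing care, namely the identification $\vert\widetilde{A}(\kappa_v)(p)\vert=h_{v,0}$; in Coates' original \cite[Lemma~3.13]{C} the $i=0$ statement is the vanishing of $p$-torsion in $\widehat{A}_v(\mathfrak{m}_{K_v})$, which makes the reduction map on $p$-power torsion injective (surjectivity being Hensel). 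Your instinct that this is the crux is right, though your proposed justification via disjointness from the cyclotomic tower is not quite the mechanism: the point is rather that the formal group of an ordinary abelian variety over $\mathcal{O}_{K_v}$ has no nontrivial $p$-torsion rational over $K_v$ itself (an unramified-character argument in the elliptic case, and the analogous Coates--Greenberg input in general), independent of what happens higher in the tower.
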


\begin{proof} Lemmas \ref{0} and \ref{00} imply that $\vert \coker(\gamma_v)\vert = h_{v,2}$. Using the exact sequence $(\ref{II})$ along with Lemma \ref{I} for $i=0$, we then deduce 
that $\vert \coker(\gamma_v) \vert = h_{v, 0} \cdot h_{v, 1}$, as required. \end{proof} 

To summarize what we have shown in this section, let us for a prime $v \in S$ not dividing $p$ write $\mathfrak{m}_v(A)$ to denote the exponent $\ord_p(c_v(A)/L_v(A, 1))$.

\begin{theorem}\label{LA} Let $S_{\ns}$ denote the subset of primes of the fixed set $S$ which (i) do not dividing $p$ and (ii) do not split completely in $K_{\mathfrak{p}^{\infty}}$.
We have the following description of the global restriction map $\gamma$ in $(\ref{fd})$: \begin{align*}\frac{\vert \ker(\gamma)\vert}{\vert \coker(\gamma) \vert} 
&= \prod_{v \mid p \atop v \in S} \frac{h_{v,0} \cdot h_{v, 1}}{h_{v, 2}} \times \prod_{v \nmid p \atop v \in S_{\ns}} \mathfrak{m}_v(A). \end{align*}\end{theorem} \end{remark}

\begin{remark}[Global calculation.] 

We start with the easy part of the calculation, which in some sense is just diagram chasing in $(\ref{fd})$. Let us now assume for simplicity that $A$ has good ordinary reduction at all primes above $p$ in $K$. 

\begin{lemma}\label{beta} We have the following bijections for the restriction map $\beta$: \begin{align*} \ker(\beta) &\cong H^1(G,A_{p^{\infty}}) \\ \coker(\beta) &\cong H^2(G,
A_{p^{\infty}}).\end{align*} \end{lemma}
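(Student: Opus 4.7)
The plan is to deduce both isomorphisms from the five-term exact sequence of the Hochschild-Serre spectral sequence
\begin{align*}
E_2^{i,j} = H^i\bigl(G, H^j(G_S(K_{\mathfrak{p}^{\infty}}), A_{p^{\infty}})\bigr) \Longrightarrow H^{i+j}(G_S(K), A_{p^{\infty}}),
\end{align*}
which is available because $K_{\mathfrak{p}^{\infty}} \subset K_S$ and $G = \Gal(K_{\mathfrak{p}^{\infty}}/K)$ is the quotient of $G_S(K)$ by $G_S(K_{\mathfrak{p}^{\infty}})$.

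First, I would write out the standard five-term sequence in low degree, namely
\begin{align*}
0 \to H^1(G, A_{p^{\infty}}) \to H^1(G_S(K), A_{p^{\infty}}) \xrightarrow{\beta} H^1(G_S(K_{\mathfrak{p}^{\infty}}), A_{p^{\infty}})^G \to H^2(G, A_{p^{\infty}}) \to H^2(G_S(K), A_{p^{\infty}}),
\end{align*}
where by $A_{p^{\infty}}$ in the $G$-cohomology terms I mean $A_{p^{\infty}}(K_{\mathfrak{p}^{\infty}})$, consistent with the notation fixed earlier in the paper. The inflation map into $H^1(G_S(K), A_{p^{\infty}})$ gives the kernel of $\beta$ directly, and the transgression identifies the cokernel with the image of the map into $H^2(G, A_{p^{\infty}})$.

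To convert this into the two stated bijections, the remaining input is the vanishing of the last term. This is precisely the classical fact $H^i(G_S(K), A_{p^{\infty}}) = 0$ for $i \geq 2$ already invoked in the earlier Hochschild-Serre lemma of Section 3 (it follows from the fact that $G_S(K)$ has strict $p$-cohomological dimension two together with local and global duality, and the vanishing of the relevant $H^2$ away from the primes in $S$). With $H^2(G_S(K), A_{p^{\infty}}) = 0$, the transgression becomes surjective, and the five-term sequence breaks into the short exact sequence
\begin{align*}
0 \to H^1(G, A_{p^{\infty}}) \to H^1(G_S(K), A_{p^{\infty}}) \xrightarrow{\beta} H^1(G_S(K_{\mathfrak{p}^{\infty}}), A_{p^{\infty}})^G \to H^2(G, A_{p^{\infty}}) \to 0,
\end{align*}
from which the two identifications $\ker(\beta) \cong H^1(G, A_{p^{\infty}})$ and $\coker(\beta) \cong H^2(G, A_{p^{\infty}})$ fall out immediately.

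There is no substantive obstacle here: the only non-formal ingredient is the vanishing $H^2(G_S(K), A_{p^{\infty}}) = 0$, which has already been used in the paper and can simply be cited, so the lemma is a direct five-term sequence computation.
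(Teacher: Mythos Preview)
Your argument is correct and is exactly the approach taken in the paper: the inflation--restriction (five-term Hochschild--Serre) exact sequence together with the vanishing $H^2(G_S(K), A_{p^{\infty}}) = 0$ already cited in Section~3. The paper's proof is a one-line reference to precisely these two ingredients.
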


\begin{proof} This follows immediately from the inflation-restriction exact sequence, using the fact that $H^2(G_S(K), A_{p^{\infty}})=0$. \end{proof} Recall that given a prime 
$v \in S$ and an integer $i \geq 0$, we write $h_{v,i}$ to denote the cardinality of $H^i(G_w, \widetilde{A}_{v, p^{\infty}})$. Let us also write $h_i$ to denote the cardinality of 
$H^i(G, A_{p^{\infty}})$. We now give the first part of the global calculation:

\begin{lemma} Assume that $\Sel(A/K)$ and $\coker(\lambda_S(K))$ are finite. Assume that $A$ has good ordinary reduction at each prime above $p$ in $K$. Then,
the cardinality $\vert H^0(G, \Sel(A/K_{\mathfrak{p}^{\infty}})) \vert $ is given by the formula

\begin{align}\label{global1} \vert \Sel(A/K) \vert \cdot \vert \coker(\psi_S(K_{\mathfrak{p}^{\infty}})) \vert \cdot \frac{h_2}{h_1\cdot h_0} \cdot \prod_{v \mid p} \frac{h_{v,0} \cdot h_{v,1}}{h_{v,2}} \cdot \prod_{v \nmid p \atop v \in S_{\ns}} \mathfrak{m}_v(A). \end{align} \end{lemma}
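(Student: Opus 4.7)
The plan is to derive the formula by a two-step snake lemma argument: first apply the snake lemma to $(\ref{fd})$, then to the auxiliary diagram $(\ref{ifd})$, and combine the two alternating-product identities with a global duality input.

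First, I would decompose the two rows of $(\ref{fd})$ into short exact sequences $0 \to \Sel(A/K) \to H^1(G_S(K), A_{p^\infty}) \to \im(\lambda_S(K)) \to 0$ and its analogue on top, with induced vertical maps $\alpha,\beta,\epsilon_1$ (the last being the restriction of $\gamma$ to the images). The snake lemma produces
\[
0 \to \ker\alpha \to \ker\beta \to \ker\epsilon_1 \to \coker\alpha \to \coker\beta \to \coker\epsilon_1 \to 0,
\]
and Lemma \ref{beta} gives $|\ker\beta| = h_1$, $|\coker\beta| = h_2$, so the alternating product reads $|\coker\alpha|/|\ker\alpha| = (h_2/h_1)\cdot(|\ker\epsilon_1|/|\coker\epsilon_1|)$. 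I would then repeat the procedure on $(\ref{ifd})$, whose rows are the short exact sequences $0 \to \im(\lambda_S(K)) \to \bigoplus_v J_v(K) \to \coker(\lambda_S(K)) \to 0$ and top analogue. This yields a second six-term exact sequence involving $\epsilon_2 : \coker(\lambda_S(K)) \to \coker(\psi_S(K_{\mathfrak{p}^\infty}))$, and Theorem \ref{LA} in the good ordinary case supplies $|\ker\gamma|/|\coker\gamma| = \prod_{v \mid p} h_{v,0}h_{v,1}/h_{v,2}$, so the alternating product gives an expression for $|\ker\epsilon_1|/|\coker\epsilon_1|$ in terms of the local $h_{v,i}$ and $|\coker\epsilon_2|/|\ker\epsilon_2|$.

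Combining these with the left-column identity $|\Sel(A/K_{\mathfrak{p}^\infty})^G| = |\Sel(A/K)| \cdot |\coker\alpha|/|\ker\alpha|$ and the rank-nullity identity $|\coker\epsilon_2|/|\ker\epsilon_2| = |\coker(\psi_S(K_{\mathfrak{p}^\infty}))|/|\coker(\lambda_S(K))|$, one arrives at the stated formula with $h_0$ replaced by $|\coker(\lambda_S(K))|$. The hard part of the proof is then the final identification $|\coker(\lambda_S(K))| = h_0 = |A(K)(p)|$: by global Poitou-Tate duality (using the principal polarization $A \cong A^t$), $\coker(\lambda_S(K))$ is the Pontryagin dual of the compactified Selmer group $\mathfrak{S}(A/K) = \varprojlim_n \Sel_{p^n}(A/K)$, and the finiteness hypothesis on $\Sel(A/K)$ forces $A(K)$ to be torsion, whence a standard descent computation identifies $\mathfrak{S}(A/K) \cong A(K)(p)$ and gives $|\coker(\lambda_S(K))| = h_0$, completing the proof.
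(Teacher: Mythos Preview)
Your proposal is correct and follows essentially the same two-step snake lemma argument as the paper: apply the snake lemma to $(\ref{ifd})$ to express $\vert\ker\epsilon_1\vert/\vert\coker\epsilon_1\vert$ in terms of $\gamma$ and the two cokernel terms, then apply it to $(\ref{fd})$ using Lemma~\ref{beta}, and combine. The only difference in presentation is that what you flag as ``the hard part''---the identification $\vert\coker(\lambda_S(K))\vert = h_0$---is simply quoted in the paper as the well-known fact $(\ref{id2})$; your Poitou--Tate explanation is the standard justification for it, so no genuine divergence in method.
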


\begin{proof} Theorem \ref{LA} implies that \begin{align}\label{gamma} \frac{\vert \ker(\gamma)\vert}{\vert \coker(\gamma)\vert} = \prod_{v \mid p} \frac{h_{v,0} 
\cdot h_{v, 1}}{h_{v, 2}} \cdot \prod_{v \nmid p \atop v \in S_{\ns}} \mathfrak{m}_v(A).\end{align} Observe that the numerator and denominator of $(\ref{gamma})$ are finite 
by our assumption that $A$ has good ordinary reduction at each prime above $p$ in $K$. Using this result in the snake lemma associated to $(\ref{ifd})$, we deduce that 
$\ker(\epsilon_1)$ and $\coker(\epsilon_1)$ must be finite. Moreover, we deduce that \begin{align}\label{id1} \frac{\vert \ker(\epsilon_1)\vert}{\vert \coker(\epsilon_1)\vert} 
&= \frac{\vert \ker(\gamma)\vert }{\vert \coker(\gamma)\vert} \cdot \frac{\vert \coker\left( \psi_S(K_{\mathfrak{p}^{\infty}})\right) \vert}{\vert \coker\left( \lambda_S(K)\right) \vert}. \end{align} 
Now, it is well-known that if $\Sel(A/K)$ and $\coker(\lambda_S(K))$ are finite, then \begin{align}\label{id2} \vert \coker\left( \lambda_S(K) \right) \vert = A(K)_{p^{\infty}}= 
h_0.\end{align} On the other hand, taking the snake lemma in $(\ref{fd})$, we find that \begin{align}\label{id3} \frac{\vert \Sel(A/K_{\mathfrak{p}^{\infty}})^G\vert}{\vert \Sel(A/K) \vert} 
= \frac{\vert \coker(\beta)\vert}{\vert \ker(\beta)\vert} \cdot \frac{\vert \ker(\epsilon_1)\vert}{\vert \coker(\epsilon_1)\vert}. \end{align} The result now follows from $(\ref{id3})$ by 
$(\ref{gamma})$, $(\ref{id1})$ and $(\ref{id2})$. \end{proof}

\begin{corollary}\label{mainEC+1} If the localization map $\lambda_S(K_{\mathfrak{p}^{\infty}})$ is surjective, then \begin{align}\label{globalEC+1} \chi(G, \Sel(A/K_{\mathfrak{p}^{\infty}})) = \frac{\vert \Sel(A/K)\vert }{h_0^2} \cdot \prod_{v \mid p} h_{0,v}^2 \cdot \prod_{v \nmid p \atop v \in S_{\ns}} \mathfrak{m}_v(A). \end{align}\end{corollary}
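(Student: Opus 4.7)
The plan is to extend the computation of the preceding lemma by also evaluating the higher cohomology groups $H^i(G,\Sel(A/K_{\mathfrak{p}^{\infty}}))$ for $i\geq 1$, and then to assemble everything via the long exact sequence arising from the surjectivity hypothesis. Under that hypothesis the defining sequence for the Selmer group becomes a short exact sequence of discrete $G$-modules
\begin{equation*}
0 \longrightarrow \Sel(A/K_{\mathfrak{p}^{\infty}}) \longrightarrow H^1(G_S(K_{\mathfrak{p}^{\infty}}), A_{p^{\infty}}) \longrightarrow \bigoplus_{v \in S} J_v(K_{\mathfrak{p}^{\infty}}) \longrightarrow 0.
\end{equation*}
Although the $G$-invariants of the two right-hand terms are in general not finite, the resulting long exact sequence of $G$-cohomology becomes a sequence of \emph{finite} abelian groups from $\coker(\psi_S(K_{\mathfrak{p}^{\infty}}))$ onwards. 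The Euler--Poincar\'e principle for such finite long exact sequences therefore expresses $\prod_{i\geq 1} \vert H^i(G, \Sel(A/K_{\mathfrak{p}^{\infty}}))\vert^{(-1)^i}$ as an alternating product involving $\vert\coker(\psi_S(K_{\mathfrak{p}^{\infty}}))\vert$ together with the analogous higher cohomologies of $H^1(G_S(K_{\mathfrak{p}^{\infty}}), A_{p^{\infty}})$ and of each local term $J_v(K_{\mathfrak{p}^{\infty}})$.

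For those higher cohomologies the ingredients are already in hand in the excerpt: the Hochschild--Serre spectral sequence gives $H^i(G, H^1(G_S(K_{\mathfrak{p}^{\infty}}), A_{p^{\infty}})) \cong H^{i+2}(G, A_{p^{\infty}})$ for every $i \geq 1$; Shapiro's lemma identifies $H^i(G, J_v(K_{\mathfrak{p}^{\infty}}))$ with $H^i(G_w, H^1(K_{\mathfrak{p}^{\infty}, w}, A)(p))$; this vanishes for $v \nmid p$ by Proposition \ref{decomposition}; and for $v \mid p$ with $1 \leq i \leq \delta-2$ it is identified with $H^{i+2}(G_w, \widetilde{A}_{v, p^{\infty}})$ (vanishing in higher degrees) by the higher-cohomology lemma. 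Consequently the tail products over $i \geq 1$ involve only the cardinalities $h_j = \vert H^j(G, A_{p^{\infty}})\vert$ and $h_{v, j} = \vert H^j(G_w, \widetilde{A}_{v, p^{\infty}})\vert$ in degrees $j \geq 3$.

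Next I would invoke Proposition \ref{chi}, whose content $\chi(G, A_{p^{\infty}}) = 1$ and $\chi(G_w, \widetilde{A}_{v, p^{\infty}}) = 1$ converts each such tail product into a ratio involving only degrees $0, 1, 2$, namely $\prod_{j \geq 3} h_j^{(-1)^j} = h_0^{-1} h_1 h_2^{-1}$ and $\prod_{j \geq 3} h_{v, j}^{(-1)^j} = h_{v, 0}^{-1} h_{v, 1} h_{v, 2}^{-1}$. Plugging these into the Euler--Poincar\'e expression for $\prod_{i \geq 1}\vert H^i(G, \Sel(A/K_{\mathfrak{p}^{\infty}}))\vert^{(-1)^i}$ and multiplying by the explicit formula for $\vert H^0(G, \Sel(A/K_{\mathfrak{p}^{\infty}}))\vert$ supplied by the preceding lemma, the factor $\vert\coker(\psi_S(K_{\mathfrak{p}^{\infty}}))\vert$ cancels, the global factor in $h_0, h_1, h_2$ collapses to $h_0^{-2}$, and the local contribution at each $v \mid p$ telescopes to $h_{v, 0}^2$, leaving exactly $\vert\Sel(A/K)\vert/h_0^2 \cdot \prod_{v \mid p} h_{0, v}^2$.

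The main obstacle is accounting rather than any deep new ingredient: three long exact sequences, each truncated at a different cohomological degree (essentially $\delta$ for the local terms and $\delta + 1$ for the global term), must be tracked in parallel, and the signs in the alternating products kept straight so that the two telescoping identities from Proposition \ref{chi} line up correctly with the local factor $h_{v, 0} h_{v, 1}/h_{v, 2}$ that appears in the preceding lemma. A minor preliminary sanity check is that $\chi(G, \Sel(A/K_{\mathfrak{p}^{\infty}}))$ is itself well defined, i.e.\ that every $H^i(G, \Sel(A/K_{\mathfrak{p}^{\infty}}))$ is finite; but this is forced by the long exact sequence once one notes that both $H^i(G, H^1(G_S(K_{\mathfrak{p}^{\infty}}), A_{p^{\infty}}))$ and $H^i(G, J_v(K_{\mathfrak{p}^{\infty}}))$ are finite in degrees $i \geq 1$.
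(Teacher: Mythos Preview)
Your proposal is correct and follows essentially the same route as the paper: take the short exact sequence afforded by surjectivity of $\lambda_S(K_{\mathfrak{p}^{\infty}})$, pass to the long exact sequence in $G$-cohomology, identify the higher cohomologies of the middle and right-hand terms via Hochschild--Serre and Shapiro (so that only the $h_j$ and $h_{v,j}$ for $j\geq 3$ appear), and then combine the alternating product with the formula for $\vert H^0(G,\Sel(A/K_{\mathfrak{p}^\infty}))\vert$ from the preceding lemma, using Proposition~\ref{chi} to collapse the tail products. Your bookkeeping via the Euler--Poincar\'e principle is in fact somewhat cleaner than the paper's extraction of the truncated sequence $(\ref{extractedES})$, but the ingredients and the cancellation mechanism are identical.
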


\begin{proof} If the localization map $\lambda_S(K_{\mathfrak{p}^{\infty}})$ is surjective, then we have the short exact sequence
\begin{align}\label{locseq} 0 \longrightarrow \Sel(A/K_{\mathfrak{p}^{\infty}}) \longrightarrow B_{\infty} \longrightarrow C_{\infty}
\longrightarrow 0,\end{align} where \begin{align*} B_{\infty} &= H^1(G_S(K_{\mathfrak{p}^{\infty}}), A_{p^{\infty}}) \\ C_{\infty} &= \bigoplus_{v \in S}
J_v(K_{\mathfrak{p}^{\infty}}).\end{align*} Taking $G$-cohomology of $(\ref{locseq})$ then gives the long exact sequence 

\begin{align}\label{Gcoh}\begin{CD} 0 @>>> H^0(G, \Sel(A/K_{\mathfrak{p}^{\infty}})) @>>> H^0(G, B_{\infty}) @>{\psi_S(K_{\mathfrak{p}^{\infty}})}>> H^0(G, C_{\infty}) \\
@>>> H^1(G, \Sel(A/K_{\mathfrak{p}^{\infty}})) @>>> H^1(G, B_{\infty}) @>>> H^1(G, C_{\infty}) \\ @>>> \ldots \\ @>>> H^{\delta}(G, \Sel(A/K_{\mathfrak{p}^{\infty}})) 
@>>> H^{\delta}(G, B_{\infty}) @>>> H^{\delta}(G, C_{\infty})\\ @>>> 0.\end{CD}\end{align} Let us now write $$W_{i,v} = H^i(G_w, H^1(K_{\mathfrak{p}^{\infty}, v},
A_{p^{\infty}})(p)).$$ We can then extract from $(\ref{Gcoh})$ the long exact sequence \begin{align}\label{Gcoh2}\begin{CD} 0 @>>> H^0(G, \Sel(A/K_{\mathfrak{p}^{\infty}}))  
@>>> H^0(G, B_{\infty}) @>{\psi_S(K_{\mathfrak{p}^{\infty}})}>> \bigoplus_{v \in S} W_{0,v} \\ @>>> H^1(G, \Sel(A/K_{\mathfrak{p}^{\infty}})) @>>> H^3(G, A_{p^{\infty}}) @>>> \bigoplus_{v \in S} H^3(G_w, \widetilde{A}_{v, p^{\infty}}) \\ @>>> \ldots \\ @>>> H^{\delta}(G, \Sel(A/K_{\mathfrak{p}^{\infty}})) @>>> 0.\end{CD}\end{align} Here, 
the identifications for the central terms come from the Hochschild-Serre spectral sequence, $(\ref{HSSSg})$. The identifications of the terms on the right come from Corollary
\ref{vnmidpvanish} for the first row, then from the version of Shapiro's lemma given in $(\ref{shapiro})$ along with the bijections $(\ref{lcgHSSS})$ for the subsequent rows. Note that we have ignored the vanishing of higher cohomology groups here. The reason for this will me made clear below. That is, using the definition of the map 
$\psi_S(K_{\mathfrak{p}^{\infty}})$ in $(\ref{fd})$, we can then extract from $(\ref{Gcoh2})$ the exact sequence \begin{align}\label{extractedES}\begin{CD} 0 
@>>> \coker\left( \psi_S(K_{\mathfrak{p}^{\infty}}) \right) @>>> H^1(G, \Sel(A/K_{\mathfrak{p}^{\infty}})) @>>> H^3(G, A_{p^{\infty}})\\ 
@>>> \cdots @>>> H^{\delta}(G, \Sel(A/K_{\mathfrak{p}^{\infty}})) @>>> 0. \end{CD}\end{align} Using $(\ref{extractedES})$, we deduce that

\begin{align}\label{ECid2} \chi(G, \Sel(A/K_{\mathfrak{p}^{\infty}})) = \frac{\vert H^1(G, \Sel(A/K_{\mathfrak{p}^{\infty}}))\vert }{\vert \coker\left( \psi_S(K_{\mathfrak{p}^{\infty}})\right)\vert} \cdot \chi^{(3)}(G, A_{p^{\infty}}) \cdot \prod_{v \in S} \chi^{(3)}(G_w, \widetilde{A}_{v, p^{\infty}})^{-1}. \end{align} Here, we have written 

\begin{align*} \chi^{(3)}(G, A_{p^{\infty}}) = \prod_{i \geq 3} \vert H^i(G, A_{p^{\infty}}) \vert^{(-1)^i},\end{align*} and 

\begin{align*} \chi^{(3)}(G_w, A_{p^{\infty}}) = \prod_{i \geq 3} \vert H^i(G_w, \widetilde{A}_{v, p^{\infty}}) \vert^{(-1)^i}. \end{align*} Substituting $(\ref{global1})$ into $(\ref{ECid2})$, 
we then find that $\chi(G, \Sel(A/K_{\mathfrak{p}^{\infty}}))$ is equal to \begin{align*} \vert \Sel(A/K) \vert &\cdot \frac{h_2}{h_0 h_1} \cdot \prod_{v \mid p} \frac{h_{v, 0}h_{v,1}}{h_{v,2}} \cdot \prod_{v \nmid p \atop v \in S_{\ns}} \mathfrak{m}_v(A) \cdot \chi^{(3)}(G, A_{p^{\infty}}) \cdot \prod_{v \mid p} \chi^{(3)}(G_w, \widetilde{A}_{v, p^{\infty}})^{-1} \\ 
&= \vert \Sel(A/K) \vert \cdot \frac{\chi(G, A_{p^{\infty}})}{h_0^2} \cdot \prod_{v \nmid p \atop v \in S_{\ns}} \mathfrak{m}_v(A)  \cdot \prod_{v \mid p} h_{0,v}^2 \cdot \chi(G_w, \widetilde{A}_{v,p^{\infty}})^{-1}. \end{align*} The result then follows from Proposition \ref{chi}, which gives for each prime $v \mid p$ the identifications $\chi(G, A_{p^{\infty}})= \chi(G_w, \widetilde{A}_{v, p^{\infty}}) = 1$. \end{proof} \end{remark}

\begin{remark}[Surjectivity of the localization map.]

Recall that we write $\Lambda$ to denote the $\mathcal{O}$-Iwasawa algebra
$\Lambda(G) = \mathcal{O}[[G_{\mathfrak{p}^{\infty}}]]$. We now show 
that the localization map $\lambda_S(K_{\mathfrak{p}^{\infty}})$ 
is surjective if $\Sel_{p^{\infty}}(A/K_{\mathfrak{p}^{\infty}})$ is
$\Lambda$-cotorsion, at least when $A$ is known to be prinicipally
polarized. Given $L$ a Galois extension of $K$, let $\mathfrak{S}(A/L)$
denote the compactified Selmer group of $A$ over $L$, 
\begin{align*}\mathfrak{S}(A/L)&= \varprojlim_n \ker \left( H^1(G_S(L), A_{p^n}) 
\longrightarrow \bigoplus_{v \in S} J_v(L)\right). \end{align*}

\begin{proposition}\label{injAV} Let $\Omega = \Gal(L/K)$ be
an infinite, pro-$p$ group. If $A(L)_{p^{\infty}}$ is finite, then
there is a $\Lambda(\Omega)$-module injection \begin{align*}
\mathfrak{S}(A/L) \longrightarrow \Hom_{\Lambda(\Omega)}\left(
X(A/L), \Lambda(\Omega) \right).\end{align*}
\end{proposition}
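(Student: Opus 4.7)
The plan is to construct a natural $\Lambda(\Omega)$-adic Cassels--Tate pairing $\mathfrak{S}(A/L) \times X(A/L) \to \Lambda(\Omega)$ and then to show that the hypothesis $|A(L)_{p^{\infty}}| < \infty$ forces the adjoint map $\mathfrak{S}(A/L) \to \Hom_{\Lambda(\Omega)}(X(A/L), \Lambda(\Omega))$ to be injective.

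First I would set up a pairing at the finite level. For every finite Galois sub-extension $L'$ of $L/K$ with $\Gamma' = \Gal(L'/K)$ a finite $p$-group, apply Poitou--Tate global duality to the dual pair $(A_{p^n}, A^t_{p^n})$ at each finite level $n$, then take the inverse limit in $n$. Using the principal polarization to identify $A \cong A^t$ canonically, this yields a $\mathbb{Z}_p[\Gamma']$-equivariant pairing
\[ \langle \cdot, \cdot \rangle_{L'}: \mathfrak{S}(A/L') \times \Sel(A/L') \longrightarrow \mathbb{Q}_p/\mathbb{Z}_p \]
whose restriction to $T_p\Sha(A/L') \times \Sha(A/L')(p)$ recovers the classical Cassels--Tate pairing, and whose left kernel is contained in the maximal $p$-divisible $\mathbb{Z}_p$-submodule of $\mathfrak{S}(A/L')$ (namely the image of $\widehat{A(L')} \otimes \mathbb{Z}_p$ under Kummer theory).

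Next I would assemble these into an Iwasawa-theoretic pairing. Using the norm-compatibility of the compactified Selmer groups $\mathfrak{S}(A/L) = \varprojlim_{L'} \mathfrak{S}(A/L')$ and the corestriction-compatibility of the discrete Selmer groups, the pairings $\langle\cdot,\cdot\rangle_{L'}$ organize into a single $\Lambda(\Omega)$-bilinear pairing
\[ \mathfrak{S}(A/L) \times X(A/L) \longrightarrow \varprojlim_{L'} \mathbb{Z}_p[\Gamma'] = \Lambda(\Omega), \]
whose adjoint is the desired $\Lambda(\Omega)$-module homomorphism
\[ \Phi: \mathfrak{S}(A/L) \longrightarrow \Hom_{\Lambda(\Omega)}(X(A/L), \Lambda(\Omega)). \]
For injectivity, suppose $s \in \ker(\Phi)$, with images $s_{L'} \in \mathfrak{S}(A/L')$. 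Then $s_{L'}$ pairs trivially with all of $\Sel(A/L')$ under $\langle\cdot,\cdot\rangle_{L'}$, so $s_{L'}$ lies in the maximal $p$-divisible submodule of $\mathfrak{S}(A/L')$. Since this submodule is controlled by the Kummer image of $\widehat{A(L')} \otimes \mathbb{Z}_p$, passing to the inverse limit forces $s$ to lie in a module whose torsion is a quotient of $\varprojlim_{L'} A(L')_{p^{\infty}}$. The finiteness of $A(L)_{p^{\infty}}$ together with the infinite pro-$p$ structure of $\Omega$ then collapses this inverse limit to zero (multiplication by $\gamma - 1$ for any non-torsion $\gamma \in \Omega$ acts topologically nilpotently on the limit), whence $s = 0$.

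The main obstacle will be the careful matching, at the level of inverse limits, of the classical finite-level Cassels pairings with the norm and corestriction maps, so as to produce a genuinely $\Lambda(\Omega)$-linear pairing into $\Lambda(\Omega)$ rather than merely a compatible system of pairings into the finite group rings $\mathbb{Z}_p[\Gamma']$; this is the point where the principal polarization enters essentially, since it identifies the left and right factors and lets us compare the pairings in different layers. A secondary but nontrivial technical step is the identification of the kernel in the limit with something controlled by $A(L)_{p^{\infty}}$, which requires a Mittag-Leffler-type argument exploiting the pro-$p$ structure of $\Omega$ together with Lemma \ref{finitetorsion}-style control on torsion in ring class towers.
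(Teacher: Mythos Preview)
Your overall strategy---build a $\Lambda(\Omega)$-adic pairing from the Cassels--Tate/Flach pairings at finite level and show its adjoint is injective---is exactly the approach of Hachimori--Venjakob \cite[Theorem 7.1]{HV}, which is all the paper invokes here (the paper gives no independent argument). One point in your sketch needs repair, however: $\mathfrak{S}(A/L')$ is a finitely generated ${\bf{Z}}_p$-module, so it has no nonzero divisible submodule, and the Kummer image of $A(L')\otimes{\bf{Z}}_p$ is certainly not divisible either; you have imported the description of the kernel from Flach's pairing on the \emph{discrete} Selmer groups, where it is correct, to the compact side, where it is not. The correct statement is that the kernel you must control is the ${\bf{Z}}_p$-torsion of $\mathfrak{S}(A/L)$ (any map into $\Hom_{\Lambda(\Omega)}(X,\Lambda(\Omega))$ automatically kills torsion), and this torsion injects into $\varprojlim_{L'} A(L')_{p^{\infty}}$ via the boundary map coming from $0 \to T_p(A) \to T_p(A)\otimes{\bf{Q}}_p \to A_{p^{\infty}} \to 0$. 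Your final step---that finiteness of $A(L)_{p^{\infty}}$ together with the infinite pro-$p$ structure of $\Omega$ forces this inverse limit under norm maps to vanish---is correct and is what finishes the argument.
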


\begin{proof}
The result is well-known, see for instance \cite[Theorem 7.1]{HV},
the proof of which carries over without change. \end{proof}

\begin{theorem}\emph{(Cassels-Poitou-Tate exact sequence for abelian varieties).}
Let $A^t$ denote the dual abelian variety associated to $A$. Let 
$\mathfrak{S}(A/K_{\mathfrak{p}^{\infty}})^{\vee}$ denote the Pontryagin dual
of the compactified Selmer group $\mathfrak{S}(A/K_{\mathfrak{p}^{\infty}})$.
There is a canonical exact sequence \begin{align}\label{CPTAV}
\begin{CD} 0 @>>> \Sel(A/K_{\mathfrak{p}^{\infty}})
@>>> H^1(G_S(K_{\mathfrak{p}^{\infty}}), A_{p^{\infty}}) @>>>
\bigoplus_{v\in S} H^1(K_{\mathfrak{p}^{\infty}, w}, A)(p) \\
@>>> \mathfrak{S}^{\vee}(A^t/K_{\mathfrak{p}^{\infty}}) @>>>
H^2(G_S(K_{\mathfrak{p}^{\infty}}), A_{p^{\infty}}) @>>> 0.
\end{CD}\end{align} \end{theorem}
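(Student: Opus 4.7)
The plan is to derive this sequence by taking limits of the classical Cassels--Poitou--Tate exact sequence at finite level. For each finite extension $L/K$ contained in $K_{\mathfrak{p}^{\infty}}$ and each integer $n \geq 1$, the 9-term Poitou--Tate exact sequence applied to the finite Galois module $A_{p^n}$, together with the local Kummer sequences and local Tate duality combined with the Weil pairing identification of $A_{p^n}$ with $\Hom(A^t_{p^n}, \mu_{p^n})$, yields the five-term exact sequence
\begin{align*}
0 &\longrightarrow \Sel_{p^n}(A/L) \longrightarrow H^1(G_S(L), A_{p^n}) \longrightarrow \bigoplus_{v \in S} H^1(L_w, A)[p^n] \\
&\longrightarrow \Sel_{p^n}(A^t/L)^{\vee} \longrightarrow H^2(G_S(L), A_{p^n}) \longrightarrow 0.
\end{align*}
This is the standard Cassels dual exact sequence for abelian varieties at finite level, as recorded for instance in Milne's \emph{Arithmetic Duality Theorems}.

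Next I would pass to the direct limit in $n$. Since filtered direct limits are exact and Galois cohomology of discrete torsion modules commutes with such limits, the first, second, third, and fifth terms become respectively $\Sel(A/L)$, $H^1(G_S(L), A_{p^{\infty}})$, $\bigoplus_{v\in S} H^1(L_w, A)(p)$, and $H^2(G_S(L), A_{p^{\infty}})$. For the fourth term, the contravariance of Pontryagin duality gives
\begin{align*}
\varinjlim_n \Sel_{p^n}(A^t/L)^{\vee} \cong \bigl( \varprojlim_n \Sel_{p^n}(A^t/L) \bigr)^{\vee} = \mathfrak{S}(A^t/L)^{\vee},
\end{align*}
where the projective limit is taken along multiplication by $p$, as in the definition of $\mathfrak{S}(A^t/L)$ recalled above.

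Finally I would pass to the direct limit over finite $L \subset K_{\mathfrak{p}^{\infty}}$, using restriction on cohomology and the fact that each of the Galois cohomology groups and the $p^{\infty}$-Selmer group on the left is defined as the direct limit of its $L$-level counterparts. On the compactified side, $\mathfrak{S}(A^t/K_{\mathfrak{p}^{\infty}}) = \varprojlim_L \mathfrak{S}(A^t/L)$ with corestriction transition maps, so by Pontryagin duality once more we have $\varinjlim_L \mathfrak{S}(A^t/L)^{\vee} = \mathfrak{S}(A^t/K_{\mathfrak{p}^{\infty}})^{\vee}$, and exactness is preserved.

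The main obstacle will be the bookkeeping around these double limit passages, in particular verifying that the Poitou--Tate connecting maps are compatible both with the multiplication-by-$p$ transitions defining $\mathfrak{S}(A^t/L)$ and with the corestriction transitions as $L$ varies inside $K_{\mathfrak{p}^{\infty}}$. Once this compatibility is settled, the canonicity of the sequence $(\ref{CPTAV})$ is inherited from that of the five-term Cassels sequence at finite level.
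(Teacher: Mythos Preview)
Your proposal is correct and follows essentially the same route as the paper: the paper's proof is simply a reference to Coates--Sujatha \cite[\S 1.7]{CS}, noting that the deduction from the generalized Cassels--Poitou--Tate exact sequence \cite[Theorem 1.5]{CS} carries over once one keeps track of the dual abelian variety $A^t$. You have spelled out precisely that deduction --- start from the finite-level five-term sequence for $A_{p^n}$ over $L$, take the direct limit in $n$, then the direct limit over $L \subset K_{\mathfrak{p}^{\infty}}$, using Pontryagin duality to convert the compact Selmer term --- which is exactly the argument Coates--Sujatha give in the elliptic-curve case and which the paper is invoking.
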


\begin{proof} See Coates-Sujatha \cite[$\S$ 1.7]{CS}. The same deduction
using the generalized Cassels-Poitou-Tate exact sequence
\cite[Theorem 1.5]{CS} works here, keeping track of the dual abelian
variety $A^t$. \end{proof}

\begin{corollary}\label{surjlocAV} Assume that $A$ is principally
polarized. If $\Sel(A/K_{\mathfrak{p}^{\infty}})$ is $\Lambda$-cotorsion, 
then the localization map $\lambda_S(K_{\mathfrak{p}^{\infty}})$ is
surjective, i.e. there is a short exact sequence
\begin{align*}0 \longrightarrow \Sel(A/K_{\mathfrak{p}^{\infty}})
\longrightarrow  H^1(G_S(K_{\mathfrak{p}^{\infty}}), A_{p^{\infty}}) \longrightarrow 
\bigoplus_{v \in S} J_v(K_{\mathfrak{p}^{\infty}}) \longrightarrow 0.\end{align*}
\end{corollary}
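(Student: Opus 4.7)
The plan is to deduce the surjectivity directly from the Cassels--Poitou--Tate exact sequence $(\ref{CPTAV})$, by showing that the compactified Selmer group $\mathfrak{S}(A^t/K_{\mathfrak{p}^{\infty}})$ vanishes under the hypotheses. Once this vanishing is established, its Pontryagin dual $\mathfrak{S}^{\vee}(A^t/K_{\mathfrak{p}^{\infty}})$ is zero, so the exact sequence $(\ref{CPTAV})$ collapses to give the desired short exact sequence with $\lambda_S(K_{\mathfrak{p}^{\infty}})$ surjective.

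The key input is Proposition \ref{injAV}. Using the principal polarization, we identify $A^t$ with $A$ in the canonical way, so that it suffices to show $\mathfrak{S}(A/K_{\mathfrak{p}^{\infty}}) = 0$. Take $L = K_{\mathfrak{p}^{\infty}}$ and $\Omega = G$; this group is infinite and pro-$p$, and by Lemma \ref{finitetorsion} the torsion subgroup $A(K_{\mathfrak{p}^{\infty}})_{\tors}$ is finite, so in particular $A(K_{\mathfrak{p}^{\infty}})_{p^{\infty}}$ is finite. The hypotheses of Proposition \ref{injAV} are therefore satisfied, yielding an injection
\begin{align*}
\mathfrak{S}(A/K_{\mathfrak{p}^{\infty}}) &\hookrightarrow \Hom_{\Lambda(G)}\!\left( X(A/K_{\mathfrak{p}^{\infty}}), \Lambda(G)\right),
\end{align*}
where $X(A/K_{\mathfrak{p}^{\infty}})$ denotes the Pontryagin dual of $\Sel(A/K_{\mathfrak{p}^{\infty}})$.

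Now I invoke the standing hypothesis: $\Sel(A/K_{\mathfrak{p}^{\infty}})$ is $\Lambda(G)$-cotorsion, which is to say $X(A/K_{\mathfrak{p}^{\infty}})$ is a torsion $\Lambda(G)$-module. Since $\Lambda(G) \cong \mathcal{O}[[T_1, \ldots, T_{\delta}]]$ is a regular (in particular integral) commutative local ring, any $\Lambda(G)$-linear map from a torsion module to $\Lambda(G)$ must vanish, so $\Hom_{\Lambda(G)}(X(A/K_{\mathfrak{p}^{\infty}}), \Lambda(G)) = 0$. Combining with the injection above forces $\mathfrak{S}(A/K_{\mathfrak{p}^{\infty}}) = 0$, and dualizing gives $\mathfrak{S}^{\vee}(A^t/K_{\mathfrak{p}^{\infty}}) = 0$.

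Inserting this vanishing into $(\ref{CPTAV})$ produces the exact sequence
\begin{align*}
0 \longrightarrow \Sel(A/K_{\mathfrak{p}^{\infty}}) \longrightarrow H^1(G_S(K_{\mathfrak{p}^{\infty}}), A_{p^{\infty}}) \xrightarrow{\lambda_S(K_{\mathfrak{p}^{\infty}})} \bigoplus_{v \in S} J_v(K_{\mathfrak{p}^{\infty}}) \longrightarrow 0,
\end{align*}
which is precisely the asserted short exact sequence. The only non-routine step is arranging the vanishing of $\mathfrak{S}(A/K_{\mathfrak{p}^{\infty}})$, and this in turn rests entirely on invoking Proposition \ref{injAV} together with the cotorsion hypothesis; the principal polarization is essential only in order to translate the conclusion about $A^t$ (which is what appears in the Cassels--Poitou--Tate sequence) into a statement about $A$ (which is what the cotorsion hypothesis controls).
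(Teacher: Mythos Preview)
Your proof is correct and follows exactly the same approach as the paper: use Proposition~\ref{injAV} together with the cotorsion hypothesis to force $\mathfrak{S}(A/K_{\mathfrak{p}^{\infty}})=0$, invoke the principal polarization to pass to $A^t$, and then read off surjectivity from the Cassels--Poitou--Tate sequence~$(\ref{CPTAV})$. If anything, you are slightly more careful than the paper in explicitly checking the finiteness hypothesis of Proposition~\ref{injAV} via Lemma~\ref{finitetorsion} and in justifying why $\Hom_{\Lambda}(X,\Lambda)$ vanishes for torsion $X$.
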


\begin{proof}

If $\Sel(A/K_{\mathfrak{p}^{\infty}})$ is $\Lambda$-cotorsion, then
$X(A/K_{\mathfrak{p}^{\infty}})$ is $\Lambda$-torsion by duality. It
follows that $\Hom_{\Lambda}(X(A/K_{\mathfrak{p}^{\infty}}),\Lambda)
= 0$. Hence by Proposition \ref{injAV}, we find that $\mathfrak{S}(A/K_{\mathfrak{p}^{\infty}})=0$. 
It follows that $\mathfrak{S}^{\vee}(A/K_{\mathfrak{p}^{\infty}})=0$. Since
we assume that $A$ is principally polarized, we can assume
that there is an isomorphism $A \cong A^t$, in which case
it also follows that $\mathfrak{S}^{\vee}(A^t/K_{\mathfrak{p}^{\infty}})=0$.
The claim then follows from $(\ref{CPTAV})$. Observe that we also
obtain from this the vanishing of $H^2(G_S(K_{\mathfrak{p}^{\infty}}), A_{p^{\infty}})$. 
\end{proof} Hence, we obtain from Corollary \ref{mainEC+1} the following
main result of this section.

\begin{theorem}\label{main-definite} 

Assume that $A$ is a principally polarized, and that $A$ has good ordinary reduction at each prime above $p$ in $K$. Assume additionally that $\Sel(A/K)$
is finite. If $\Sel_{p^{\infty}}(A/K_{\mathfrak{p}^{\infty}})$ is $\Lambda(G)$-cotorsion, then 

\begin{align*} \chi(G,\Sel(A/K_{\mathfrak{p}^{\infty}})) = \frac{\vert  \Sha(A/K)(p) \vert}{\vert A(K)(p)\vert^2} \cdot \prod_{v \mid p} 
\vert \widetilde{A}(\kappa_v)(p) \vert^2 \cdot \prod_{v \nmid p \atop v \in S_{\ns}} \ord_p\left( \frac{c_v(A)}{L_v(A,1)}\right). \end{align*}\end{theorem}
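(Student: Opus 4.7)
The plan is to reduce the claim to Corollary~\ref{mainEC+1} and then identify the three pieces of its formula with the corresponding terms on the right-hand side of the theorem statement. First, since $A$ is principally polarized and $\Sel(A/K_{\mathfrak{p}^{\infty}})$ is $\Lambda(G)$-cotorsion by assumption, Corollary~\ref{surjlocAV} guarantees that the global-to-local map $\lambda_S(K_{\mathfrak{p}^{\infty}})$ is surjective. Corollary~\ref{mainEC+1} then applies directly and yields
\begin{align*}
\chi(G, \Sel(A/K_{\mathfrak{p}^{\infty}})) = \frac{\vert \Sel(A/K) \vert}{h_0^2} \cdot \prod_{v \mid p} h_{0,v}^2,
\end{align*}
so the remaining task is to match each factor with its counterpart in the claimed formula.

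For the global numerator, I would observe that the finiteness of $\Sel(A/K)$ forces the Mordell--Weil group $A(K)$ to have ${\bf Z}$-rank zero: otherwise $A(K) \otimes {\bf Q}_p/{\bf Z}_p$ would be an infinite divisible subgroup of $\Sel(A/K)$, contradicting finiteness. It then follows from the descent sequence $0 \to A(K) \otimes {\bf Q}_p/{\bf Z}_p \to \Sel(A/K) \to \Sha(A/K)(p) \to 0$ recalled in the introduction that the left-hand term vanishes, whence $\vert \Sel(A/K) \vert = \vert \Sha(A/K)(p) \vert$. For the global denominator, the definition gives $h_0 = \vert H^0(G, A_{p^{\infty}}) \vert = \vert A(K)_{p^{\infty}} \vert = \vert A(K)(p) \vert$ directly, using Lemma~\ref{finitetorsion} to see that $A_{p^{\infty}} = A(K_{\mathfrak{p}^{\infty}})_{p^{\infty}}$ is a legitimate finite group on which $G$ acts trivially on $G$-invariants.

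The main step is the local identification $h_{0,v} = \vert \widetilde{A}(\kappa_v)(p) \vert$ for each prime $v \mid p$. The plan is to invoke the Coates--Greenberg filtration $0 \to C \to A_{p^{\infty}} \to D \to 0$ already employed in the derivation of Theorem~\ref{LA}: under good ordinary reduction, and in view of the deeply ramified nature of $K_{\mathfrak{p}^{\infty},w}$ in the sense of \cite{CG}, the $G_w$-module $\widetilde{A}_{v, p^{\infty}}$ is canonically identified with the \'etale quotient $D$. By Proposition~\ref{decomposition}, the prime $v$ is totally ramified in $K_{\mathfrak{p}^{\infty}}$, so the residue field does not grow beyond $\kappa_v$, and the $G_w$-invariants of $\widetilde{A}_{v, p^{\infty}}$ therefore collapse to $\widetilde{A}(\kappa_v)(p)$.

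Assembling these three identifications reproduces the right-hand side of the theorem. The main obstacle is precisely this last local identification: one must verify that the formal group contributions $\widehat{A}_v(\overline{\mathfrak{m}})$ do not contaminate the $G_w$-invariants of the full $p^{\infty}$-torsion, and that the comparison between $\widetilde{A}_{v, p^{\infty}}$ and $D$ is compatible with the $G_w$-action. Since the required local Galois cohomology computations have already been carried out in Lemmas~\ref{I}, \ref{00} and in Theorem~\ref{LA}, what remains for this step is essentially bookkeeping.
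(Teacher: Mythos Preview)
Your proposal is correct and follows essentially the same route as the paper: invoke Corollary~\ref{surjlocAV} to get surjectivity of $\lambda_S(K_{\mathfrak{p}^{\infty}})$, apply Corollary~\ref{mainEC+1}, and then unwind the notation. The paper's own proof simply says ``follows from the definitions, as well as the standard identification in this setting of $\Sel(A/K)$ with $\Sha(A/K)[p^{\infty}]$'', whereas you have spelled out each of the three identifications $|\Sel(A/K)| = |\Sha(A/K)(p)|$, $h_0 = |A(K)(p)|$, and $h_{0,v} = |\widetilde{A}(\kappa_v)(p)|$ explicitly; in particular your justification of the last one via total ramification of $v\mid p$ (Proposition~\ref{decomposition}) and the Coates--Greenberg \'etale quotient is exactly the content hidden behind the paper's phrase ``from the definitions''.
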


\begin{proof} The computations above show that the Euler characteristic is well-defined, and given by the formula 
\begin{align*} \chi(G, \Sel(A/K_{\mathfrak{p}^{\infty}})) = \frac{\vert \Sel(A/K)\vert}{h_0^2} \cdot \prod_{v \mid p} h_{0,v}^2 
\cdot \prod_{v \nmid p \atop v \in S_{\ns}} \mathfrak{m}_v(A). \end{align*} The result then follows from the definitions, as well as 
the standard identification in this setting of $\Sel(A/K)$ with $\Sha(A/K)[p^{\infty}]$. \end{proof} \end{remark}

\section{The indefinite case}

Let us keep all of the setup above, but with the following crucial condition.
Let $N \subset \mathcal{O}_F$ denote the arithmetic conductor of $A$. Recall
that we write $\eta$ to denote the quadratic character associated to $K/F$. We 
assume here the that so-called {\it{weak Heegner hypothesis}} holds,
which is that $\eta(N) = (-1)^{d-1}$ (cf. \cite[0.]{Ho2}). In this setting, the Selmer
group $\Sel(A/K_{\mathfrak{p}^{\infty}})$ is generally not $\Lambda$-cotorsion, but 
rather free of rank one over $\Lambda$. For results in this direction, see Howard 
\cite[Theorem B]{Ho2}, which generalizes earlier results of Bertolini \cite{Be} and 
Nekovar (cf. \cite{Nek08} or \cite{Nek09}).

\begin{remark}[The conjectures of Perrin-Riou and Howard.]  

We refer the reader to \cite{Ho2} or \cite{PR87} for background. The version of the Iwasawa main conjectures posed in these works has the following interpretation in terms 
of Euler characteristic formulae. Recall that we fix a prime $\mathfrak{p}$ above $p$ in $F$. For each finite extension $L$ of $K$, we consider the two $\mathfrak{p}$-primary 
Selmer groups $\Sel_{\mathfrak{p}^{\infty}}(A/L)$ and $\mathfrak{S}(A/L)$, which can be defined implicitly via their inclusion in the descent exact sequences 
\begin{align*} 0 \rightarrow A(L) \otimes_{\mathcal{O}_F} \mathcal{O}_{F_{\mathfrak{p}}} 
\rightarrow \mathfrak{S}(A/L) \rightarrow \varprojlim_n \Sha(A/L)[\mathfrak{p}^n] \\
0 \rightarrow A(L) \otimes_{\mathcal{O}_F} \left( F_{\mathfrak{p}}/\mathcal{O}_{F_{\mathfrak{p}}} \right)
\rightarrow \Sel_{\mathfrak{p}^{\infty}}(A/L) \rightarrow \Sha(A/L)[\mathfrak{p}^{\infty}] \rightarrow 0.
\end{align*} The abelian variety $A$ comes equipped with a family of Heegner (or CM) points defined 
over the ring class extensions $K[\mathfrak{p}^n]$, as described e.g. in \cite[$\S 1.2$]{Ho2}, which gives
rise to the following submodule. Given an integer $n\geq 0$, let $h_n$ denote the image under the norm from 
$K[\mathfrak{p}^{n+1}]$ to $K_{\mathfrak{p}^n}$ of the Heegner/CM point of conductor $\mathfrak{p}^{n+1}$.
Let $\mathfrak{H}_n$ denote the $\Lambda$-module generated by all the images $h_m$ with $m \leq n$. Let 
$\mathfrak{H}_{\infty}= \varprojlim_n \mathfrak{H}_n$ denote the associated {\it{Heegner module}}. 
Taking the usual limits, let us also define the finitely-generated $\Lambda$-modules 
\begin{align*}\mathfrak{S}_{\infty} = \mathfrak{S}(A/K_{\mathfrak{p}^{\infty}}) &= \varprojlim_n 
\mathfrak{S}(A/K_{\mathfrak{p}^n})  \\ \Sel_{\mathfrak{p}^{\infty}}(A/K_{\mathfrak{p}^{\infty}}) 
&= \varinjlim_n \Sel_{\mathfrak{p}^{\infty}}(A/K_{\mathfrak{p}^n}).\end{align*} Let $X(A/K_{\mathfrak{p}^{\infty}})$ denote the Pontryagin 
dual of $\Sel_{\mathfrak{p}^{\infty}}(A/K_{\mathfrak{p}^{\infty}})$, with $X(A/K_{\mathfrak{p}^{\infty}})$ its $\Lambda$-cotorsion submodule. 
Given an element $\lambda \in \Lambda$, let $\lambda^*$ denote the image of $\lambda$ under the involution of $\Lambda$ induced by 
inversion in $G=G_{\mathfrak{p}^{\infty}}$. The main conjecture posed by Howard \cite[Theorem B]{Ho2}, following Perrin-Riou \cite{PR87}, 
can then be summarized as in the following way. 

\begin{conjecture}[Iwasawa main conjecture] If $A$ has ordinary reduction at $\mathfrak{p}$, then

\begin{itemize}
\item[(i)] The $\Lambda$-modules $\mathfrak{H}_{\infty}$ and $\mathfrak{S}_{\infty}$ are torsionfree of rank one.
\item[(ii)] The dual Selmer group $X(A/K_{\mathfrak{p}^{\infty}})$ has rank one as a $\Lambda$-module.
\item[(iii)] There is a pseudoisomorphism of $\Lambda$-modules 
\begin{align*}X(A/K_{\mathfrak{p}^{\infty}})_{\tors} \longrightarrow M \oplus M \oplus M\mathfrak{p}.
\end{align*} Here, the $\Lambda$-characteristic power series $\operatorname{char}(M)$ is prime to 
$\mathfrak{p}\Lambda$ with the property that $\operatorname{char}(M) = \operatorname{char}(M)^*$, 
and $\operatorname{char}(M\mathfrak{p})$ is a power of $\mathfrak{p}\Lambda$.
\item[(iv)] The following equality of ideals holds in $\Lambda$, 
at least up to powers of $\mathfrak{p}\Lambda$:
\begin{align*}\left( \operatorname{char}(M) \right) = 
\left( \operatorname{char}\left(\mathfrak{S}_{\infty}/ \mathfrak{H}_{\infty} \right) \right).\end{align*} 
\end{itemize} \end{conjecture} 

\begin{remark} Note that many cases of this conjecture (with one divisibility in (iv)) are established by Howard \cite[Theorem B]{Ho2}, 
using the relevant nonvanishing theorem of Cornut-Vatsal \cite{CV} to ensure nontriviality. \end{remark}

Let us now re-state this conjecture in terms of Euler characteristic formulae. Let  $\Sel_{\mathfrak{p}^{\infty}}(A/K_{\mathfrak{p}^{\infty}})_{\cotors}$ 
denote the $\Lambda$-cotorsion submodule of $\Sel_{\mathfrak{p}^{\infty}}(A/K_{\mathfrak{p}^{\infty}})$. 
Recall that given an element $\lambda \in \Lambda$, we write $\lambda(0)$ to denote the image of $\lambda$ under the natural map 
$\Lambda \longrightarrow {\bf{Z}}_p$. Recall as well that we let $\mathfrak{K}_n$ denote the Artin symbol of $\mathfrak{d}_n = (\sqrt{D}\mathcal{O}_K) \cap \mathcal{O}_{p^n}$. 
Here, $D$ denotes the absolute discriminant of $K$, and $\mathcal{O}_{p^n}$ the 
$\mathcal{O}_F$-order of conductor $\mathfrak{p}^n$ in $K$, i.e. 
$\mathcal{O}_{p^n}= \mathcal{O}_F + \mathfrak{p}^n \mathcal{O}_K$. 
Let $\mathfrak{K} = \varprojlim_n \mathfrak{K}_n$. 
We let $\langle ~,~\rangle_{A, K[p^n]}$ denote the $p$-adic height pairing 
\begin{align*} \langle ~,~\rangle_{A, K[p^n]}: A^t(K[p^n]) \times A(K[p^n]) 
&\longrightarrow {\bf{Q}}_p, \end{align*} defined e.g. in \cite[(9), $\S 3.3$]{Ho} 
or \cite{PR91}. Assume for simplicity that $A$ is principally polarized. 
There exists by a construction of Perrin-Riou \cite{PR87} (see also \cite{PR91}) 
a $p$-adic height pairing \begin{align*} \mathfrak{h}_n: \mathfrak{S}(A/K_{p^n}) 
\times \mathfrak{S}(A/K_{p^n}) &\longrightarrow c^{-1}{\bf{Z}}_p \end{align*} 
whose restriction to the image of the Kummer map $A(K[p^n]) \otimes {\bf{Z}}_p 
\rightarrow \mathfrak{S}(E/K[p^n])$ coincides with the pairing $\langle ~,~\rangle_{A, K[p^n]}$ 
after identifying $A^t \cong A$ in the canonical way (cf. \cite[Proposition 0.0.4]{Ho}). 
Here, $c \in {\bf{Z}}_p$ is some integer that does not depend on the choice of $n$. 
Following \cite{PR87} and \cite{Ho}, these pairings can be used to construct a pairing 
\begin{align*} \mathfrak{h}_{\infty}: \mathfrak{S}_{\infty} \times \mathfrak{S}_{\infty} 
&\longrightarrow c^{-1}{\bf{Z}}_p[[G]] \\ (\varprojlim a_n, \varprojlim_n b_n) 
&\longmapsto \varprojlim_n \sum_{\sigma \in G_{p^n}} \mathfrak{h}_n(a_n, b_n^{\sigma}) 
\cdot \sigma. \end{align*} We then define the {\it{$p$-adic regulator $\mathcal{R} = 
\mathcal{R}(A/K_{p^{\infty}})$}} to be the image of $\mathfrak{S}_{\infty}$ in $c^{-1}{\bf{Z}}_p[[G]]$ under this pairing 
$\mathfrak{h}_{\infty}$. Let ${\bf{e}}$ denote the natural projection \begin{align*} {\bf{e}}: {\bf{Z}}_p[[G[p^{\infty}]]] 
&\longrightarrow {\bf{Z}}_p[[G]]. \end{align*} We then make the following conjecture in this situation.

\begin{conjecture}\label{-1} 
Let $A$ be a principally polarized abelian variety of arithmetic 
conductor $N$ defined over $F$ having good ordinary reduction 
at each prime above $p$ in $K$. Assume as well that $A$ satisfies 
the weak-Heegner hypothesis with respect to $N$ and $K$, i.e. that 
$\eta(N) = (-1)^{d-1}$. Then, the $G$-Euler characteristic 
$\chi(G, \Sel(A/K_{\mathfrak{p}^{\infty}})_{\cotors})$ is 
well-defined, and given by the expression 
\begin{align*} \chi(G, \Sel(A/K_{\mathfrak{p}^{\infty}})_{\cotors}) &= \vert 
\operatorname{char}(\mathfrak{S}_{\infty}/\mathfrak{H}_{\infty})(0) \cdot 
\operatorname{char}(\mathfrak{S}_{\infty}/\mathfrak{H}_{\infty})^*(0) 
\cdot \mathfrak{R}(0)\vert_p^{-1}. \end{align*} Here, $\mathfrak{R}$ 
denotes the ideal $({\bf{e}}(\mathfrak{K}))^{-1} \mathcal{R}$, which 
lies in the ${\bf{Z}}_p$-Iwasawa algebra ${\bf{Z}}_p[[G]]$. Thus, we
also conjecture that the $p$-adic regulator $\mathcal{R} 
= \mathcal{R}(A/K_{p^{\infty}})$ is not identically zero, and 
hence that the $p$-adic height pairing $\mathfrak{h}_{\infty}$ 
is nondegenerate.\end{conjecture} \end{remark}

\begin{remark}[Modular elliptic curves over imaginary quadratic fields.]

Let us now give a more precise description of the conjectural formula for 
elliptic curves defined over the totally real field $F = {\bf{Q}}$. We first recall the 
$\Lambda$-adic Gross-Zagier theorem of Howard \cite[Theorem B]{Ho}.
Let $p$ be an odd prime. Fix embeddings $\overline{\bf{Q}} \rightarrow \overline{\bf{Q}}_p$ 
and $\overline{\bf{Q}} \rightarrow {\bf{C}}$. Let $E$ be an elliptic curve of conductor $N$ defined 
over ${\bf{Q}}$. Hence, $E$ is modular by fundamental work of Wiles, Taylor-Wiles, and 
Breuil-Conrad-Diamond-Taylor. Let $f \in S_2(\Gamma_0(N))$ denote the cuspidal newform 
of weight $2$ and level $N$ associated to $E$ by modularity. Let $K$ be an imaginary quadratic 
field of discriminant $D$, and associated quadratic character $\eta$. We impose the following

\begin{hypothesis}\label{hyp}
Assume (i) that $f$ is {\it{$p$-ordinary}} in the sense that the the image of its $T_p$-eigenvalue under the fixed embedding 
$\overline{{\bf{Q}}} \rightarrow \overline{\bf{Q}}_p$ is a $p$-adic unit, (ii) that the {\it{Heegner hypothesis}} holds with 
respect to $N$ and $K$, i.e. that each prime divisor of $N$ split in $K$, and (iii) that $(p, ND)=1$. 
\end{hypothesis} Let us for clarity also fix the following notations here. 
We write $K[p^{\infty}]$ to denote the $p^{\infty}$-ring class tower over $K$, 
with Galois group $\varOmega=\Gal(K[p^{\infty}]/K)$. We then write $K_{p^{\infty}}$ to denote the dihedral or anticyclotomic 
${\bf{Z}}_p$-extension of $K$, with Galois group $\Omega = \Gal(K_{p^{\infty}}/K) \approx {\bf{Z}}_p$. 
We also write $K(\mu_{p^{\infty}})$ to denote the extension of $K$ obtained 
by adjoining all primitive, $p$-th power roots of unity, with Galois group $\varGamma = \Gal(K(\mu_{p^{\infty}})/K)$. 
We then write $K^{\cyc}$ to denote the cyclotomic ${\bf{Z}}_p$-extension of $K$, with Galois group $\Gamma = 
\Gal(K^{\cyc}/K) \approx {\bf{Z}}_p$. We shall consider the compositum extension 
$R_{\infty} = K[p^{\infty}] \cdot K(\mu_{p^{\infty}})$ with Galois group 
$\mathcal{G} = \Gal(R_{\infty}/K) = \varOmega \times \varGamma$. We 
shall also also consider the ${\bf{Z}}_p^2$-extension $K_{\infty}$ of $K$
contained in $R_{\infty}$, with Galois group $G = \Gal(K_{\infty}/K) = 
\Omega \times \Gamma \approx {\bf{Z}}_p^2$. 
The construction of Hida \cite{Hi} and Perrin-Riou \cite{PR88} (whose integrality is shown 
in \cite[Theorem 2.9]{VO3}) gives a two-variable $p$-adic $L$-function 
$$\mathcal{L}_f \in {\bf{Z}}_p[[\mathcal{G}]] = {\bf{Z}}_p[[ \varOmega \times \varGamma]].$$ 
This element interpolates the algebraic values $$L(E/K, \mathcal{W}, 1)/8\pi^2\langle f, f \rangle_N 
= L(f \times g_{\mathcal{W}}, 1)/8 \pi^2 \langle f, f \rangle,$$ which we view as elements of 
$\overline{\bf{Q}}_p$ under our fixed embedding $\overline{\bf{Q}} \longrightarrow \overline{\bf{Q}}_p$.
Here, $\mathcal{W}$ is any finite order character of $\mathcal{G}$, and $\langle f, f \rangle_N$ the 
Petersson inner product of $f$ with itself. Let us now commit a minor abuse of notation in also
writing $\mathcal{L}_f$ to denote the image of this $p$-adic $L$-function in the Iwasawa algebra
${\bf{Z}}_p[[\varOmega \times \Gamma]]$. Fixing a topological generator $\gamma$ of $\Gamma$, 
we can then write this two-variable $p$-adic $L$-function $\mathcal{L}_f$ as a power series in the 
cyclotomic variable $(\gamma-1)$, \begin{align}\label{exp} \mathcal{L}_f &= \mathcal{L}_{f, 0} 
+ \mathcal{L}_{f, 1}(\gamma -1) + \mathcal{L}_{f,2}(\gamma -1)^2 + \ldots \end{align} Here, the 
coefficients $\mathcal{L}_{f, i}$ are elements of the completed group ring ${\bf{Z}}_p[[\varOmega]]$. 

Now, since we assume the Heegner hypothesis in Hypothesis \ref{hyp} (ii), it is a well known 
consequence that the functional equation for $L(E/K, s) = L(f \times g_K, s)$ forces the central
value $L(E/K, 1) = L(f \times g_K, 1)$ to vanish. In fact, it is also well known that $L(E/K, \mathcal{W}, 1) 
= L(f \times g_{\mathcal{W}}, 1)$ vanishes for $\mathcal{W}$ any finite order character
of $\varOmega$ in this situation (see e.g. \cite[$\S 1$]{CV}). Hence, in this situation, 
it follows from the interpolation property of $\mathcal{L}_f$ that we must have $\mathcal{L}_{f,0}=0$. 
The main result of Howard \cite{Ho} shows that the linear term $\mathcal{L}_{f,1}$ in this setting is related 
to height pairings of Heegner points in the Jacobian $J_0(N)$ of the modular curve $X_0(N)$. To be more 
precise, the Heegner hypothesis implies for each integer $n \geq 0$ the existence of a family of Heegner 
points $h_n \in X_0(N)({\bf{C}})$ of conductor $p^n$. These Heegner points $h_n$ are defined in the usual way: 
each $h_n$ is a cyclic $N$-isogeny $h_n: \mathcal{E}_n \rightarrow \mathcal{E}_n'$ of elliptic curves 
$\mathcal{E}_n$ and $\mathcal{E}_n'$ having exact CM by the ${\bf{Z}}$-order $\mathcal{O}_{p^n}= 
{\bf{Z}} + p^n \mathcal{O}_K$. As explained in \cite{Ho}, the family $\lbrace h_n \rbrace_n$ can be 
chosen in such a way that the following diagram commutes, where the vertical arrows are $p$-isogenies: 
\begin{align*} \begin{CD}\mathcal{E}_n @>{h_n}>> \mathcal{E}_{n}'  \\ @VVV @VVV \\  \mathcal{E}_{n-1} 
@>{h_{n-1}}>> \mathcal{E}_{n-1}'. \end{CD}\end{align*} This in particular makes $\mathcal{E}_{n-1}$ 
a quotient of $\mathcal{E}_n$ by its $p\mathcal{O}_{p^{n-1}}$-torsion, and $\mathcal{E}_{n-1}'$ 
a quotient of $\mathcal{E}_{n}'$ by its $p\mathcal{O}_{p^{n-1}}$-torsion. By the theory of complex 
multiplication, the CM elliptic curves $\mathcal{E}_n$ and $\mathcal{E}_n'$, as well as the isogeny 
$h_n$ between them, can all be defined over the ring class field $K[p^n]$ of conductor $p^n$ over $K$. 
Hence, we have for each integer $n \geq 0$ a family of Heegner points $h_n \in X_0(N)(K[p^n])$. 
Let us commit an abuse of notation in also writing $h_n$ to denote the image of any such point 
under the usual embedding $X_0(N) \rightarrow J_0(N)$ that sends the cusp at infinity to the origin.

Let ${\bf{T}}$ denote the ${\bf{Q}}$-algebra generated by the Hecke operators $T_l$ at primes $l$ 
not dividing $N$ acting on $J_0(N)$. The semisimplicity of ${\bf{T}}$ induces ${\bf{T}}$-module 
isomorphism \begin{align*} J_0(N)(K[p^n]) \longrightarrow \bigoplus_{\beta} J(K[p^n])_{\beta}. 
\end{align*} Here, the direct sum runs over all $\Gal(\overline{\bf{Q}}_p/{\bf{Q}}_p)$-orbits of 
algebra homomorphisms $\beta: {\bf{T}} \rightarrow \overline{\bf{Q}}_p$, and each summand 
is fixed by the natural action of $\Gal(K[p^n]/{\bf{Q}})$. If $\beta({\bf{T}})$ is contained in ${\bf{Q}}_p$, 
then the Hecke algebra ${\bf{T}}$ acts on the summand $J(K[p^n])_{\beta}$ via the homomorphism 
$\beta$. Our fixed newform $f \in S_2(\Gamma_0(N))$ determines such a homomorphism, 
which we shall also denote by $f$ in an abuse of notation. We shall then write $h_{n, f}$ 
to denote the image of a Heegner point $h_n \in J_0(N)(K[p^n])$ in the associated summand 
$J(K[p^n])_f$. Let $\alpha_p$ denote the $p$-adic unit root of the Hecke polynomial $X^2 -a_p(f)X+p$, 
where $a_p(f)$ denotes the $T_p$-eigenvalue of $f$. Using the language of \cite{BD96}, 
we define for each integer $n\geq 0$ a sequence of 
{\it{regularized Heegner points}}: \begin{align*} z_n &= 
\frac{1}{\alpha_p^n} h_{n, f} - \frac{1}{\alpha_p^{n-1}} h_{n-1, f} \end{align*} 
if $n \geq 1$, otherwise \begin{align*} z_0 &=  \frac{1}{u} \cdot 
\begin{cases} \left(1 - \frac{\sigma_p}{\sigma_p^*} \right)
\left(1- \frac{\sigma_p^*}{\sigma_p} \right) h_{0,f} &\text{if $\eta(p)=1$} 
\\  \left(1 - \frac{1}{\alpha_p^2} \right) h_{0,f} &\text{if $\eta(p)=-1.$} 
\end{cases}\end{align*} Here, $u = \frac{1}{2}\vert \mathcal{O}_K^{\times}\vert$,
and $\sigma_p$ and $\sigma_p^*$ denote the Frobenius automorphisms in 
the Galois group $\Gal(K[1]/K) \approx \Pic(\mathcal{O}_K)$ of the Hilbert class 
field $K[1]$ over $K$ at the two primes above $p$ in the case where $\eta(p)=1$. 
It can be deduced from the Euler system relations shown in Howard 
\cite{Ho} that these regularized Heegner points are compatible under 
norm and trace maps on the summand $J(K[p^n])_f$. We refer the 
reader to \cite[$\S$ 1.2]{Ho} for details. 

The following main result of Howard \cite{Ho} was first proved by 
Perrin-Riou \cite{PR87} for the case of $n=0$. To state this result, 
let $\mathfrak{K}_n$ denote the Artin symbol of 
$\mathfrak{d}_n = (\sqrt{D}\mathcal{O}_K) \cap \mathcal{O}_{p^n}$. 
Let $\langle~,~\rangle_{E, K[p^n]}$ denote the canonical $p$-adic height 
pairing \begin{align*} \langle ~ , ~\rangle_{E, K[p^n]}: E^t(K[p^n]) \times E(K[p^n]) 
\longrightarrow {\bf{Q}}_p, \end{align*} as defined in \cite[(9), $\S$ 3.3]{Ho}. 
Note that this pairing is canonical as a consequence 
of the fact that $E$ is ordinary at $p$, along with the uniqueness claims of 
\cite[Proposition 3.2.1]{Ho}. The reader will also note that we do not extend 
this pairing ${\bf{Q}}_p$-linearly as done throughout \cite{Ho}, 
to obtain the results stated here after taking tensor products 
$\otimes {\bf{Q}}_p$. Indeed, such extensions are not necessary 
as $\mathcal{L}_f$ is integral by \cite[Theorem 2.9]{VO3}, and hence 
belongs to the Iwasawa algebra ${\bf{Z}}_p[[\mathcal{G}]]$ (as opposed 
to just ${\bf{Z}}_p[[\mathcal{G}]] \otimes_{{\bf{Z}}_p} {\bf{Q}}_p$). 
Let $\log_p$ denote the $p$-adic logarithm, composed with a fixed isomorphism 
$\Gamma \cong {\bf{Z}}_p^{\times}$. Let us also write $z_n^t$ denote the image 
of the regularized Heegner point $z_n$ under the canonical principal polarization 
$J_0(N) \cong J_0(N)^t$. We deduce from \cite[Theorem A]{Ho} the following result.

\begin{theorem}[Howard]\label{HAI} Assume that $D$ is odd and not equal to $-3$. 
Assume as well that $\eta(p)=1$. Then, for any dihedral character 
$\rho: \Gal(K[p^n]/K) \longrightarrow \overline{\bf{Q}}_p^{\times}$, we have the 
identity \begin{align}\label{HA} \rho(\mathfrak{K}_n) \cdot \log_p(\gamma) \cdot \rho(\mathcal{L}_{f,1}) 
&= \sum_{\sigma \in \Gal(K[p^n]/K)} \rho(\sigma) \langle z_n^t, z_n^{\sigma} \rangle_{E, K[p^n]}. \end{align}
This identity $(\ref{HA})$ is independent of the choice of topological generator $\gamma \in \Gamma$. 
\end{theorem}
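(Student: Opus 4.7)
The plan is to deduce this identity as a specialization of Howard's $\Lambda$-adic Gross-Zagier formula \cite[Theorem A]{Ho}, taking advantage of the integrality of $\mathcal{L}_f$ proved in \cite[Theorem 2.9]{VO3} so as to work in the integral Iwasawa algebra ${\bf{Z}}_p[[\mathcal{G}]]$ rather than in the extended algebra ${\bf{Z}}_p[[\mathcal{G}]]\otimes_{{\bf{Z}}_p}{\bf{Q}}_p$ used throughout \cite{Ho}.

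First, I would recall that Howard's Theorem A asserts, at the level of the Iwasawa algebra ${\bf{Z}}_p[[G[p^{\infty}]]]$, an identity between $\mathfrak{K} \cdot \log_p(\gamma) \cdot \mathcal{L}_{f,1}$ and the element obtained from the Perrin-Riou $\Lambda$-adic height pairing $\mathfrak{h}_{\infty}$ applied to the compatible sequence of regularized Heegner points $\lbrace z_n \rbrace_n$ built from the Bertolini-Darmon formulae above. At each finite level, this element restricts to $\sum_{\sigma \in G[p^n]} \langle z_n^t, z_n^{\sigma}\rangle_{A, K[p^n]} \cdot \sigma$, living in ${\bf{Z}}_p[G[p^n]]$. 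The three standing hypotheses $D$ odd, $D \neq -3$, and $\chi(p)=1$ are precisely the hypotheses under which Howard's construction of the Heegner class and the relevant uniqueness of the canonical $p$-adic height pairing (via \cite[Proposition 3.2.1]{Ho}) go through without modification, and under which the ``split'' regularization of $z_0$ recorded above is the correct one.

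Next, I would specialize this $\Lambda$-adic identity at the finite-order character $\rho: G[p^n] \longrightarrow \overline{\bf{Q}}_p^{\times}$, extended in the canonical way to a character of $G[p^{\infty}]$ via inflation. The left-hand side becomes $\rho(\mathfrak{K}_n)\cdot \log_p(\gamma)\cdot \rho(\mathcal{L}_{f,1})$ (noting that $\rho(\mathfrak{K})=\rho(\mathfrak{K}_n)$, since $\mathfrak{K}_n$ is the image of $\mathfrak{K}$ in $G[p^n]$), while the right-hand side becomes exactly the character sum $\sum_{\sigma \in G[p^n]} \rho(\sigma)\langle z_n^t, z_n^{\sigma}\rangle_{A, K[p^n]}$. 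The main obstacle I anticipate is to verify carefully that Howard's formula, which is a priori formulated over ${\bf{Q}}_p$, descends to an identity in ${\bf{Z}}_p[[G[p^{\infty}]]]$; this is where the integrality result \cite[Theorem 2.9]{VO3} is essential, and it must be checked that the Heegner-side term stays in the integral algebra as well, so that the specialization at $\rho$ yields the stated identity without a denominator.

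Finally, for independence of $\gamma$, I would write any other topological generator as $\gamma' = \gamma^u$ with $u \in {\bf{Z}}_p^{\times}$, so $\log_p(\gamma') = u\log_p(\gamma)$. Expanding $\gamma' - 1 = u(\gamma-1) + O((\gamma-1)^2)$ and inverting shows that the coefficient $\mathcal{L}_{f,1}'$ of $(\gamma'-1)$ in the expansion \eqref{exp} transforms as $\mathcal{L}_{f,1}' = u^{-1}\mathcal{L}_{f,1}$. The product $\log_p(\gamma)\cdot \mathcal{L}_{f,1}$ is therefore invariant, and hence so is both sides of the identity.
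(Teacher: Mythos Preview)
Your proposal reverses the logical direction of both Howard's paper and this one. In \cite{Ho}, Theorem~A is precisely the finite-level, character-by-character identity: for each ring class character $\rho$ of $G[p^n]$ it gives the relation between $\rho(\mathcal{L}_{f,1})$ and the $\rho$-isotypic height pairing of the regularized Heegner points. Theorem~B of \cite{Ho} is the $\Lambda$-adic statement, and Howard obtains it \emph{from} Theorem~A by passing to the limit over $n$. Accordingly, the present paper offers no proof of Theorem~\ref{HAI} beyond the sentence ``We deduce from Howard \cite[Theorem A]{Ho} the following result'': the statement is a restatement of Howard's Theorem~A, with the sole refinement that one may work in ${\bf{Z}}_p[[\mathcal{G}]]$ rather than ${\bf{Z}}_p[[\mathcal{G}]]\otimes{\bf{Q}}_p$ thanks to the integrality of $\mathcal{L}_f$ from \cite[Theorem 2.9]{VO3}. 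The paper then deduces the $\Lambda$-adic Theorem~\ref{HBI} from Theorem~\ref{HAI}, not the other way around.

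Your plan --- take the $\Lambda$-adic identity and specialize at $\rho$ --- is not mathematically wrong, but it is circular relative to the source you cite: you would be invoking a consequence of the very statement you are trying to prove. If you insist on arguing in that direction, the correct citation is Howard's Theorem~B (or Theorem~\ref{HBI} here), not Theorem~A, and you would need the norm-compatibility of the $p$-adic height pairing to justify that the projection of the $\Lambda$-adic Heegner element to level $n$ recovers $\sum_{\sigma}\langle z_n^t, z_n^{\sigma}\rangle\cdot\sigma$. Your remarks on integrality and on independence of $\gamma$ are correct and match the paper's treatment.
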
 

This result has the following implications for our modular elliptic curve $E$, which belongs 
the isogeny class of ordinary elliptic curves associated to the newform $f$. Fix a modular 
parametrization $\varphi: X_0(N) \rightarrow E$. Let $\varphi_*: J_0(N) \rightarrow E$ 
denote the induced Albanese map, and $\varphi^*:E  \rightarrow J_0(N)$ the induced 
Picard map. Let \begin{align*} x_n &= \varphi_*(z_n) \in E(K[p^n])\end{align*} 
denote the image of the regularized Heegner point $z_n$ under the Albanese map 
$\varphi_*$, and let $x_n^t$ denote the unique point of $E(K[p^n])$ for which 
$\varphi^*(x_n^t) = z_n^t$. Hence, we can identify $x_n$ with $\deg(\varphi) x_n^t$. 
Note that the points $x_n$ and $x_n^t$ are norm compatible 
as $n$ varies, thanks to the norm compatibility of the regularized Heegner points $z_n$ and 
$z_n^t$. Granted this property, we define the {\it{Heegner $p$-adic $L$-function}} 
$\mathcal{L}_{\Heeg}$ in  ${\bf{Z}}_p[[\varOmega]]$ by 
\begin{align*} \mathcal{L}_{\Heeg} &= \varprojlim_n \sum_{\sigma \in \Gal(K[p^n]/K)} 
\langle x_n^t, x_n^{\sigma} \rangle_{E, K[p^n]} \cdot \sigma. \end{align*} We deduce 
from Theorem \ref{HAI} the following integral version of \cite[Theorem B]{Ho}.

\begin{theorem}\label{HBI} Keep the setup and hypotheses of Theorem \ref{HAI} above. 
Let us write $\mathfrak{K} = \varprojlim_n \mathfrak{K}_n \in \varOmega$. Then, as ideals 
in the Iwasawa algebra ${\bf{Z}}_p[[\varOmega]]$, we have \begin{align*} \mathfrak{K} \cdot 
\log_p(\gamma) \cdot \mathcal{L}_{f,1} &= \mathcal{L}_{\Heeg}. \end{align*} \end{theorem}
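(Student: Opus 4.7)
The plan is to upgrade the character-by-character identity of Theorem \ref{HAI} to an identity of elements in the Iwasawa algebra ${\bf{Z}}_p[[G[p^{\infty}]]]$, and then to transfer Heegner cycles from the Jacobian $J_0(N)$ to the elliptic curve $A$ via the modular parametrization $\varphi$. First I would fix $n \geq 0$ and observe that finite-order characters $\rho: G[p^n] \to \overline{\bf{Q}}_p^{\times}$ separate elements of ${\bf{Z}}_p[G[p^n]] \otimes_{{\bf{Z}}_p} \overline{\bf{Q}}_p$. Applying each such $\rho$ to both sides of the proposed level-$n$ identity reduces exactly to the identity $(\ref{HA})$ of Theorem \ref{HAI}, so we obtain
\begin{align*}
\mathfrak{K}_n \cdot \log_p(\gamma) \cdot \overline{\mathcal{L}}_{f,1}^{(n)} &= \sum_{\sigma \in G[p^n]} \langle z_n^t, z_n^{\sigma} \rangle_{A, K[p^n]} \cdot \sigma
\end{align*}
in ${\bf{Z}}_p[G[p^n]]$, where $\overline{\mathcal{L}}_{f,1}^{(n)}$ denotes the image of $\mathcal{L}_{f,1}$ under the natural projection ${\bf{Z}}_p[[G[p^{\infty}]]] \to {\bf{Z}}_p[G[p^n]]$. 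Integrality of both sides is inherited from the integrality of $\mathcal{L}_f$ proved in \cite[Theorem 2.9]{VO3}.

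Next I would transfer the right-hand side from $J_0(N)$ to $A$ using functoriality of the canonical $p$-adic height pairing. The modular parametrization induces the Albanese map $\varphi_*: J_0(N) \to A$ and the Picard map $\varphi^*: A^t \to J_0(N)$, which are adjoint with respect to the canonical heights on $A$ and on the $f$-isotypic summand of $J_0(N)$. Since $x_n = \varphi_*(z_n)$ and $\varphi^*(x_n^t) = z_n^t$ by construction, this adjunction yields, for each $\sigma \in G[p^n]$,
\begin{align*}
\langle z_n^t, z_n^{\sigma} \rangle_{A, K[p^n]} = \langle \varphi^*(x_n^t), z_n^{\sigma} \rangle_{J_0(N), K[p^n]} = \langle x_n^t, \varphi_*(z_n^{\sigma}) \rangle_{A, K[p^n]} = \langle x_n^t, x_n^{\sigma} \rangle_{A, K[p^n]},
\end{align*}
so the sum over $\sigma$ on the right-hand side can be rewritten with $x_n^t$ and $x_n^{\sigma}$ in place of $z_n^t$ and $z_n^{\sigma}$.

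Finally, I would pass to the inverse limit as $n \to \infty$. By definition $\mathfrak{K} = \varprojlim_n \mathfrak{K}_n$, the projections $\overline{\mathcal{L}}_{f,1}^{(n)}$ converge to $\mathcal{L}_{f,1}$ in ${\bf{Z}}_p[[G[p^{\infty}]]]$ by continuity of the quotient maps, and the norm compatibility of the points $x_n$, $x_n^t$ (inherited from the norm compatibility of the regularized Heegner points $z_n$, $z_n^t$ via the Euler system relations of \cite[$\S 1.2$]{Ho}) ensures that the Galois-twisted sums $\sum_{\sigma \in G[p^n]} \langle x_n^t, x_n^{\sigma} \rangle_{A, K[p^n]} \cdot \sigma$ form a compatible system converging precisely to $\mathcal{L}_{\Heeg}$. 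Taking the inverse limit then produces the asserted equality of ideals in ${\bf{Z}}_p[[G[p^{\infty}]]]$; independence of the choice of $\gamma \in \Gamma$ is inherited from the corresponding statement in Theorem \ref{HAI}. The principal obstacle is the functoriality step: one must verify that the canonical $p$-adic height pairing appearing on the $J_0(N)$-side of Theorem \ref{HAI} is exactly the one pulled back from $A$, using the uniqueness of the ordinary $p$-adic height pairing (cf.\ \cite[Proposition 3.2.1]{Ho}) together with the compatibility of the principal polarizations on $A$ and $J_0(N)$ under $\varphi$. Once this identification is in place, the remaining steps are formal consequences of character theory for the finite groups $G[p^n]$ and passage to the inverse limit.
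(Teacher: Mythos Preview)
Your proposal is correct and matches the paper's approach. The paper does not spell out a proof but simply states that Theorem \ref{HBI} is ``an integral version of Howard \cite[Theorem B]{Ho}'' deduced from Theorem \ref{HAI}; your three steps (character separation at each finite level, functoriality of the $p$-adic height under the Albanese and Picard maps associated to $\varphi$, and passage to the inverse limit) are precisely Howard's deduction of his Theorem~B from his Theorem~A, lifted to the integral setting via \cite[Theorem 2.9]{VO3}.
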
 
This result in turn has applications to the $\Lambda$-adic Gross-Zagier theorem of Mazur-Rubin \cite[Conjecture 9]{MR}, 
where $\Lambda$ denotes the Iwasawa algebra ${\bf{Z}}_p[[\Omega]]$. Following \cite[$\S$ 0]{Ho}, we define a pairing 
\begin{align*} \langle ~,~\rangle_{E, K[p^n]}^{\Gamma}: E(K[p^n])\times E(K[p^n]) &\longrightarrow \Gamma \end{align*} 
by the implicit relation $\langle ~,~\rangle_{E, K[p^n]} = \log_p \circ \langle ~,~\rangle_{E, K[p^n]}^{\Gamma}$. 
Here, we should mention that the construction of $\langle ~,~\rangle_{E, K[p^n]}$ given in \cite[$\S$ 3.3]{Ho} 
requires as input an auxiliary (idele class) character 
$$\begin{CD}\rho_{K[p^n]}: {\bf{A}}_{K[p^n]}^{\times}/K[p^n]^{\times} @>>> \Gamma @>{\log_p}>> {\bf{Z}}_p. \end{CD}$$ 
Anyhow, we then define from this a completed group ring element 
\begin{align*} \mathcal{L}_{\Heeg}^{\Gamma} &= \varprojlim_n \sum_{\sigma \in \Gal(K[p^n]/K)} \langle x_n, x_n^{\sigma} \rangle_{E, K[p^n]}^{\Gamma} 
\cdot \sigma \in {\bf{Z}}_p[[\Omega]] \otimes \Gamma. \end{align*} Let $I$ denote the kernel of the natural projection 
\begin{align*} {\bf{Z}}_p[[ \Omega \times \Gamma]] &\longrightarrow {\bf{Z}}_p[[\Omega]]. \end{align*} Let $\vartheta$ denote the isomorphism 
\begin{align*} \vartheta: {\bf{Z}}_p[[\Omega]] \otimes \Gamma &\longrightarrow  I/I^2 \\ (\lambda \otimes \gamma) &\longmapsto  \lambda(\gamma-1). \end{align*} 
As explained in \cite{Ho}, we can derive from this the formula \begin{align*} \vartheta \left( \mathcal{L}_{\Heeg}^{\Gamma}\right) 
&= \frac{\deg(\varphi)}{\log_p(\gamma)} \cdot \mathcal{L}_{\Heeg}(\gamma-1).\end{align*} 
Now, observe that since $\mathcal{L}_{f, 0}=0$ with our hypothesis (i.e. Hypothesis \ref{hyp} (ii)), 
the two-variable $p$-adic $L$-function $\mathcal{L}_f$ is contained in the kernel $I$. Hence, 
Theorem \ref{HBI} gives us the following identifications in the quotient $I/I^2$: 
\begin{align}\label{I/I^2} \mathcal{L}_f &= \mathcal{L}_{f, 1}(\gamma-1) 
= \frac{\mathfrak{L}_{\Heeg}}{\mathfrak{K} \cdot \log_p(\gamma)} \cdot (\gamma -1) 
= \frac{\vartheta \left( \mathcal{L}_{\Heeg}^{\Gamma}\right)}
{\mathfrak{K} \cdot \deg(\varphi)} .\end{align} Let us now keep the hypotheses of Theorem \ref{HAI} in force. 
We assume additionally that the absolute Galois group $G_K = \Gal(\overline{K}/K)$ 
of $K$ surjects onto the ${\bf{Z}}_p$-automorphisms of the $p$-adic Tate module $T_p(E)$ 
of $E$, and that $p$ does not divide the class number of $K$. 
Let us now write $\widetilde{x}_{\infty} \in \mathfrak{S}(E/K_{p^{\infty}})$ 
to denote the projective limit of the sequence of norm compatible elements 
\begin{align*} \widetilde{x}_n &= \Norm_{K[p^{n+1}]/K_{p^n}}(x_{n+1}) \in 
\mathfrak{S}(E/K_{p^n}). \end{align*} We again write $\mathfrak{H}_{\infty} 
= \mathfrak{H}_{\infty}(E/K_{p^{\infty}})$ to denote the Heegner or CM submodule 
of the compactified Selmer group $\mathfrak{S}_{\infty} = \mathfrak{S}(E/K_{p^{\infty}})$ 
generated by this $\widetilde{x}_{\infty}$. By the main theorem of Cornut \cite{Cor}
(see also \cite{CV}), $\mathfrak{H}_{\infty}$ has rank one as 
a $\Lambda$-module. By work of Howard \cite{Ho0} (see also 
\cite{Ho}) and Bertolini \cite{Be}, we also know (i) that  
$X(E/K_{p^{\infty}})$ is a rank one $\Lambda$-module, (ii) that 
$\mathfrak{S}_{\infty}= \mathfrak{S}(E/K_{p^{\infty}})$ is free of 
rank one over $\Lambda$ (whence the quotient 
$\mathfrak{S}_{\infty}/\mathfrak{H}_{\infty}$ is a torsion 
$\Lambda$-module), and (iii) that the following divisibility 
of ideals holds in $\Lambda$: \begin{align}\label{-1div}  
\operatorname{char}X(E/K_{p^{\infty}})_{\tors}  \mid  
\operatorname{char} (\mathfrak{S}_{\infty}/\mathfrak{H}_{\infty}) \cdot 
\operatorname{char}(\mathfrak{S}_{\infty}/\mathfrak{H}_{\infty})^* . \end{align} 
Here again, we have written $X(E/K_{p^{\infty}})_{\tors}$ to denote the 
$\Lambda$-torsion submodule of $X(E/K_{p^{\infty}})$. Let us now 
consider the following result of Perrin-Riou \cite{PR87}, as 
described in \cite[Proposition 0.0.4]{Ho}.

\begin{theorem}[Perrin-Riou]\label{IPR} There exists a $p$-adic height 
pairing \begin{align}\label{PRP} \mathfrak{h}_n: \mathfrak{S}(E/K_{p^{n}}) 
\times \mathfrak{S}(E/K_{p^{n}}) &\longrightarrow c^{-1}{\bf{Z}}_p 
\end{align} whose restriction to the image of the Kummer map 
$E(K[p^n]) \otimes {\bf{Z}}_p \rightarrow \mathfrak{S}(E/K[p^n])$ 
coincides with the pairing $\langle~,~\rangle_{E, K[p^n]}$. Here,
 $c \in {\bf{Z}}_p$ is some $p$-adic integer that does not depend 
 on choice of $n$. \end{theorem} Using this pairing $(\ref{PRP})$, 
 we can then define a pairing \begin{align*} \mathfrak{h}_{\infty}: 
 \mathfrak{S}_{\infty} \times \mathfrak{S}_{\infty} &\longrightarrow 
 c^{-1}{\bf{Z}}_p[[\Omega]] \\ (\varprojlim_n a_n, \varprojlim_n b_n) 
 &\longmapsto \varprojlim_n \sum_{\sigma \in \Gal(K_{p^n}/K) } 
 \mathfrak{h}_n (a_n, b_n^{\sigma}) \cdot \sigma. \end{align*} 
 We again define the $p$-adic regulator $\mathcal{R} = \mathcal{R}(E/K_{p^{\infty}})$ 
 of $E$ over $K_{p^{\infty}}$ to be the image in $c^{-1}{\bf{Z}}_p[[\Omega]]]$ 
 of this pairing $\mathfrak{h}_{\infty}$. Let ${\bf{e}}$ denote the natural 
projection \begin{align*} {\bf{e}}: {\bf{Z}}_p [[\varOmega]]
\longrightarrow {\bf{Z}}_p[[\Omega]]. \end{align*}  Following Howard 
\cite{Ho} (using \cite[Remark 3.2.2]{Ho} with the uniqueness 
claims of \cite[Proposition 3.2.1]{Ho} and the fact that $E$ is ordinary 
at $p$), we deduce that the height pairing $\mathfrak{h}_{\infty}$ is 
norm compatible. This allows us to deduce that 
\begin{align}\label{crux} {\bf{e}}\left( \mathcal{L}_{\Heeg}\right) 
&= \mathfrak{h}_{\infty}(\widetilde{x}_{\infty}, \widetilde{x}_{\infty}) =
\operatorname{char}\left( \mathfrak{S}_{\infty}/H_{\infty}\right) \cdot 
\operatorname{char}\left( \mathfrak{S}_{\infty}/H_{\infty}\right)^* 
\mathcal{R} \end{align} as ideals in $c^{-1}{\bf{Z}}_p[[\Omega]]$.
On the other hand, we obtain from $(\ref{I/I^2})$ 
the identifications \begin{align*} {\bf{e}}(\mathcal{L}_{f,1}) 
&= \frac{\mathfrak{h}_{\infty}(\widetilde{x}_{\infty}, 
\widetilde{x}_{\infty})}{{\bf{e}}(\mathfrak{K}\cdot \log_p(\gamma))} 
= \frac{\operatorname{char}\left( \mathfrak{S}_{\infty}/H_{\infty}\right) 
\cdot \operatorname{char}\left( \mathfrak{S}_{\infty}/H_{\infty}\right)^* 
\mathcal{R}}{{\bf{e}}(\mathfrak{K} \cdot \log_p(\gamma))} \end{align*} 
as ideals in $\Lambda = {\bf{Z}}_p[[\Omega]]$. Putting this all together,
we obtain the following result.

\begin{corollary}\label{EC-1} Assume that $D$ is odd and not equal to $-3$. 
Assume additionally that the absolute Galois group $G_K = \Gal(\overline{K}/K)$ 
surjects onto the ${\bf{Z}}_p$-automorphisms of $T_p(E)$, that $\eta(p)=1$, and 
that $p$ does not divide the class number of $K$. Then, we have the following 
formal relations: \begin{align*} \chi(G, \Sel_{p^{\infty}}(A/K_{p^{\infty}})_{\cotors}) 
&\geq  \vert \operatorname{char}(\mathfrak{S}_{\infty}/\mathfrak{H}_{\infty})(0) 
\cdot \operatorname{char}(\mathfrak{S}_{\infty}/\mathfrak{H}_{\infty})^*(0) \vert_p^{-1} \\
&\geq  \vert \operatorname{char}(\mathfrak{S}_{\infty}/\mathfrak{H}_{\infty})(0) 
\cdot \operatorname{char}(\mathfrak{S}_{\infty}/\mathfrak{H}_{\infty})^*(0) 
\mathcal{R}(0) \vert_p^{-1} \\ &= \vert {\bf{e}}(\mathcal{L}_{f,1}(0)) \vert_p^{-1} \\ 
&= \vert \mathfrak{h}_{\infty}(\widetilde{x}_{\infty}, \widetilde{x}_{\infty})(0) \vert_p^{-1} \end{align*} \end{corollary} 

\begin{proof} Note that ${\bf{e}}(\mathfrak{K} \cdot \log_p(\gamma))$ is a unit, and 
so we can ignore contributions from the obvious ${\bf{e}}(\mathfrak{K} \cdot \log_p(\gamma))^{-1} $
 terms. The first inequality follows from the main conjecture divisibility shown in \cite{Ho2}.
The second inequality is trivial, at least granted the conjectural nontriviality of $\mathcal{R}$. 
The third and fourth equalities follow from the Iwasawa theoretic Gross-Zagier theorem shown 
in \cite{Ho}, as described above. \end{proof}

Note as well that this result is formal, as we have not assumed the $G$-Euler characteristic 
to be well-defined, i.e. as we have not assumed that the $p$-adic regulator $\mathcal{R}$ is
nondegenerate. We therefore end this section with the following speculation.

\begin{conjecture} Assume the conditions of Hypothesis \ref{hyp} are true. 
Recall that we let $\Lambda$ denote the Iwasawa algebra 
$\Lambda(G) = {\bf{Z}}_p[[G]]$. Then, (i) the $G$-Euler characteristic 
$\chi(G, \Sel_{p^{\infty}}(A/K_{p^{\infty}}))$ is well-defined, and given by 
the formula \begin{align*}  \chi(G, \Sel_{p^{\infty}}(A/K_{p^{\infty}})_{\cotors}) &= 
 \vert \operatorname{char}(\mathfrak{S}_{\infty}/\mathfrak{H}_{\infty})(0) 
\cdot \operatorname{char}(\mathfrak{S}_{\infty}/\mathfrak{H}_{\infty})^*(0) 
\mathcal{R}(0) \vert_p^{-1}\\  &= \vert {\bf{e}}(\mathcal{L}_{f,1}(0)) \vert_p^{-1} \\
&= \vert \mathfrak{h}_{\infty}(\widetilde{x}_{\infty}, \widetilde{x}_{\infty})(0) \vert_p^{-1} 
\end{align*} Moreover, (ii) the image in the Iwasawa algebra $\Lambda$ of  the two-variable 
$p$-adic $L$-function $\mathcal{L}_f = \mathcal{L}_{f,1}(\gamma -1)$ is not identically zero, 
(iii) the $\Lambda$-adic height pairing $\mathfrak{h}_{\infty}$ of Perrin-Riou is nondegenerate, 
and (iv) the $p$-adic regulator $\mathcal{R} = \mathcal{R}(E/K_{p^{\infty}})$ of $E$ over $K_{p^{\infty}}$ 
is nondegenerate, and satisfies the relation $\mathcal{R}={\bf{e}}(\mathfrak{K} \cdot \log_p(\gamma))^{-1} 
\Lambda$. \end{conjecture} 

\begin{remark} Note that the nonvanishing condition for $\mathcal{L}_f$ presented in 
part (i) of this conjecture does not follow a priori from the nonvanishing theorems of Cornut \cite{Cor} or 
Cornut-Vatsal \cite{CV}. Those results show here that the complex values $L'(E/K, \rho, 1)$ do not vanish, 
where $\rho$ a ring class character of $G[p^{\infty}]$ of sufficiently large conductor. In particular, what is 
required is a strengthening of their result to include twists by characters of the cyclotomic Galois group 
$\Gamma = \Gal(K(\mu_{p^{\infty}})/K)$. \end{remark}

\end{remark}

\begin{remark}[Acknowledgements.] The author is grateful to John Coates, Christophe Cornut, Olivier Fouquet, 
Ben Howard, and Dimitar Jetchev for useful discussions, as well as to an anonymous referee for pointing out
a bad error in a earlier version of this work (which has since been corrected). \end{remark}

\end{document}